\documentclass[11pt, final]{amsart}
\usepackage{amsmath, amsxtra, amssymb, mathdots, amsthm}
\usepackage{appendix}
\usepackage{verbatim}
\usepackage{multirow}

%---------------------------------

\usepackage[margin=1.45in]{geometry}
%%Margin's size
\linespread{1.03}   
            
%Article includes graphics
\usepackage{graphicx}
%Article includes commutative diagrams, ...
\usepackage[cmtip,all]{xy}

%Labels are shown
\usepackage[notcite, notref]{showkeys}
%References and citations are hyperlinked 
\usepackage[colorlinks, linkcolor=blue, citecolor=red, urlcolor=black]{hyperref}

\theoremstyle{plain}
\newtheorem*{main-theorem}{Main Theorem}

\newtheorem{theorem}[equation]{Theorem}
\newtheorem{theorema}{Theorem}
 % "letter-numbered" theorems

\newtheorem{prop}[equation]{Proposition}
\newtheorem{corollary}[equation]{Corollary}

\newtheorem{claim}[equation]{Claim}
\newtheorem*{claim*}{Claim}

\newtheorem{lemma}[equation]{Lemma}

\theoremstyle{definition}
\newtheorem{definition}[equation]{Definition}
\newtheorem{example}[equation]{Example}
\newtheorem{remark}[equation]{Remark}

\newtheorem{assumption}[equation]{Assumption}

\numberwithin{equation}{section}

\DeclareMathOperator{\spec}{Spec}

\DeclareMathOperator{\Sym}{Sym}

\DeclareMathOperator{\Stab}{Stab}

\DeclareMathOperator{\SL}{SL}

\DeclareMathOperator{\Grass}{Grass}

\DeclareMathOperator{\card}{card}
\DeclareMathOperator{\Res}{Res}

\def\D{\mathcal{D}}

\def\VV{\mathbb{V}}
\def\AA{\mathbf{A}}

\newcommand{\gitq}{/\hspace{-0.25pc}/}

 %macro to use in f\co \rightarrow Y
\def\QQ{\mathbb{Q}}
\def\RR{\mathbb{R}}
\def\PP{\mathbb{P}}

\def\ZZ{\mathbb{Z}}
\def\CC{\mathbb{C}}
\def\GG{\mathbb{G}}

\DeclareMathOperator{\chark}{char}

\makeatletter
\def\blfootnote{\xdef\@thefnmark{}\@footnotetext}
\makeatother

\address[Fedorchuk]{Department of Mathematics\\
Boston College\\
140 Commonwealth Ave\\
Chestnut Hill, MA 02467, USA}
\email{maksym.fedorchuk@bc.edu}

\begin{document}

\title[Decomposability of polynomials and associated forms]{Direct sum decomposability of polynomials and 
factorization of associated forms}

\author{Maksym Fedorchuk}

\begin{abstract} We prove two criteria for direct sum decomposability of 
homogeneous polynomials. 
For a homogeneous polynomial with a non-zero discriminant, we
interpret direct sum decomposability of the polynomial in terms
of factorization properties of the Macaulay inverse system of its Milnor algebra.
This leads to an if-and-only-if criterion for direct sum decomposability of such 
a polynomial, and 
to an algorithm for computing direct sum decompositions over any field, 
either of characteristic $0$ or of sufficiently large positive characteristic, 
for which polynomial factorization algorithms exist. 

For homogeneous forms over algebraically closed fields, we interpret
direct sums and their limits as forms that cannot be reconstructed from their
Jacobian ideal. 

We also give 
simple necessary criteria for direct sum decomposability of 
arbitrary homogeneous polynomials over arbitrary fields 
and apply them to prove that many interesting classes of homogeneous
polynomials are not direct sums.
\end{abstract}

\maketitle

\section{Introduction}

A homogeneous polynomial $f$ is called a \emph{direct sum} 
if, after a change of variables, it can be 
written as a sum of two or more polynomials in 
disjoint sets of variables: 
\begin{equation}\label{E:ds}
f=f_1(x_1,\dots, x_a)+f_2(x_{a+1},\dots, x_n).
\end{equation}
Homogeneous direct sums are the subject 
of a well-known symmetric Strassen's additivity conjecture
postulating that the Waring rank of $f$ in \eqref{E:ds} is the sum of the Waring ranks of $f_1$ and $f_2$
(see, for example, \cite{teitler}). Direct sums also play a special role in the study of GIT stability 
of associated forms \cite{fedorchuk-isaev}.

The innocuous definition of a direct sum raises several natural questions:  How do we determine whether
a given polynomial is a direct sum? For example, is 
{
\begin{multline*}
f={x}_{1}^{3}+3 {x}_{1}^{2} {x}_{2}+3 {x}_{1} {x}_{2}^{2}+2 {x}_{2}^{3}+3
      {x}_{1}^{2} {x}_{3}+6 {x}_{1} {x}_{2} {x}_{3}+4 {x}_{2}^{2} {x}_{3}+3
      {x}_{1} {x}_{3}^{2}+4 {x}_{2} {x}_{3}^{2}+2 {x}_{3}^{3}
      \end{multline*}
      }
a direct sum in $\QQ[x_1,x_2,x_3]$? (See Example \ref{E:intro} for the answer).
Does the locus of direct sums in the space of all homogeneous polynomials of a given 
degree has a geometric meaning? 

In this paper, we answer these questions by considering two natural maps on 
the space of forms\footnote{Hereinafter, we refer 
to any homogeneous polynomial as a form, and we
call a form $f$ in $n$ variables smooth if it defines a smooth hypersurface
in $\PP^{n-1}$; see \S\ref{S:notation} for further terminology and notational conventions.} of a given degree, the gradient morphism $\nabla$
and the associated form morphism $A$, described in more detail later on. Our first main result is a new criterion for recognizing when a \emph{smooth} form
 is a direct sum over a field either of characteristic $0$ or of sufficiently large positive characteristic:
 
 \begin{theorema}[{see Theorem \ref{MT1}}] \label{theorem-A} A smooth form $f$ is a direct sum if and only if its associated form $A(f)$ is a nontrivial product of two factors in disjoint variables.  
\end{theorema}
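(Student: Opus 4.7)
The plan is to interpret both directions through the apolarity dictionary between the Milnor algebra $M_f = R/J_f$ and its Macaulay inverse system. For a smooth degree-$d$ form $f \in R = k[x_1,\ldots,x_n]$, the partials form a regular sequence, so $M_f$ is graded Artinian Gorenstein with socle in degree $n(d-2)$, and $A(f) \in S = k[y_1,\ldots,y_n]_{n(d-2)}$ is by definition the dual socle generator of $M_f$ under the contraction action $R \curvearrowright S$.

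\emph{Forward direction.} If $f = f_1(x_1,\ldots,x_a) + f_2(x_{a+1},\ldots,x_n)$, the partials of $f$ split block by block, and smoothness of $f$ forces smoothness of $f_1$ and $f_2$ in their respective variables. Consequently $J_f = J_{f_1} R + J_{f_2} R$ and $M_f \cong M_{f_1} \otimes_k M_{f_2}$ as graded Artinian Gorenstein algebras in disjoint variables. A direct contraction-pairing computation shows that the dual socle generator of a tensor product is the product of the dual socle generators of the two factors, hence $A(f) = A(f_1)\cdot A(f_2)$ sits in the disjoint sets of $y$-variables dual to the two blocks.

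\emph{Backward direction.} Suppose, after a linear change of coordinates, $A(f) = g_1(y_1,\ldots,y_a)\, g_2(y_{a+1},\ldots,y_n)$ with both $g_i$ nonconstant. Since the annihilator (under contraction) of a product of polynomials in disjoint variables is the sum of the annihilators, $J_f = I_1 R + I_2 R$ with $I_i = \Ann_{R_i}(g_i) \subset R_i$, producing a tensor decomposition $M_f \cong (R_1/I_1) \otimes_k (R_2/I_2)$. Additivity of socle degrees in tensor products pins down $\deg g_1 = a(d-2)$ and $\deg g_2 = (n-a)(d-2)$, and the degree-$(d-1)$ generation of $J_f$ descends to each $I_i$.

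It remains to lift this tensor decomposition of $M_f$ to one of $f$. The idea is to prove that each $R_i/I_i$ is itself the Milnor algebra of a smooth degree-$d$ form $f_i \in R_i$; granting this, the forward direction yields $A(f_1+f_2) = g_1 g_2 = A(f)$, and injectivity of the associated form morphism on smooth forms, a graded Mather--Yau type reconstruction, gives $f = f_1+f_2$. This last step is the main obstacle: not every graded Artinian Gorenstein algebra with ideal generated in a single degree is a Milnor algebra, so one must exploit the fact that the global tensor product \emph{is} one. A concrete route is to combine Euler's identity $d\cdot f = \sum_j x_j \partial_j f$ with the disjoint-variable structure of $J_f$ to argue that, after independent coordinate changes within each block, $\partial_j f \in R_1$ for $j \le a$ and $\partial_j f \in R_2$ for $j > a$, which exhibits $f = f_1 + f_2$ explicitly and identifies the partials of $f_i$ with generators of $I_i$.
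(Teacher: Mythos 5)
Your strategy for the backward direction is genuinely different from the paper's and, in outline, viable. The paper (Proposition \ref{P:key}) never splits the Jacobian ideal abstractly: it first shows $\dim_k\bigl(U\cap(x_1,\dots,x_a)\bigr)=a$ by a socle-degree argument, then builds a candidate balanced direct sum $W$ out of the restrictions of the generators to the two coordinate subspaces, and finally proves $U=W$ by comparing the two ideals in the top degree using the containments $(x_1,\dots,x_a)^{a(d-1)+1}\subset I_U$ and $(x_{a+1},\dots,x_n)^{(n-a)(d-1)+1}\subset I_U$. You instead invoke $\Ann_R(g_1g_2)=\Ann_{R_1}(g_1)R+\Ann_{R_2}(g_2)R$ for a product in disjoint variables, which immediately gives $M_f\cong (R_1/I_1)\otimes_k(R_2/I_2)$ and hence that $\langle\nabla f\rangle=(I_1)_{d-1}\oplus(I_2)_{d-1}$ is a direct sum. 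That identity is correct (both sides are apolar ideals of graded Gorenstein quotients with the same socle degree and the same dual socle generator), but it is precisely where Macaulay duality enters, so you must carry the hypothesis that $\chark(k)$ is zero or larger than the socle degree, which you never state. Your route also yields balancedness for free --- but not by ``additivity of socle degrees,'' which only gives $\deg g_1+\deg g_2=n(d-2)$. The correct argument is a generator count: the minimal generators of $I_1R+I_2R$ are the disjoint union of those of $I_1$ and $I_2$ (K\"unneth on minimal free resolutions), there are $n$ of them, all in degree $d-1$, and Artinianness forces at least $a$ (resp.\ $n-a$) of them in each block; hence each $I_i$ is a complete intersection and the socle degree of $R_i/I_i$ is $a(d-2)$ (resp.\ $(n-a)(d-2)$).

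The genuine gap is in your endgame. You correctly identify that the splitting of $J_f$ must be lifted to a splitting of $f$, but the mechanism you propose --- ``after independent coordinate changes within each block, $\partial_j f\in R_1$ for $j\le a$'' --- does not work: a block-diagonal change of variables only replaces each partial by a linear combination of the partials within its block, and there is no reason such a combination lies in a single block, since a priori each $\partial_j f$ is a sum $u_j+v_j$ with $u_j\in (I_1)_{d-1}\subset R_1$ and $v_j\in (I_2)_{d-1}\subset R_2$. The paper's Lemma \ref{L:gradient} closes this without any coordinate change: writing $\partial_i f=u_i+v_i$ as above, equality of mixed partials gives $\partial_j\partial_i f=\partial_j v_i\in R_2$ and $\partial_i\partial_j f=\partial_i u_j\in R_1$ for $i\le a<j$, so both vanish, and the characteristic assumption then yields $f\in (R_1)_d\oplus(R_2)_d$; Euler's identity is not needed. (Your worry that each $R_i/I_i$ must be shown to be a Milnor algebra is a red herring --- that is never required.) With that replacement and the generator-count argument above, your proof closes, and it is arguably shorter than the paper's, at the cost of invoking the tensor-product formalism for apolar ideals of products in disjoint variables.
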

Our second main result is the characterization of 
the locus of direct sums as the non-injectivity locus of $\nabla$, which, as will be clear, is the locus 
where $\nabla$ has positive fiber dimension.  While the full statement of this result given by Theorem \ref{T:injectivity-direct-sum} 
is too cumbersome to state in the introduction, it is well illustrated by the following:
\begin{theorema}[{see Theorem \ref{T:injectivity-direct-sum}}]\label{theorem-B} Suppose $f$ is a GIT semistable
form over algebraically closed field of characteristic not dividing $\deg(f)!$.
Then $f$ is a direct sum if and only if there exists $g$, which is not
a scalar multiple of $f$, and such that $\nabla f=\nabla g$.   
\end{theorema}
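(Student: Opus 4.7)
The ``only if'' direction is immediate: given a nontrivial direct sum $f = f_1(x_1, \dots, x_a) + f_2(x_{a+1}, \dots, x_n)$, take $g := f_1 - f_2$. Then $g$ is not a scalar multiple of $f$, yet the partials of $g$ and of $f$ span the same subspace $\bigl\langle \partial f_1/\partial x_i \bigr\rangle_{i \le a} \oplus \bigl\langle \partial f_2/\partial x_j \bigr\rangle_{j > a}$ of $S^{d-1}V^*$, so $\nabla f = \nabla g$. For the ``if'' direction, my plan is to organize the analysis around the Jordan decomposition of a matrix that arises naturally from the equality.

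Assume $\nabla f = \nabla g$ with $g \notin k^\times \cdot f$. Semistability of $f$ implies that the partials $\partial f/\partial x_1, \dots, \partial f/\partial x_n$ are linearly independent in $S^{d-1}V^*$: a linear dependence would realize $f$ as a cone, which is unstable via the evident one-parameter subgroup. Consequently there is a unique matrix $A = (a_{ij}) \in M_n(k)$ with $\partial g/\partial x_i = \sum_j a_{ij}\, \partial f/\partial x_j$. Equality of the mixed second partials of $g$ becomes the identity $A H_f = H_f A^T$, where $H_f$ is the Hessian of $f$; a short induction shows this identity is preserved under taking polynomials in $A$, so both the semisimple and nilpotent parts $A_s, A_n$ of the Jordan decomposition $A = A_s + A_n$ over $\overline{k}$ individually satisfy $A_s H_f = H_f A_s^T$ and $A_n H_f = H_f A_n^T$, each defining an integrable tangent direction to the fiber of $\nabla$ at $[f]$.

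If $A_s$ is not a scalar matrix, I would pass to an eigenbasis $y_1, \dots, y_n$ of $V^* \otimes_k \overline{k}$ for $A_s$ with eigenvalues $\lambda_1, \dots, \lambda_n$ not all equal. The relation $A_s H_f = H_f A_s^T$ rewrites as $(\lambda_i - \lambda_j)\, \partial^2 f/\partial y_i \partial y_j = 0$, so mixed second partials of $f$ vanish across distinct eigenspaces. Partitioning $V^* = \bigoplus_\mu W_\mu$ by eigenvalue and applying Euler's identity (valid since $\chark k \nmid d$) writes $f = \sum_\mu f_\mu$ with each $f_\mu \in \Sym^d W_\mu$, a nontrivial direct sum.

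The hard part will be the remaining case $A_s = \lambda I$, i.e., $A = \lambda I + N$ with $N \ne 0$ nilpotent, where the Jordan analysis of the initial $A$ alone yields no direct sum. Here the strategy is to enlarge the pool of available matrices using the full linear structure of the fiber of $\nabla$ at $[f]$ together with semistability: the fiber's tangent space -- the space of integrable matrices modulo scalars -- is a linear subspace stable under taking Jordan parts, and if it were entirely contained in the nilpotent cone, then exponentiating nilpotent parts would produce a nontrivial unipotent subgroup of $GL(V^*)$ preserving the Jacobian ideal of $[f]$, which, together with the reductivity (by Matsushima's theorem) of the stabilizer of a polystable representative of $[f]$, yields a contradiction. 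Hence one can always select an alternative integrable $A'$ with non-scalar semisimple part, reducing to the preceding case. The non-semistable example $f = x^2 y$, whose $\nabla$-fiber contains only matrices of the form $\lambda I + N$ with $N$ nilpotent and for which $f$ admits no direct-sum decomposition, confirms that the semistability hypothesis is essential in excluding this scenario.
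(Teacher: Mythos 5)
Your ``only if'' direction and the semisimple half of the ``if'' direction are sound and essentially reproduce the paper's argument: the paper also writes $\partial g/\partial x_i=\sum_j a_{ij}\,\partial f/\partial x_j$, passes to the Jordan normal form of $A$, and in the case of a diagonal $A$ with two distinct eigenvalues concludes that the mixed second partials across eigenspaces vanish, so $f$ splits (this is the diagonal case of Proposition~\ref{P:benson}). The problem is the remaining case $A=\lambda I+N$ with $N\neq 0$ nilpotent, which is exactly where the real content lies, and your proposed treatment of it has a genuine gap. A matrix $B$ satisfying $BH_f=H_fB^{T}$ is an endomorphism of the $n$-dimensional space $\langle\nabla f\rangle$ spanned by the partials; it is \emph{not} induced by a linear substitution of the variables. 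Consequently $\exp(N)$ acts on $\langle\nabla f\rangle$ (sending $\nabla f$ to $\nabla h$ for various $h$ in the fiber), but it does not give an element of $\GL(V^{\vee})$ stabilizing the Jacobian ideal, so there is no unipotent subgroup of the stabilizer of $\bar f$ or of $J_f$ to feed into Matsushima's theorem. (There is also a secondary issue: semistability of $f$ only gives a polystable point in the orbit closure, not reductivity of $\Stab(\bar f)$ itself.) So the claimed contradiction in the case where every integrable matrix has scalar semisimple part is not established, and the reduction ``one can always select an integrable $A'$ with non-scalar semisimple part'' is unsupported.

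The paper closes this case differently and you would need something of this sort: from the Jordan-block relations one integrates explicitly (equations \eqref{E:2nd-zero}--\eqref{E:mixed} in the proof of Proposition~\ref{P:benson}) to show that $f$ has the very special shape
\[
f=\sum_{i=1}^{\ell}x_i\,\frac{\partial H(x_{\ell+1},\dots,x_{2\ell})}{\partial x_{\ell+i}}+G(x_{\ell+1},\dots,x_n),
\]
i.e.\ $f$ is an LDS form, and then Lemma~\ref{L:lds} shows by a direct Hilbert--Mumford weight computation that any such form is GIT non-semistable, contradicting the hypothesis. In other words, the nilpotent case is not excluded by an abstract reductivity argument about the fiber of $\nabla$; it is excluded by exhibiting a destabilizing one-parameter subgroup for the explicit normal form that the nilpotent block forces on $f$. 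Your example $f=x^2y$ correctly illustrates that the hypothesis is needed, but does not substitute for this step.
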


The problem of finding a direct sum decomposability 
criterion for an arbitrary homogeneous polynomials has been successfully addressed earlier
by Kleppe \cite{kleppe-thesis} over an arbitrary field, 
and Buczy\'nska-Buczy\'nski-Kleppe-Teitler \cite{apolarity} over an algebraically closed field. 
Both works interpret direct sum decomposability of a form 
$f$ in terms of its apolar ideal $f^{\perp}$ (see \S \ref{S:prior-work} for more details).
Kleppe uses the quadratic part of the apolar ideal $f^{\perp}$ to 
define an associative algebra $M(f)$ of finite dimension over the base field.
%($M(f)$ is different from the Milnor algebra $M_f$). 
He
then proves that, over an arbitrary field, direct sum decompositions of $f$ are 
in bijection with complete sets of orthogonal idempotents
of $M(f)$.  Buczy\'nska, Buczy\'nski, Kleppe, and Teitler prove
that for a form $f$ of degree $d$ over an algebraically closed field,
the apolar ideal $f^{\perp}$ has 
a minimal generator in degree $d$ if and only if either $f$ is a direct sum, or $f$ is 
a highly singular polynomial. 
In particular, over an algebraically closed field, \cite{apolarity} gives an effective criterion for 
recognizing when $f$ is a direct sum in terms of the graded Betti numbers of $f^{\perp}$. 
We will show that Theorem \ref{theorem-B} is essentially equivalent to the main result of \cite{apolarity},
thus giving a different proof of this result. 

However, none of the above-mentioned two works seem to give an effective
method for computing a direct sum decomposition when it exists, and the criterion of
\cite{apolarity} cannot be used over non-closed fields (see Example \ref{E:non-closed}).
A key step in the proof of the direct sum criterion in \cite{apolarity} is the
Jordan normal form decomposition of a certain linear operator, which in general requires solving a
characteristic equation. 
Similarly, finding a complete set of orthogonal idempotents requires solving a system of quadratic equations. This makes it challenging to turn \cite{apolarity} or \cite{kleppe-thesis} into an algorithm for finding
direct sum decompositions when they exist.  
%Our criterion gives an algorithm for computing direct sum decompositions 
%and works over non-closed field. 
Although our %if-and-only-if 
criterion given by Theorem \ref{theorem-A} works only for smooth forms, it does so over an arbitrary field either of characteristic $0$ or of 
sufficiently large characteristic, and it leads
to an algorithm for finding direct sum decompositions over any such field for which polynomial factorization
algorithms exist. This algorithm is given in Section \ref{S:algorithm}. 

Recall that to a smooth form $f$ of degree $d+1$ in $n$ variables, 
one can assign a degree $n(d-1)$ form $A(f)$ in $n$ (dual) variables, 
called the \emph{associated form} of $f$
(\cite{alper-isaev-assoc,alper-isaev-assoc-binary,eastwood-isaev1}).
The associated form $A(f)$ is defined as a Macaulay inverse system of the Milnor algebra of $f$ 
\cite{alper-isaev-assoc}, which simply means that the apolar ideal of $A(f)$ coincides with the Jacobian ideal of $f$:
\[
A(f)^{\perp}=(\partial f/\partial x_1, \dots, \partial f/\partial x_n).
\]
Such definition leads to an
observation that for a smooth %direct sum decomposable 
form $f$ that is written as a \emph{sum} of two forms in disjoint
sets of variables, the associated form $A(f)$ 
decomposes as a \emph{product} of two forms in disjoint sets of (dual) variables (\cite[Lemma 2.11]{fedorchuk-isaev}). 
For example, up to a scalar,
\[
A(x_1^{d+1}+\cdots+x_{n}^{d+1})=z_1^{d-1}\cdots z_n^{d-1}.
\]
The main purpose of Theorem \ref{theorem-A} is to prove the converse
statement, and thus establish an if-and-only-if criterion for direct sum decomposability of a smooth form $f$
in terms of the factorization properties of its associated form $A(f)$ (see Theorem \ref{MT1}). % for an extended version):
%\begin{main-theorem} 
%\end{main-theorem} 

In Lemma \ref{L:gradient}, we give a simple necessary condition, valid over an arbitrary field, 
for direct sum decomposability of 
an arbitrary form in terms of its \emph{gradient point}.
It is then applied in Section \ref{S:necessary} to prove that a wide class of homogeneous forms contains no direct sums.
%In Theorem \ref{MT2}, we show that this simple necessary condition is in fact sufficient when 
%a form is GIT stable over an algebraically closed field. 

\subsection{Notation and conventions}
\label{S:notation}
Throughout, $k$ will be a field. Unless stated otherwise, we do not require $k$ to be algebraically 
closed or to be of characteristic $0$. 

If $U$ is a $k$-vector space, and $W$ is a subset of $U$, we denote by $\langle W \rangle$ the $k$-linear span of $W$ in $U$.

If $W$ is a representation of the multiplicative group scheme $\GG_m=\spec k[t,t^{-1}]$, 
then, for every $i\in \ZZ$, we denote by $W_{(i)}$
the weight-space in $W$ of the $\GG_m$-action of weight $i$.  We then have a decomposition 
\[
W=\bigoplus_{i\in \ZZ} W_{(i)}.
\]

We fix an integer $n\geq 2$ and fix a $k$-vector space $V$ of dimension $n$. % $\dim_k V = n$. 
We set $S:=\Sym V$, and $\D:=\Sym V^{\vee}$ to be the symmetric algebras on $V$ 
and $V^{\vee}$, respectively, with the standard grading.  If $x_1,\dots, x_n$ is a basis of $V$,
then we identify $S$ with the usual polynomial ring $k[x_1,\dots, x_n]$. If $z_1,\dots, z_n$
is the dual basis of $V^{\vee}$, then $\D=k[z_1,\dots, z_n]$ is the graded dual of $S$. 
Homogeneous elements of $S$ and $\D$ will be called \emph{forms}. 
If $f\neq 0\in S_{d}$ is a degree $d$ form, then we will denote
by $\bar f$ the corresponding element of the projective space $\PP S_d$. 

For a homogeneous ideal $I\subset S$, we denote by $\VV(I)$ the closed subscheme of 
$\PP V^{\vee}\simeq \PP^{n-1}$ defined by $I$. 
In particular, 
a nonzero form $f\in S_d$ defines a hypersurface $\VV(f)$ in $\PP V^{\vee}$. 
If $k$ is algebraically closed,
the hypersurface $\VV(f)$ determines $\bar f$, and so we will sometimes not distinguish 
between a hypersurface and its equation. 
With this in mind, we say that a form $f\in S_{d+1}$ is \emph{smooth} 
if the hypersurface $\VV(f)\subset \PP^{n-1}$ is smooth over $k$ 
(this is, of course, equivalent to $\VV(f)$ being non-singular 
over the algebraic closure of $k$). The locus of smooth forms
in $\PP S_{d+1}$ will be denoted by $(\PP S_{d+1})_{\Delta}$.

We have a differentiation action of 
$S$ on $\D$, also known as \emph{the polar pairing}. Namely, if $x_1,\dots, x_n$ is a basis of $V$, and $z_1,\dots,z_n$
is the dual basis of $V^{\vee}$, then the pairing 
$
S \times \D \to \D
%S_{d'} \times \D_{d} \to \D_{d-d'} 
$
is given by
\[
g\circ F:=g(\partial /\partial z_1, \dots, \partial /\partial z_n) F(z_1,\dots,z_n), \quad \text{for $g\in S$ and $F\in \D$.}
\]

Given a non-zero form $F\in \D_{d}$, 
 \emph{the apolar ideal of $F$} is defined to be
\begin{equation*}%\label{E:apolar-ideal}
F^{\perp}:=\{g\in S \mid g\circ F=0\}\subset S.
\end{equation*}
We let \emph{the space of essential variables} of $F$ to be
\begin{equation*}%\label{E:essential}
E(F):=\{g\circ F \mid g\in S_{d-1}\} \subset \D_1.
\end{equation*}

If $\chark(k)\nmid d!$, then the pairing $S_{d} \times \D_{d} \to k$ is perfect, and 
for every $F\in \D_{d}$, we have
$F\in \Sym^{d} E(F) \subset \D_{d}$ (see \cite[\S2.2]{apolarity} for details), so that
$F$ can be expressed as a polynomial in its essential variables. This property 
explains the name of $E(F)$. 

Importantly, working under the assumption that $\chark(k)\nmid d!$, the graded $k$-algebra $S/F^{\perp}$ is a Gorenstein Artin local ring 
with socle in degree $d$. In fact, a 
well-known theorem of Macaulay establishes a bijection between graded Gorenstein 
Artin quotients $S/I$ of socle degree
$d$ and elements of $\PP \D_{d}$ 
(see, e.g., \cite[Lemma 2.12]{iarrobino-kanev} or \cite[Exercise 21.7]{eisenbud}).

\begin{definition}[Gradient morphism]
\label{D:gradient}
Let $x_1,\dots,x_n$ be a basis of $V$. For $f\in S_{d+1}$,
we define
\[
\langle\nabla f\rangle:=\langle \partial f/\partial x_1,\dots, \partial f/\partial x_n\rangle \subset S_{d}
\]
to be the span of all first-order partials of $f$.  
If $\dim_{k} \langle\nabla f\rangle=n$, we denote by $\nabla f$ the point of $\Grass(n, S_d)$
corresponding to $\langle\nabla f\rangle$, and call $\nabla f$ \emph{the gradient point of $f$}.
\end{definition}

The \emph{Jacobian ideal} of $f$ is defined to be \[
J_f:=(\nabla f)=(\partial f/\partial x_1,\dots, \partial f/\partial x_n) \subset S,
\]
and the \emph{Milnor algebra} of $f$ is $M_f:=S/J_f$.

Following the terminology of \cite[\S2.2]{apolarity}, 
we call $f\in S_{d}$ (respectively, $\bar f \in \PP S_{d}$) \emph{concise} if 
it cannot be written as a form in less than $n=\dim_k V$ variables, or 
equivalently if $f\in \Sym^{d} W$ for $W\subset V$ implies that $W=V$. 
If $\chark(k) \nmid d!$, then $f\in S_d$ is concise if and only if 
$E(F)=V$ if and only if $\dim_{k}\langle \nabla f\rangle =n$.\footnote{But note
that $f=x^2+yz$ is always concise in $k[x,y,z]$ while $\dim_k \langle \nabla f\rangle=2<3$
if $\chark(k)=2$.}
We set 
\[
\PP(S_{d})^{c}:=\{\bar f \in  \PP S_{d} \mid \dim_{k}\langle \nabla f\rangle =n\}.
\]
Clearly, $\PP(S_{d})^{c}$ is an open subset of $\PP S_{d}$.
If $\chark(k) \nmid d!$, then $\PP(S_{d})^{c}$ is the locus of concise forms, and so 
$\bar f\in \PP S_{d}\setminus \PP(S_{d})^{c}$ if and only
if the hypersurface defined by $f$ is a cone if and only if $E(f)\subsetneq V$.

\begin{remark} Even though we allow $k$ to have positive characteristic, we do not take $\D$ to be
the divided power algebra (cf. \cite[Appendix A]{iarrobino-kanev}), as the reader might have anticipated. 
The reason for this is that at several places
we cannot avoid but to impose a condition that $\chark(k)$ is large enough (or zero).  
In this case, the divided power algebra is isomorphic to $\D$
up to the degree in which we work.
\end{remark}

\subsection{Direct sums and products}  
In this subsection, we recall a well-known definition of a direct sum polynomial, and 
introduce more exotic notions of a direct product polynomial, and a direct sum space of polynomials.

\subsubsection{Direct sum forms}
A form $f\in \Sym^{d+1} V$ is called a \emph{direct sum}
if there is a direct sum decomposition $V=U\oplus W$ and nonzero $f_1\in \Sym^{d+1} U$
and $f_2\in \Sym^{d+1} W$ such that $f=f_1+f_2$.
In other words, $f$ is a direct sum
if and only if for some choice of a basis $x_1,\dots, x_n$ of $V$, we have that
\[
f=f_1(x_1,\dots, x_a)+f_2(x_{a+1},\dots,x_n), 
\]
where $1\leq a\leq n-1$, and $f_1, f_2 \neq 0$.

\begin{remark} Note that the roles of $S$ and $\D$ are interchangeable in \S\ref{S:notation}, and so for $f\in S$,
we can define the apolar ideal $f^{\perp} \subset \D$ and the space of essential variables
$E(f)\subset S_1$. With this notation, if $\chark(k)\nmid (d+1)!$, then 
$f\in S_{d+1}$ is a direct sum if and only if we
can write $f=f_1+f_2$, where $f_1,f_2\neq 0$ and $E(f_1)\cap E(f_2)=0$.
Furthermore, under the same assumption on $\chark(k)$,
we have that %$E(f)\subset V$ is dual to $(f^{\perp})_1\subset V^{\vee}$, and 
$\dim_k \langle \nabla f\rangle=\dim_k E(f)$.
\end{remark}

\subsubsection{LDS forms} 
We say that $f\in S_{d+1}$ is an \emph{LDS} form (or, simply, $f$ is LDS) 
if, after a linear change of variables, 
we can write
\begin{equation}\label{E:lds} 
f(x_1,\dots,x_n)=\sum_{i=1}^\ell x_i \frac{\partial H(x_{\ell+1}, \dots, x_{2\ell})}{\partial x_{\ell+i}} + G(x_{\ell+1},\dots, x_n),
\end{equation} 
where $H$ and $G$ are degree $d+1$ forms, in $\ell$ and $n-\ell$ variables, respectively.
To our knowledge, LDS forms first appeared in \cite[\S 4.2]{apolarity} as forms
that are limits of direct sums. We recall that by \cite[Equation (4), p.697]{apolarity},
we have that $f$ from \eqref{E:lds} satisfies
\begin{multline}
f=\lim_{t\to 0}\frac{1}{t}\bigl[H(tx_{1}+x_{\ell+1}, \dots, tx_{\ell}+x_{2\ell})-H(x_{\ell+1},\dots,x_{2\ell})\\
+tG(tx_{1}+x_{\ell+1}, \dots, tx_{\ell}+x_{2\ell}, x_{2\ell+1},\dots, x_n)\bigr].
\end{multline}
Since the form on the right-hand side of the above equation is a direct sum for $t\neq 0$, 
we obtain an explicit presentation of the LDS form $f$ as a limit of direct sums. 
Conversely, \cite[Theorem 4.5]{apolarity} proves
that every concise limit of direct sums that is not itself a direct sum is an LDS form. 

\begin{lemma}\label{L:lds} If $f$ is an LDS form of degree $\deg(f)=d+1\geq 3$, then $f$ is GIT non-semistable
with respect to the standard $\SL(n)$-action on $S_{d+1}$.
\end{lemma}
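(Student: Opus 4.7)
The plan is to apply the Hilbert--Mumford numerical criterion directly, by exhibiting an explicit destabilizing one-parameter subgroup of $\SL(V)$. I would fix the coordinates $x_1,\dots,x_n$ in which $f$ has the LDS presentation \eqref{E:lds}, and consider the diagonal 1-PS $\lambda\colon \GG_m \to \SL(V)$ acting with weight $-a$ on each of $x_1,\dots,x_\ell$, weight $b$ on each of $x_{\ell+1},\dots,x_{2\ell}$, and weight $c$ on each of $x_{2\ell+1},\dots,x_n$ (the third block being absent when $n = 2\ell$), subject to the trace-zero constraint $-\ell a + \ell b + (n-2\ell)c = 0$. The point of this block decomposition is that the two kinds of monomials appearing in $f$ are each governed by a single weight.

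Next I would read off the $\lambda$-weights of the monomials appearing in $f$. A monomial in $\sum_{i=1}^{\ell} x_i\,\partial H/\partial x_{\ell+i}$ contains exactly one factor from the first block and $d$ factors from the second, and so has weight $-a + d b$. A monomial in $G(x_{\ell+1},\dots,x_n)$ has weight $kb + (d+1-k)c$ for some $0 \leq k \leq d+1$, which is positive whenever $b,c > 0$. Taking $b,c > 0$ and eliminating $a$ via the trace-zero constraint, the remaining inequality $-a + db > 0$ becomes $(d-1)b > c(n-2\ell)/\ell$; this is exactly where the hypothesis $d \geq 2$ is used, and it is readily satisfied by taking $b$ large enough relative to $c$, after which one clears denominators to obtain integer weights. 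In the edge case $n = 2\ell$ the third block is absent and the constraint collapses to $a = b$, so the monomial weights are $(d-1)b$ and $(d+1)b$, both positive for $b > 0$ and $d \geq 2$. The Hilbert--Mumford criterion then yields $\lim_{t\to 0} \lambda(t)\cdot f = 0$, so $f$ is unstable, and in particular non-semistable.

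I do not expect any substantive obstacle: the construction is explicit and the bookkeeping elementary. The single delicate point is the inequality $(d-1)b > c(n-2\ell)/\ell$, which is precisely where the hypothesis $\deg(f) = d+1 \geq 3$ (equivalently $d \geq 2$) enters, reflecting the genuine distinction between quadrics and higher-degree forms. The argument requires no conciseness assumption and no restriction on the base field, and the degenerate cases $H = 0$ or $G = 0$ either simplify the weight computation or immediately exhibit $f$ as a cone, which is itself non-semistable.
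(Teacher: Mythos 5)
Your proof is correct and follows essentially the same route as the paper: both arguments exhibit an explicit block-diagonal one-parameter subgroup of $\SL(V)$ adapted to the three groups of variables $\{x_1,\dots,x_\ell\}$, $\{x_{\ell+1},\dots,x_{2\ell}\}$, $\{x_{2\ell+1},\dots,x_n\}$, check that every monomial of $f$ has weight of a uniform strict sign (using $d\geq 2$ for the monomials $x_i\,\partial H/\partial x_{\ell+i}$), and invoke the Hilbert--Mumford criterion. The only differences are cosmetic: the paper uses the weight vector $(1+\epsilon,1,\dots,1,-1,\dots,-1,-\epsilon/(n-2\ell),\dots)$ and arranges all monomial weights to be negative, whereas you solve the trace-zero constraint differently and arrange them to be positive, which amounts to replacing your $\lambda$ by $\lambda^{-1}$.
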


\begin{proof}
Suppose $f$ is as in \eqref{E:lds}.
%\begin{equation}\label{quasi-deg-2} 
%f(x_1,\dots,x_n)=\sum_{i=1}^\ell x_i \frac{\partial H(x_{\ell+1}, \dots, x_{2\ell})}{\partial x_{\ell+i}} + G(x_{\ell+1},\dots, x_n).
%\end{equation} 
Then with respect to the one-parameter subgroup of $\SL(n)$ acting on a basis $x_1,\dots, x_{\ell}, x_{\ell+1}, \dots, x_{2\ell}, x_{2\ell+1}, \dots, x_n$ of $V$ with weights
\[
(1+\epsilon, \underbrace{1,\dots, 1}_{\ell-1}, \underbrace{-1,\dots, -1}_{\ell},  -\epsilon/(n-2\ell), \dots, -\epsilon/(n-2\ell)), \quad \text{where $0<\epsilon \ll 1$},\] 
the maximum weight of a monomial in $f$ is 
\[
\max\{(1+\epsilon)-d, -(d+1)\epsilon/(n-2\ell)\}<0.
\]
Hence $f$ is non-semistable by the Hilbert-Mumford numerical criterion.
\end{proof}

\subsubsection{Direct product forms}
By analogy with direct sums, %with the above definition, 
we will call a nonzero form $F \in \D$ a \emph{direct product}
if there is a non-trivial direct sum decomposition $V^{\vee}=U\oplus W$ such that 
$F=F_1F_2$ for some $F_1\in \Sym U$ and $F_2\in \Sym W$. 
In other words, a nonzero homogeneous $F\in \Sym \D$ is a direct product
if and only if for some choice of a basis $z_1,\dots, z_n$ of $V^{\vee}$, we have that
\begin{equation}\label{E:direct-product}
F(z_1,\dots,z_n)=F_1(z_1,\dots, z_a)F_2(z_{a+1},\dots,z_n), \quad \text{where $1\leq a\leq n-1$.}
\end{equation} 
Furthermore, we call a direct product decomposition in \eqref{E:direct-product} \emph{balanced} if 
\[
(n-a)\deg(F_1)  = a\deg (F_2).
\]
Note that if $\chark(k)\nmid (\deg F)!$, then a non-trivial factorization 
$F=F_1F_2$ is a direct product decomposition if and only if 
$E(F_1)\cap E(F_2)=0$. This observation reduces the problem of recognition of direct products
to a factorization problem.

\subsubsection{Direct sum and decomposable spaces of forms}
\label{direct-sum-spaces}
Suppose $L\subset \Sym^d V$ is a subspace of dimension $m$.
We say that $[L]\in \Grass(m, \Sym^d V)$ %, where as always $n=\dim_k V$, 
is a \emph{direct sum} 
if there is a non-trivial direct sum decomposition $V=U\oplus W$ and elements
$[L_1]\in \Grass(m_1, \Sym^{d} U)$ and $[L_2]\in \Grass(m_2, \Sym^{d} W)$, where $m_1+m_2=m$, such that
\begin{equation}\label{E:bds}
L=L_1+L_2 \subset \Sym^{d} U\oplus \Sym^{d} W \subset \Sym^{d} V.
\end{equation}
%When $n=\dim V$, w
We will further say that $[L]$ is a \emph{balanced direct sum} 
if $m_1=\dim_k U$ and $m_2=\dim_k W$.
Note that in this case necessarily $m=\dim_k V=n$. 
\begin{remark}\label{R:Gm-bds} Direct sums are stabilized by 
non-trivial one-parameter subgroups of $\SL(V)$. For example, the space $L$ in \eqref{E:bds} is fixed by any one-parameter subgroup acting with weight $\dim_k W$ on $U$ and weight $-\dim_k U$ on $W$. 
\end{remark}

Finally, we say that $[L]\in \Grass(n, \Sym^d V)$ is \emph{decomposable}
if there is a non-trivial subspace $U\subset V$ and elements
$[L_1]\in \Grass(\dim_k U, \Sym^{d} U)$ and $[L_2]\in \Grass(n-\dim_{k} U, \Sym^{d} V)$ such that
\[
L=L_1+L_2 \subset \Sym^{d} V.
\]

\subsection{Associated forms} Next, we briefly recall the theory of associated forms as developed in 
\cite{eastwood-isaev1,eastwood-isaev2,alper-isaev-assoc-binary,fedorchuk-isaev}. To avoid
trivialities, we adopt the following:
\begin{assumption}\label{ass-nd} Assume $n\geq 2$, and that $d\geq 3$ if $n=2$,
and $d\geq 2$ otherwise. In particular, we have in this case that $n(d-1)\geq d+1$.
\end{assumption}

Let $\Grass(n,S_d)_{\Res}$ be the affine open subset %of complete
%intersections  
in $\Grass(n,S_d)$ %Namely, $\Grass(n,S_d)_{\Res}$ 
parameterizing linear 
subspaces $\langle g_1,\dots, g_n\rangle \subset S_d$ such that $g_1,\dots, g_n$ form a regular 
sequence in $S$, or, equivalently, such that the ideal $(g_1,\dots, g_n)$ is a complete intersection,
or, equivalently, such that the resultant $\Res(g_1,\dots, g_n)$ is nonzero.
Note that, if $\chark(k)\nmid (d+1)!$, then $f\in S_{d+1}$ 
is smooth if and only if $\nabla f \in \Grass(n,S_d)_{\Res}$.

For every $U=\langle g_1,\dots, g_n\rangle  \in \Grass(n,S_d)_{\Res}$, the ideal
$I_U=(g_1,\dots, g_n)$ is a complete intersection ideal, and the $k$-algebra
$S/I_U$ is a graded Gorenstein Artin local ring with socle in degree $n(d-1)$.
Suppose $\chark(k)\nmid (n(d-1))!$.
Then, by Macaulay's theorem, there exists a unique up to scaling form $\AA(U) \in \D_{n(d-1)}$ such that
\begin{equation*}%\label{associated-form-U}
\AA(U)^{\perp} = I_U.
\end{equation*}
The form $\AA(U)$ is called \emph{the associated form of $g_1,\dots,g_n$} by
Alper and Isaev, who systematically studied it in \cite[Section 2]{alper-isaev-assoc-binary}.
In particular, they showed\footnote{
Although given over $\CC$, their proof
applies whenever $\chark(k)=0$ or $\chark(k)>n(d-1)$.} that the assignment 
$U \to \overline{\AA(U)}\in \PP \D_{n(d-1)}$ 
gives rise to an $\SL(n)$-equivariant 
\emph{associated form morphism}
\[\label{E:assoc-morphism}
\AA\colon \Grass(n,S_d)_{\Res}\rightarrow \PP \D_{n(d-1)}. %\qquad \langle g_1,\dots,g_n\rangle\mapsto \AA(g_1,\dots,g_n).
\]
When $U=\nabla f$ for a smooth form $f\in S_{d+1}$, we set 
\[
A(f):=\AA(\nabla f)=\AA(\langle \partial f/\partial x_1, \dots, \partial f/\partial x_n\rangle),
\] and, following 
Eastwood and Isaev \cite{eastwood-isaev1}, call $A(f)$
\emph{the associated form of $f$}.
The defining property of $A(f)$ is that 
the apolar ideal of $A(f)$ is the Jacobian ideal of $f$:
\begin{equation*}%\label{E:AF}
A(f)^{\perp}=J_f.
\end{equation*}
This means that $A(f)$ is a homogeneous 
Macaulay inverse system of the Milnor algebra $M_f=S/J_f$.

Summarizing, when $\chark(k)\nmid (n(d-1))!$, we have the following commutative diagram of $\SL(n)$-equivariant morphisms:

\begin{equation*}
\xymatrix{ 
\bigl(\PP S_{d+1} \bigr)_{\Delta}  \ar[rr]^{\nabla} \ar[rd]^{A}& & \Grass\bigl(n, S_d\bigr)_{\Res}\ar[dl]^{\AA} \\
& \PP\bigl(\D_{n(d-1)}\bigr)&
}
\end{equation*}

\begin{remark} In \cite{alper-isaev-assoc-binary}, Alper and Isaev define the associated form 
$\AA(g_1,\dots,g_n)$ as an element
of $\D_{n(d-1)}$, which they achieve by choosing a canonical generator of the socle of $S/(g_1,\dots,g_n)$ given
by the Jacobian determinant of $g_1,\dots, g_n$. For our purposes, it will suffice to 
consider $\AA(\langle g_1,\dots,g_n\rangle)$ defined up to a scalar. 
\end{remark}

\begin{comment}
\subsection{Prior works}
\label{S:prior-work} 
In \cite{kleppe-thesis}, Kleppe uses the quadratic part of the apolar ideal $f^{\perp}$ to 
define an associative algebra $M(f)$ of finite dimension over the base field
($M(f)$ is different from the Milnor algebra $M_f$). He
then proves that, over an arbitrary field, direct sum decompositions of $f$ are 
in bijection with complete sets of orthogonal idempotents
of $M(f)$. 

In \cite{apolarity}, Buczy\'nska, Buczy\'nski, Kleppe, and Teitler prove
that for a non-degenerate form $f\in S_{d+1}$ over an algebraically closed field,
the apolar ideal $f^{\perp}$ has 
a minimal generator in degree $d+1$ if and only if either $f$ is a direct sum, or $f$ is 
an LDS form. 
%a limit of direct sums
%in which case the $\GL(n)$-orbit of $f$
%contains an element of the form
%\begin{equation}\label{limit} 
%\sum_{i=1}^\ell x_i \frac{\partial h(x_{\ell+1}, \dots, x_{2\ell})}{\partial x_{\ell+i}} + g(x_{\ell+1},\dots, x_n),
%\end{equation} 
%where $h$ and $g$ are degree $d+1$ forms, in $\ell$ and $n-\ell$ variables, respectively.
%Since the form given by Equation \eqref{limit} is visibly $\SL(n)$-unstable, and in particular
%Since the form given by Equation \eqref{limit} is visibly $\SL(n)$-unstable, and in particular
Since LDS forms are GIT non-semistable and in particular 
singular, this translates into a computable and effective criterion for \emph{recognizing} whether a smooth form 
$f$ is a direct sum over an algebraically closed field.

A key step in the proof of the direct sum criterion in \cite{apolarity} is the
Jordan normal form decomposition of a certain linear operator, which in general requires solving a
characteristic equation. 
Similarly, finding a complete set of orthogonal idempotents requires solving a system of quadratic equations. This makes it challenging to turn \cite{apolarity} or \cite{kleppe-thesis} into an algorithm for finding
direct sum decompositions when they exist. 
\end{comment}

\section{First properties of direct sums and LDS forms}

We begin by recording several immediate properties of direct sums and, more generally, of LDS forms.

\begin{lemma}\label{L:gradient-ds} Suppose $f\in S_{d+1}$ is a direct sum. Let $\bar f$ be its image in $\PP S_{d+1}$. Then the following hold:
\begin{enumerate}
\item\label{L:grad-gm} 
If $k\neq \mathbb{F}_2$, there is a non-trivial 
one-parameter subgroup $\rho \colon \GG_m \hookrightarrow \SL(V)$ defined over $k$ 
such that $\rho\cdot \langle \nabla f\rangle=\langle \nabla f\rangle$ but
$\rho\cdot \overline{f}\neq \overline{f}$.  Consequently, $\Stab_{\SL(V)}(\bar f)$ is
a proper subgroup of $\Stab_{\SL(V)}(\langle \nabla f\rangle)$.
%Moreover,
%we have the following $\rho$-weight-space decompositions 
%\[%V=V_{\left(\frac{n-a}{\gcd(n-a,a)}\right)}\oplus V_{\left(-\frac{a}{\gcd(n-a,a)}\right)} \ \text{and} \
%\langle \nabla f\rangle=\langle\nabla f\rangle_{\left(\frac{(n-a)d}{\gcd(n-a,a)}\right)}\oplus \langle \nabla f\rangle_{\left(\frac{-ad}{\gcd(n-a,a)}\right)}.
%\] 
%\item $\langle \nabla f\rangle \subspace S_d$ is a direct sum. POSITIVE CHARACTERISTIC!
\item The set of $k$-points of \[
\{\bar g\in \PP S_{d+1} \mid \langle \nabla g\rangle=\langle\nabla f\rangle\}
\]
 contains $k^*=k\setminus\{0\}$. 
\item\label{L:grad-bds} If furthermore $\dim_k \langle \nabla f\rangle=n$, then
$\nabla f \in \Grass(n, S_d)$ is a balanced direct sum (cf. \S\ref{direct-sum-spaces}),
and the set of $k$-points of \[
\{\bar g\in \PP S_{d+1} \mid \langle \nabla g\rangle=\langle\nabla f\rangle\ \text{and $\langle\nabla (f-cg)\rangle\neq 0$ for all $c\in k$}\}
\]
contains $k\setminus \{0,1\}$.
\end{enumerate}
\end{lemma}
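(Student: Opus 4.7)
The plan rests on a single structural observation. Writing $f=f_1(x_1,\dots,x_a)+f_2(x_{a+1},\dots,x_n)$ and setting $U=\langle x_1,\dots,x_a\rangle$, $W=\langle x_{a+1},\dots,x_n\rangle$, the partials of $f$ separate according to which summand they come from, giving
\[
\langle\nabla f\rangle=\langle\nabla f_1\rangle\oplus\langle\nabla f_2\rangle\subset \Sym^{d}U\oplus\Sym^{d}W\subset \Sym^{d}V,
\]
where the direct sum on the right is genuine because $\Sym^{d}U\cap\Sym^{d}W=0$. Every item will be deduced from this observation together with the one-parameter family $g_t:=f_1+tf_2$ parameterized by $t\in k$.

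For (1), I would exhibit the one-parameter subgroup $\rho\colon\GG_m\hookrightarrow\SL(V)$ acting with weight $n-a$ on $U$ and weight $-a$ on $W$; it is non-trivial because $1\le a\le n-1$, and it lies in $\SL(V)$ since $a(n-a)+(n-a)(-a)=0$. Since $\rho$ acts as the single scalar $t^{d(n-a)}$ on $\Sym^{d}U$ and as $t^{-da}$ on $\Sym^{d}W$, it preserves each summand in the displayed decomposition, hence preserves $\langle\nabla f\rangle$ as a subspace. On $f$ itself, however, $\rho(t)\cdot f=t^{(d+1)(n-a)}f_1+t^{-(d+1)a}f_2$, and the two integer weights differ by $(d+1)n\ne 0$; so the orbit morphism $t\mapsto\rho(t)\cdot\bar f$ is non-constant. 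The hypothesis $k\ne\mathbb{F}_2$ gives $k^*\ne\{1\}$, and after possibly rescaling the weights of $\rho$ by a positive integer, one can pick a $k$-rational $t$ for which $\rho(t)\cdot\bar f\ne\bar f$. The strict stabilizer containment $\Stab_{\SL(V)}(\bar f)\subsetneq\Stab_{\SL(V)}(\langle\nabla f\rangle)$ is then a formal consequence.

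For (2) and the first half of (3), I would use the family $g_t=f_1+tf_2$ directly: linearity of the gradient and $t\ne 0$ yield $\langle\nabla g_t\rangle=\langle\nabla f_1\rangle+t\langle\nabla f_2\rangle=\langle\nabla f\rangle$, and the map $t\mapsto\bar g_t$ is injective on $k^*$ by the linear independence of $f_1,f_2$, which gives (2). Under the additional hypothesis $\dim_k\langle\nabla f\rangle=n$ of (3), the a priori bounds $\dim_k\langle\nabla f_1\rangle\le a$ and $\dim_k\langle\nabla f_2\rangle\le n-a$ must be equalities, so $\nabla f$ is a balanced direct sum in $\Grass(n,S_d)$ in the sense of \S\ref{direct-sum-spaces}. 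For the remaining claim of (3) I restrict to $t\in k\setminus\{0,1\}$; then the identity $f-cg_t=(1-c)f_1+(1-ct)f_2$, combined with the non-vanishing of $\langle\nabla f_1\rangle$ and $\langle\nabla f_2\rangle$ (just forced) and the fact that they lie in complementary direct summands, shows that $\langle\nabla(f-cg_t)\rangle=0$ would require $(1-c)\langle\nabla f_1\rangle=0$ and $(1-ct)\langle\nabla f_2\rangle=0$ simultaneously, hence $c=1$ and $ct=1$, i.e.\ $t=1$; this is excluded, so each such $g_t$ lies in the stated locus.

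The main obstacle I anticipate is not conceptual but a careful handling of the $\mathbb{F}_2$ caveat in (1): scheme-theoretically the $\rho$-action on $\bar f$ is always non-trivial, but to produce a genuine $k$-rational point of $\GG_m$ witnessing $\rho(t)\cdot\bar f\ne\bar f$ requires $|k^*|\ge 2$, which is exactly the content of the $\mathbb{F}_2$ exclusion. Everything else reduces to the linear-algebra bookkeeping outlined above, which is uniform across characteristics.
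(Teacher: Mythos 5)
Your argument is correct and is essentially the paper's own proof: the same decomposition $\langle\nabla f\rangle=\langle\nabla f_1\rangle\oplus\langle\nabla f_2\rangle$, the same one-parameter subgroup (the paper merely divides the weights by $\gcd(a,n-a)$ to make them coprime), and the same family $f_1+tf_2$ for parts (2) and (3); your coefficient $(1-ct)$ in part (3) is in fact the correct one. One caveat on your closing remark about part (1): the claim that one can rescale the weights of $\rho$ to produce a $k$-rational $t$ with $\rho(t)\cdot\bar f\neq\bar f$ is false --- over $\mathbb{F}_q$ with $(q-1)\mid (d+1)n$ (e.g.\ $f=x_1^4+x_2^4$ over the field with three elements) every $k$-point of $\GG_m$ fixes $\bar f$, and multiplying the weights by a positive integer only makes this worse. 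The assertion $\rho\cdot\bar f\neq\bar f$ must therefore be read scheme-theoretically, i.e.\ $f$ is not a $\rho$-weight vector and the orbit morphism is non-constant, which you do establish and which is also all the paper's proof shows.
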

\begin{proof} Suppose \[
f=f_1(x_1,\dots,x_a)+f_2(x_{a+1},\dots, x_n)\]
in some basis $x_1,\dots, x_n$ of $V$, where $f_1,f_2\neq 0$. 
Let $\rho$ be the one-parameter subgroup of $\SL(V)$ 
acting with weight $(n-a)/\gcd(n-a,a)$ on the variables $\{x_i\}_{i=1}^a$
and weight $-a/\gcd(n-a,a)$ on $\{x_i\}_{i=a+1}^n$. Since the weights are of opposite sign
and coprime, $\rho$ is a non-trivial subgroup of $\SL(V)$ defined over $k$. 
Then we have
\begin{multline}\label{E:nabla-direct}
\langle \nabla f\rangle=\langle \nabla f_1\rangle\oplus \langle \nabla f_2\rangle,  \\
 \text{where $\langle\nabla f_1\rangle \subset k[x_1,\dots, x_a]$ and $\langle\nabla f_2\rangle \subset k[x_{a+1},\dots, x_n]$.}
\end{multline}

It is clear that $\langle\nabla f\rangle$
is invariant under $\rho$, and that $f$ is not $\rho$-invariant because
$f_1, f_2\neq 0$. This proves (1).

Clearly, the forms \[
f_{\lambda}:=f_1+\lambda f_2,
\] where $\lambda \in k^*$, all satisfy 
\[
\langle \nabla f_{\lambda}\rangle=\langle \nabla f\rangle,
\] and are pairwise non-proportional for distinct values of $\lambda$. This proves (2).

Suppose further that $\dim_k\langle \nabla f\rangle=n$ so that $f$ is concise.
Then in \eqref{E:nabla-direct}, we have necessarily 
that $\dim_k\langle \nabla f_1\rangle=a$ 
and $\dim_k\langle \nabla f_2\rangle=n-a$.
Thus $\nabla f$ is a balanced direct sum. We also see that 
\[
\nabla (f-c(f_1+\lambda f_2))=(1-c)\nabla f_1+(c-\lambda) \nabla f_2 \neq 0
\]
for all $\lambda\neq 1$. 
This proves (3).
\end{proof}

Under further assumptions on the characteristic of the field $k$, the converse to 
Lemma \ref{L:gradient-ds}(3) holds, and we have the following simple characterization of
direct sums in terms of their gradient points. 
\begin{lemma}\label{L:gradient}
%Assume $\dim_{k} \langle \nabla f\rangle=n$. (This is for example
%satisfied whenever 
Assume $\chark(k) \nmid (d+1)!$ and 
$f\in S_{d+1}$ is a concise form (equivalently, $\dim_k \langle \nabla f\rangle=\dim_k V=n$). 
Then $f$ is a direct sum if and only if 
$\nabla f \in \Grass(n,S_d)$ is a direct sum if and only if 
$\nabla f \in \Grass(n,S_d)$ is a balanced direct sum.  
\end{lemma}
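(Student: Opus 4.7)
The plan is to close an equivalence cycle. Lemma \ref{L:gradient-ds}(3) already yields the implication from $f$ being a direct sum to $\nabla f$ being a balanced direct sum, and a balanced direct sum is by definition a direct sum in $\Grass(n,S_d)$. Hence the only substantive implication is ``$\nabla f$ is a direct sum $\Rightarrow$ $f$ is a direct sum,'' which I will prove by a bi-grading argument relative to the given decomposition of $V$.

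Suppose $\langle\nabla f\rangle = L_1\oplus L_2$ relative to a nontrivial decomposition $V = U\oplus W$ with $0\neq L_1\subset \Sym^d U$ and $0\neq L_2\subset \Sym^d W$. Choose bases $x_1,\dots, x_a$ of $U$ and $x_{a+1},\dots, x_n$ of $W$, and decompose $f = \sum_{j=0}^{d+1} f^{(j)}$ into its bi-homogeneous components $f^{(j)}\in \Sym^j U\otimes \Sym^{d+1-j} W$. The key observation is that $\partial f^{(j)}/\partial x_i$ is itself bi-homogeneous: of bi-degree $(j-1,d+1-j)$ when $x_i\in U$, and of bi-degree $(j,d-j)$ when $x_i\in W$. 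By hypothesis, $\partial f/\partial x_i\in L_1\oplus L_2\subset \Sym^d U\oplus\Sym^d W$, which is concentrated in bi-degrees $(d,0)$ and $(0,d)$ only. Comparing the two bi-grading decompositions of $\partial f/\partial x_i$ forces $\partial f^{(j)}/\partial x_i = 0$ for every $x_i\in U$ when $2\le j\le d$, and for every $x_i\in W$ when $1\le j\le d-1$.

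Under $\chark(k)\nmid (d+1)!$, every positive integer up to $d+1$ is invertible in $k$, so vanishing of all $U$-partials (respectively, $W$-partials) of $f^{(j)}$ genuinely forces $f^{(j)}$ to be independent of those variables. Combined with its bi-degree, this forces $f^{(j)} = 0$ for every $1\le j\le d$. Hence $f = f^{(0)} + f^{(d+1)}$ with $f^{(0)}\in \Sym^{d+1} W$ and $f^{(d+1)}\in \Sym^{d+1} U$. Since $L_1\neq 0$ forces $f^{(d+1)}\neq 0$ and $L_2\neq 0$ forces $f^{(0)}\neq 0$, this is precisely the desired direct-sum decomposition of $f$ relative to $V = U\oplus W$.

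The argument is essentially bi-grading bookkeeping, and the characteristic hypothesis is exactly what is needed to conclude variable-independence from vanishing of partials (otherwise $f^{(j)}$ could involve pure $p$-th powers of the variables in question). The only edge case is $d=1$, where the ranges $2\le j\le d$ and $1\le j\le d-1$ degenerate; but there $f$ is a concise quadratic form, and the assumption $\chark(k)\nmid 2$ makes it diagonalizable and hence a direct sum when $n\ge 2$. The main potential pitfall to watch is the subtlety that the decomposition of $V$ realizing $\nabla f$ as a direct sum in $\Grass(n,S_d)$ is the \emph{same} decomposition that realizes $f$ as a direct sum; this is built into the bi-grading argument from the outset.
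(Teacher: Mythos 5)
Your proof is correct, and it takes a genuinely different (though equally elementary) route for the one substantive implication, ``$\nabla f$ a direct sum $\Rightarrow$ $f$ a direct sum.'' The paper writes each partial as $\partial f/\partial x_i = u + v$ with $u \in k[x_1,\dots,x_a]_d$ and $v \in k[x_{a+1},\dots,x_n]_d$, computes the mixed second partial $\partial^2 f/\partial x_i\partial x_j$ (for $i \le a < j$) in two ways via symmetry of mixed partials to conclude it lies in $k[x_1,\dots,x_a]_{d-1}\cap k[x_{a+1},\dots,x_n]_{d-1}=(0)$, and then invokes the characteristic hypothesis once to integrate the vanishing of all mixed second partials into $f \in k[x_1,\dots,x_a]_{d+1}\oplus k[x_{a+1},\dots,x_n]_{d+1}$. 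You never touch second partials: you kill the middle bi-homogeneous components $f^{(j)}$, $1\le j\le d$, by comparing bi-degrees of first partials against the allowed bi-degrees $(d,0)$ and $(0,d)$. Both arguments use the characteristic hypothesis at the analogous point (vanishing partials $\Rightarrow$ independence of the corresponding variables), and both are correct. A small merit of your version is that it forces the degenerate case $d=1$ into the open and handles it (diagonalization of a concise quadratic form in $\chark(k)\neq 2$), whereas the paper's intersection argument silently degenerates there to $k\cap k\neq(0)$. One cosmetic remark: your final appeal to $L_1,L_2\neq 0$ to get $f^{(d+1)},f^{(0)}\neq 0$ is fine, but it can be replaced outright by conciseness of $f$ (if either component vanished, $f$ would live in a proper subspace of variables), which sidesteps any ambiguity about whether the Grassmannian direct-sum definition permits $m_1=0$.
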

\begin{proof}
Suppose $f=f_1(x_1,\dots,x_a)+f_2(x_{a+1},\dots, x_n)$ 
in some basis $x_1,\dots, x_n$ of $V$, where $f_1,f_2\neq 0$. 
Then using $\dim_{k} \langle \nabla f\rangle=n$, we deduce from \eqref{E:nabla-direct}
that $\langle \nabla f\rangle$ is a balanced direct
sum.

Suppose $\nabla f$ decomposes as a %balanced  
direct sum in a basis $x_1,\dots,x_n$ of $V$. Then
\[
\nabla f=\langle s_1,\dots, s_b, t_1, \dots, t_{n-b}\rangle,
\]
for some $s_1,\dots,s_b\in k[x_1,\dots,x_a]_d$ and $t_1, \dots, t_{n-b}\in k[x_{a+1},\dots,x_n]_d$.

Then for every $1\leq i \leq a$ and $a+1\leq j \leq n$, we can write 
\begin{align*}
\partial f/\partial x_i &= u_1+v_1,  \\
\partial f/\partial x_j &= u_2+v_2,
\end{align*}
where $u_1, u_2 \in k[x_1,\dots,x_a]_d$ and $v_1, v_2 \in k[x_{a+1},\dots,x_n]_d$.
It follows that 
\[
\frac{\partial^2 f}{\partial x_i \partial x_j} =\frac{\partial v_1}{\partial x_j} \in k[x_{a+1},\dots,x_n]_{d-1}.
\]
and 
\[
\frac{\partial^2 f}{\partial x_j \partial x_i} =\frac{\partial u_2}{\partial x_i} \in k[x_1,\dots,x_a]_{d-1}.
\] 
In other words,  for every $1\leq i \leq a$ and $a+1\leq j \leq n$, we have
\[
\frac{\partial^2 f}{\partial x_i \partial x_j} \in k[x_1,\dots,x_a]_{d-1}\cap k[x_{a+1},\dots,x_n]_{d-1} = (0).
\]
It follows that $\dfrac{\partial^2 f}{\partial x_i \partial x_j}=0$ for all $1\leq i \leq a$ and $a+1\leq j \leq n$. 
Using the assumption on $\chark(k)$, we conclude that 
\[
f \in k[x_1,\dots, x_a]_{d+1} \oplus k[x_{a+1},\dots,x_n]_{d+1},
\]
and so $f$ is a direct sum, in the same basis as $\nabla f$.

\end{proof}

For LDS forms, we have the following analog of Lemma \ref{L:gradient-ds}.
\begin{lemma}\label{L:gradient-lds} Suppose $f\in S_{d+1}$ is an LDS form.
Let $\bar f$ be its image in $\PP S_{d+1}$. 
Then the following hold:
\begin{enumerate}
\item\label{L:grad-gm} 
If $k\neq \mathbb{F}_2$, there is a non-trivial 
one-parameter subgroup $\rho \colon \GG_m \hookrightarrow \SL(V)$ 
such that $\rho\cdot \langle \nabla f\rangle=\langle \nabla f\rangle$ but
$\rho\cdot \overline{f}\neq \overline{f}$.  Consequently, $\Stab_{\SL(V)}(\bar f)$ is
a proper subgroup of $\Stab_{\SL(V)}(\langle \nabla f\rangle)$.
%Moreover,
%we have the following $\rho$-weight-space decompositions 
%\[V=V_{(n-a)}\oplus V_{(-a)} \ \text{and} \
%\langle \nabla f\rangle=\langle\nabla f\rangle_{((n-a)d)}\oplus \langle \nabla f\rangle_{(-ad)}.
%\] 
\item The set of $k$-points of
 \[
 \{\bar g\in \PP S_{d+1} \mid \langle\nabla g\rangle=\langle\nabla f\rangle\}
 \]
  contains $k^*$. 

\item\label{L:grad-bds} If furthermore $\dim_k \langle \nabla f\rangle=n$, then 
$\nabla f \in \Grass(n, S_d)$ is decomposable. 
\end{enumerate}
\end{lemma}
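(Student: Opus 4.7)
The plan is to follow the template of Lemma~\ref{L:gradient-ds}, now using the LDS decomposition
$f = \sum_{i=1}^\ell x_i\,\partial H/\partial x_{\ell+i} + G(x_{\ell+1},\dots,x_n)$
and the induced splitting $V = U\oplus W$ with $U=\langle x_1,\dots,x_\ell\rangle$ and $W=\langle x_{\ell+1},\dots,x_n\rangle$. I would start with part (3), since the structural picture it provides guides the other parts. A direct computation of partials gives $\partial f/\partial x_i = \partial H/\partial x_{\ell+i}\in \Sym^d W$ for $i\le\ell$ and $\partial f/\partial x_j = \partial G/\partial x_j\in \Sym^d W$ for $j>2\ell$, whereas $\partial f/\partial x_{\ell+j} = \sum_i x_i\,\partial^2 H/\partial x_{\ell+i}\partial x_{\ell+j} + \partial G/\partial x_{\ell+j} \in \Sym^d V$ for $1\le j\le\ell$. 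Let $L_1$ be the span of the first two families (sitting inside $\Sym^d W$) and $L_2$ the span of the middle partials (sitting inside $\Sym^d V$); then $\langle\nabla f\rangle = L_1 + L_2$. Under $\dim\langle\nabla f\rangle=n$, all $n$ partials are independent, forcing $\dim L_1 = n-\ell = \dim W$ and $\dim L_2 = \ell$, which exhibits $\langle\nabla f\rangle$ as decomposable with $U_{\mathrm{dec}} := W$.

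For part (2), I consider the affine family $f_\lambda := f + \lambda H$, $\lambda\in k$. Since $H$ depends only on $x_{\ell+1},\dots,x_{2\ell}$, the partials of $f_\lambda$ agree with those of $f$ in the variables $x_i$ for $i\le\ell$ or $i>2\ell$, while
$\partial f_\lambda/\partial x_{\ell+j} = \partial f/\partial x_{\ell+j} + \lambda P_j$,
where $P_j := \partial H/\partial x_{\ell+j} = \partial f/\partial x_j \in \langle\nabla f\rangle$. Thus $\langle\nabla f_\lambda\rangle \subseteq \langle\nabla f\rangle$, and running the argument in reverse (using $\partial f/\partial x_{\ell+j} = \partial f_\lambda/\partial x_{\ell+j} - \lambda P_j$) yields the opposite inclusion. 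Outside the degenerate possibility $H\propto f$ (which forces $f$ to depend only on $x_{\ell+1},\dots,x_{2\ell}$), the classes $\bar f_\lambda$ for $\lambda\in k^*$ are pairwise non-proportional, producing the desired $k^*$-family in $\{\bar g : \langle\nabla g\rangle = \langle\nabla f\rangle\}$.

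For part (1), the natural analogue of the direct-sum argument is to take the one-parameter subgroup $\rho$ of $\SL(V)$ acting with weight $n-\ell$ on $U$ and weight $-\ell$ on $W$ (rescaled for integrality). The partials in $\Sym^d W$ are then $\rho$-weight vectors of common weight $-d\ell$, while each middle partial $\partial f/\partial x_{\ell+j}$ splits under the bigrading $\Sym^d V = \bigoplus_{a+b=d}\Sym^a U \otimes \Sym^b W$ into two pieces of distinct $\rho$-weight: the $(1,d-1)$-piece $\sum_i x_i\,\partial^2 H/\partial x_{\ell+i}\partial x_{\ell+j}$ and the $(0,d)$-piece $\partial G/\partial x_{\ell+j}$. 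The main obstacle is that $\rho$-stability of $\langle\nabla f\rangle$ reduces to the requirement $\partial G/\partial x_{\ell+j}\in \langle\nabla f\rangle$ for every $j$, a condition not automatic from \eqref{E:lds}. I would try to secure it by exploiting the freedom in the LDS presentation under linear change of coordinates, or else replace $\rho$ by the unipotent subgroup $u_c\in\SL(V)$ defined by $x_i\mapsto x_i + c\,x_{\ell+i}$ for $i\le\ell$ (other variables fixed): Euler's identity applied to the degree-$d$ form $P_j$ yields $u_c\cdot\partial f/\partial x_{\ell+j} = \partial f/\partial x_{\ell+j} + c\,d\,P_j \in \langle\nabla f\rangle$, and Euler applied to $H$ gives $u_c\cdot f = f + c(d+1)\,H$, moving $\bar f$ along the family from (2). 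In either case, once invariance of $\langle\nabla f\rangle$ is verified, the distinct $\rho$-weights of the summands $\sum_i x_i P_i$ and $G$ of $f$ force $\rho\cdot\bar f\neq\bar f$, and the consequent strict inclusion $\Stab_{\SL(V)}(\bar f)\subsetneq\Stab_{\SL(V)}(\langle\nabla f\rangle)$ follows at once.
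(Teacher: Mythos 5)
Your parts (2) and (3) are correct, and in fact your treatment of (2) is more careful than the paper's. The paper disposes of (1) and (2) in one line ``by noting that $\langle \nabla f\rangle$ does not change if we multiply $H$ by an element of $k^*$,'' i.e.\ it uses the family $f^{(\mu)}=\mu\sum_i x_i\,\partial H/\partial x_{\ell+i}+G$ together with the diagonal $\rho$ of weights $(n-\ell,\dots,-\ell,\dots)$. But rescaling $H$ changes the middle partials by $(1-\mu)\,\partial G/\partial x_{\ell+j}$, so the paper's assertion is equivalent to $\partial G/\partial x_{\ell+j}\in\langle\nabla f\rangle$ for all $j$ --- exactly the obstacle you isolate, and it genuinely fails: for $f=3x_1x_2^2+x_2^2x_3+x_3^3$ (LDS with $\ell=1$, $H=x_2^3$, $G=x_2^2x_3+x_3^3$) one has $\langle\nabla f\rangle=\langle x_2^2,\,x_3^2,\,3x_1x_2+x_2x_3\rangle$, which contains neither $\partial G/\partial x_2=2x_2x_3$ nor $\rho(t)\cdot(3x_1x_2+x_2x_3)=3tx_1x_2+t^{-2}x_2x_3$ for $t^3\neq 1$. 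Your additive family $f_\lambda=f+\lambda H$ avoids this entirely, since the middle partials change by $\lambda\,\partial H/\partial x_{\ell+j}=\lambda\,\partial f/\partial x_j\in\langle\nabla f\rangle$; this is the right argument for (2), and it is what the paper should have said.

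The genuine gap in your proposal is part (1): the statement demands a one-parameter subgroup $\rho\colon\GG_m\hookrightarrow\SL(V)$, and neither of your two escape routes delivers one. The unipotent elements $u_c\colon x_i\mapsto x_i+cx_{\ell+i}$ do stabilize $\langle\nabla f\rangle$ and move $\bar f$ (your Euler computations are correct, modulo the characteristic dividing $d+1$), so they prove the ``Consequently'' clause $\Stab_{\SL(V)}(\bar f)\subsetneq\Stab_{\SL(V)}(\langle\nabla f\rangle)$; but a $\GG_a$ is not a $\GG_m$, and in characteristic $0$ no nontrivial $\GG_m$ sits inside a unipotent group, so the first assertion of (1) is not established this way. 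Your other suggestion --- exploiting the freedom in the LDS presentation --- is not carried out, and it is not clear it can be: in the example above a stabilizing torus does exist (weights $(1,-2,1)$), but it is not the torus attached to any visible LDS splitting, and I do not see a uniform construction. To be fair, the paper's own proof of (1) rests on the same false rescaling claim, so you have located a real defect in the source rather than merely failed to reproduce its argument; but as a proof of the lemma as stated, part (1) remains open in your write-up.
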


\begin{proof} Suppose
\begin{equation}
f=\sum_{i=1}^a x_i \frac{\partial H(x_{a+1}, \dots, x_{2a})}{\partial x_{a+i}} + G(x_{a+1},\dots, x_n),
\end{equation} 
where $H$ and $G$ are non-zero degree $d+1$ forms, in $a$ and $n-a$ variables, respectively.
Let $\rho$ be the one-parameter subgroup of $\SL(V)$ 
acting with weight $(n-a)/\gcd(n-a,a)$ on the variables $\{x_i\}_{i=1}^a$
and weight $-a/\gcd(n-a,a)$ on $\{x_i\}_{i=a+1}^n$. Since the weights are of opposite sign
and coprime, $\rho$ is a non-trivial subgroup of $\SL(V)$ defined over $k$. 
The proof of (1) and (2) now proceeds as in the proof of 
Lemma \ref{L:gradient-ds} by noting that $\langle \nabla f\rangle$ does 
not change if we multiply $H$ by an element of $k^*$.

Finally (3) follows from the fact that 
\begin{equation*}%\label{E:nabla-qd}
\langle \nabla H\rangle = \langle \partial f/\partial x_1, \dots, \partial f/\partial x_{a} \rangle 
\subset \langle \nabla f \rangle \ \text{and} \ \langle \nabla H\rangle \subset k[x_{a+1},\dots, x_{2a}].
\end{equation*}

\begin{comment}
Let $\rho$ be the one-parameter subgroup of $\SL(V)$ 
acting with weight $(n-a)/\gcd(n-a,a)$ on $\{x_i\}_{i=1}^a$
and weight $-a/\gcd(n-a,a)$ on $\{x_i\}_{i=a+1}^n$. Then $\langle\nabla f\rangle$
is invariant under $\rho$, and that $f$ is not $\rho$-invariant because
$G \neq 0$. 

Then 
\begin{equation}\label{E:nabla-qd}
\langle \nabla H\rangle \subset =\langle \partial f/\partial x_1, \dots, \partial f/\partial x_{\ell} 
\subset \langle \nabla f \rangle.
\end{equation}
is decomposable. 
\end{comment}

\end{proof}

\section{Direct sums and non-injectivity of $\nabla$} 
\label{S:injectivity}
Throughout this section, we work under:
\begin{assumption}\label{a-2} The ground field $k$ 
is algebraically closed and $\chark(k) \nmid (d+1)!$.
\end{assumption}
We explore the relationship between direct sum decomposability 
of a concise form $f\in S_{d+1}$ (that is, the form defining a hypersurface which is not a cone in 
$\PP^{n-1}$) and the non-injectivity of the gradient morphism (see Definition \ref{D:gradient})
\[
\nabla \colon \PP (S_{d+1})^{c} \to \Grass(n, S_d)
\] 
at the point
$\bar f \in \PP(S_{d+1})$. Our main result is the following complete characterization 
of concise forms that are uniquely determined by their gradient points, or, equivalently,
by their Jacobian ideals.
\begin{theorem}\label{T:injectivity-direct-sum} (A) Suppose $f\in S_{d+1}$ is a concise form. 
Then the following are equivalent:
\begin{enumerate}
\item $f$ is either a direct sum or an LDS form.
\item The morphism $\nabla$ is not injective at $\bar f$; that is, there exists $\bar g \neq \bar f \in \PP(S_{d+1})^{c}$ such that $\nabla f=\nabla g$.
\item The morphism $\nabla$ has positive fiber dimension at $\bar f$; that is, $\nabla^{-1}(\nabla \bar f)$ 
has dimension $\geq 1$ at $\bar f$.
\end{enumerate}
(B) In particular, if $f\in S_{d+1}$ is a GIT semistable form with respect to the standard $\SL(n)$-action,
then the following are equivalent:
\begin{enumerate}
\item $f$ is a direct sum.
\item The morphism $\nabla$ is not injective at $\bar f$.
%; that is, there exists $\bar g \neq \bar f \in \PP(S_{d+1})^{c}$ such that $\nabla f=\nabla g$.
\item The morphism $\nabla$ has positive fiber dimension at $\bar f$.%; 
%that is, $\nabla^{-1}(\nabla \bar f)$ has dimension $\geq 1$ at $\bar f$.
\end{enumerate}
\end{theorem}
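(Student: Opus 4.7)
The plan is to establish, for part (A), the cycle $(1)\Rightarrow (3)\Rightarrow (2)\Rightarrow (1)$, and then deduce part (B) from (A) via Lemma \ref{L:lds}. The implication $(1)\Rightarrow (3)$ is contained in Lemmas \ref{L:gradient-ds}(2) and \ref{L:gradient-lds}(2): if $f=f_1+f_2$ is a direct sum, or $f$ is LDS, the one-parameter families produced there lie in the fiber $\nabla^{-1}(\nabla\bar f)$ and are pairwise non-proportional, giving positive fiber dimension at $\bar f$. The implication $(3)\Rightarrow (2)$ is trivial.

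The main content is $(2)\Rightarrow (1)$. Suppose $\bar g\neq \bar f$ in $\PP(S_{d+1})^c$ satisfies $\langle\nabla g\rangle = \langle\nabla f\rangle$. Since $f$ is concise, there is a unique $M\in \GL_n(k)$ with $\partial g/\partial x_i = \sum_j M_{ij}\,\partial f/\partial x_j$. Equating the mixed partials of $g$ yields the symmetry relation
\[
MH_f = H_f M^T,
\]
where $H_f$ denotes the Hessian matrix of $f$. If $M=\lambda I_n$ were a scalar matrix, Euler's identity (applicable since $\chark(k)\nmid d+1$) would force $g=\lambda f$, contradicting $\bar g\neq \bar f$; so $M\notin k\cdot I_n$. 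Since $k$ is algebraically closed, a linear change of variables $x\mapsto Px$ conjugates $M$ to Jordan form; under this change, $(M,H_f)$ transforms to $(P^{-T}MP^T,P^{-T}H_fP^{-1})$, preserving the symmetry relation.

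The argument now splits according to the spectrum of $M$. If $M$ has at least two distinct eigenvalues, decompose $V=V_1\oplus V_2$ into sums of generalized eigenspaces for disjoint spectra, so $M=\mathrm{diag}(M_1,M_2)$. The off-diagonal block of $MH_f=H_fM^T$ is the Sylvester equation $M_1 B_{12} = B_{12} M_2^T$; applied entry-wise over $S_{d-1}$, it has only the trivial solution $B_{12}=0$ because $\mathrm{spec}(M_1)\cap\mathrm{spec}(M_2^T)=\emptyset$. Hence $H_f$ is block-diagonal with respect to $V=V_1\oplus V_2$, and integrating the vanishing mixed partials (using $\chark(k)\nmid (d+1)!$) recovers $f=f_1(V_1)+f_2(V_2)$ as a direct sum. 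If instead $M=\lambda I_n + N$ with $N\neq 0$ nilpotent, then $N$ also satisfies $NH_f=H_fN^T$; a careful inspection of the entries of $H_f$ in a basis adapted to the Jordan block structure of $N$ forces $H_f$ into a \emph{Hankel-type} shape, and antidifferentiating produces an LDS presentation \eqref{E:lds} for $f$, with $\ell$ and the variable partition read off from the Jordan data of $N$. Part (B) then follows from (A) upon noting that any non-concise form is a cone and therefore GIT unstable (so a semistable $f$ is automatically concise), while Lemma \ref{L:lds} rules out the LDS alternative among semistable forms; this collapses (1) of (A) to ``$f$ is a direct sum''.

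The main obstacle I anticipate is the nilpotent subcase of $(2)\Rightarrow (1)$: extracting the LDS data (the integer $\ell$, the partition of variables, and the forms $H$ and $G$) from the Jordan decomposition of $N$, especially when $N$ has Jordan blocks of mixed sizes, will require careful combinatorial bookkeeping of the Hessian constraints and explicit reconstruction of $f$ by iterated antidifferentiation in the adapted basis.
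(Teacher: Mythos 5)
Your overall strategy is the paper's: reduce to the statement that $\langle\nabla g\rangle\subset\langle\nabla f\rangle$ forces $g=\lambda f$, or $f$ a direct sum, or $f$ LDS (Proposition \ref{P:benson}), and run Benson's Jordan-normal-form argument on the matrix $M$ relating the two gradients. Your handling of the multi-eigenvalue case via the Sylvester equation $M_1B_{12}=B_{12}M_2^{T}$ on the off-diagonal Hessian block is correct and in fact a clean repackaging of the paper's Claim 2 (and it lets you conclude ``direct sum'' even when the blocks are non-diagonal, whereas the paper sends every non-diagonal $M$ to the LDS case); the reductions $(1)\Rightarrow(3)\Rightarrow(2)$ and the deduction of (B) from (A) via Lemma \ref{L:lds} also match the paper.

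The genuine gap is the unipotent case $M=\lambda I_n+N$, $N\neq 0$ nilpotent, which is the heart of the implication $(2)\Rightarrow(1)$ and which you explicitly defer. Asserting that $NH_f=H_fN^{T}$ forces $H_f$ into a ``Hankel-type shape'' from which an LDS presentation can be antidifferentiated is not yet an argument, and the shape claim is not the hard part. What actually has to be extracted from the relations (the paper's \eqref{E:2nd-zero}--\eqref{E:mixed}) is: (i) that $f$ is \emph{linear} in the lead variables $x_{11},\dots,x_{\ell 1}$ of the Jordan blocks of size $\geq 2$, so $f=\sum_j x_{j1}f_j+G$ with $f_j=\partial f/\partial x_{j1}$; (ii) that each $f_j$ involves only the tail variables $x_{1m_1},\dots,x_{\ell m_\ell}$; and, crucially, (iii) the integrability step: the symmetry $\partial f_j/\partial x_{j'm_{j'}}=\partial f_{j'}/\partial x_{jm_j}$ must be verified and then used, via a Poincar\'e-lemma argument for polynomials (valid because $\chark(k)\nmid(d+1)!$), to produce a single potential $H$ with $f_j=\partial H/\partial x_{jm_j}$. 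Without step (iii) you do not obtain the specific shape \eqref{E:lds} of an LDS form --- you only get $f=\sum_j x_{j1}f_j+G$ with the $f_j$ in the right variables, which is strictly weaker. Since the conclusion ``direct sum or LDS'' (and hence, via Lemma \ref{L:lds}, the semistable case (B)) hinges on landing exactly in the class \eqref{E:lds}, the proof is incomplete until this reconstruction of $H$, with the bookkeeping for Jordan blocks of mixed sizes (including size-$1$ blocks of the same eigenvalue), is carried out.
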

Partial results along these lines were known earlier. In a 1983 paper \cite[\S4]{benson}, 
Max Benson proved this result on the locus
of smooth hypersurfaces. This was subsequently rediscovered several times,
for example in \cite[Lemma 3]{ueda} and \cite[Corollary 1.3]{wang}. 
In a more recent \cite[Theorem 1.1]{wang}, 
Wang proved that if $\nabla$ fails to be injective at $\bar f$,
then either $f$ is a direct sum or $f$ has a point of multiplicity $\deg(f)-1$. Note that all LDS forms
of degree $d+1$
have a point of multiplicity $d$ and are GIT non-semistable by Lemma \ref{L:lds}, but not all degree $d+1$ hypersurfaces with multiplicity $d$ points are GIT non-semistable. This shows that
Wang's result is slightly weaker
than our Theorem \ref{T:injectivity-direct-sum}. In fact, some applications (such as, for example,
in \cite{fedorchuk-isaev-smoothness}) require our stronger formulation.  

The key to our proof of Theorem \ref{T:injectivity-direct-sum} is Proposition \ref{P:benson},
which is based on an idea of Benson appearing in \cite[Proposition 4.1]{benson}
that was independently discovered
and communicated to me by Alexander Isaev. We note that Wang's result is proved by
the same method.  We obtain our sharp result by pushing Benson's method to its full logical
conclusion. 

\begin{proof}[{Proof of Theorem \ref{T:injectivity-direct-sum}}] 
We note that Part (B) follows immediately from Part (A) and Lemma \ref{L:lds}. We
proceed to prove Part (A).

Since $\nabla f=\nabla g$ implies that $\nabla (\lambda f+\mu g)=\nabla f$ for a general 
$[\lambda : \mu]\in \PP^1_{k}$, we have $(2)\Rightarrow (3)$, while the reverse implication is 
obvious.\footnote{Note however that over non-algebraically closed fields, 
$\nabla$ can be injective on $k$-points,
but still have positive fiber dimension at some of them. An example is given by $f=x^{d+1}+y^{d+1}\in  
\mathbb{F}_2[x,y]_{d+1}$.}

That $(1)\Rightarrow (2)$ when $f$ is a direct sum (respectively, a concise LDS form)
follows from Lemma \ref{L:gradient-ds}(2) (respectively, Lemma \ref{L:gradient-lds}(2)).

At last, we prove the implication $(2)\Rightarrow (1)$ in Proposition \ref{P:benson} below,
thus finishing the proof of the theorem.
\end{proof}

The following slightly more general result does not  
require the conciseness assumption.
\begin{prop}
\label{P:benson} Keep Assumption \ref{a-2}.
Suppose $f, g\in S_{d+1}$ are arbitrary nonzero forms that satisfy 
\begin{equation}
\left\langle \nabla g\right\rangle  \subset
\left\langle\nabla f\right\rangle,
\end{equation}
or, equivalently, $J_g \subset J_f$.
%\begin{equation}
%\left\langle \partial g/\partial x_1, \dots, \partial g/\partial x_n\right\rangle  \subset
%\left\langle \partial f/\partial x_1, \dots, \partial f/\partial x_n\right\rangle.  
%\end{equation}
Then either $g=\lambda f$, where $\lambda\in k$, or 
$f$ is LDS, or $f$ is a direct sum.
\end{prop}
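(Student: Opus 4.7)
The plan is to encode the inclusion $\langle\nabla g\rangle\subset \langle\nabla f\rangle$ by a matrix $A=(a_{ij})\in \operatorname{Mat}_n(k)$ with $\partial g/\partial x_i=\sum_j a_{ij}\,\partial f/\partial x_j$, and then to read off the structure of $f$ from the Jordan canonical form of $A$. Two identities will drive the argument: Euler's formula together with $\chark(k)\nmid (d+1)!$ yields $(d+1)g=\sum_{i,j}a_{ij}x_i\,\partial f/\partial x_j$, while the symmetry of mixed second partials of $g$ is equivalent to the matrix identity $AH_f=H_fA^{T}$, where $H_f=\bigl(\partial^2 f/(\partial x_i\partial x_j)\bigr)$ is the Hessian matrix of $f$. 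A linear change of coordinates conjugates $A$, so I may place $A$ in Jordan normal form over the algebraically closed field $k$.

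If $A$ has at least two distinct eigenvalues, the generalized eigenspace decomposition $V=\bigoplus_\mu V_\mu^{\mathrm{gen}}$ makes $A$ block-diagonal, and the off-diagonal blocks of $AH_f=H_fA^{T}$ are Sylvester equations $A_\mu H_{\mu\nu}=H_{\mu\nu}A_\nu^{T}$ whose only solution is $H_{\mu\nu}=0$, since $A_\mu$ and $A_\nu$ have disjoint spectra. All mixed second partials of $f$ across distinct eigenspaces thus vanish, and by the characteristic hypothesis $f=\sum_\mu f_\mu$ splits as a sum of forms in the respective eigenspace variables. Either two or more summands are nonzero and $f$ is a direct sum, or else $f$ lives in a single eigenspace and $\partial g/\partial x_i=\mu\,\partial f/\partial x_i$ forces $g=\mu f$. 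I may therefore assume $A$ has a single eigenvalue $\lambda$; substituting $g\leadsto g-\lambda f$ makes $A$ nilpotent, and the case $A=0$ immediately yields $g=\lambda f$.

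The main obstacle is the remaining case where $A$ is a nonzero nilpotent matrix, which I would handle by induction on the nilpotent index $M$ of $A$. For the base case $M=2$, I choose a Jordan basis partitioning the variables into $r_2\geq 1$ length-$2$ chains and possibly some length-$1$ chains. The gradient relations then show that $g$ depends only on the $r_2$ first-of-chain variables of the length-$2$ chains, that the top partials $P_\alpha:=\partial f/\partial y_2^{(\alpha)}$ are polynomials in those same variables, and that the curl-free condition $\partial P_\alpha/\partial y_1^{(\beta)}=\partial P_\beta/\partial y_1^{(\alpha)}$ follows from the symmetry of the Hessian of $g$. Therefore $P_\alpha=\partial H/\partial y_1^{(\alpha)}$ for some degree-$(d{+}1)$ form $H$ in $r_2$ variables, exhibiting $f$ as LDS with $\ell=r_2$. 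For the inductive step $M\geq 3$, I would consider the auxiliary form $g^{(M-1)}:=(d+1)^{-1}\sum_{i,j}(A^{M-1})_{ij}x_i\,\partial f/\partial x_j$, which satisfies $\nabla g^{(M-1)}=A^{M-1}\nabla f\subset \langle\nabla f\rangle$ and corresponds to the nonzero nilpotent matrix $A^{M-1}$ of nilpotent index $2$. If $g^{(M-1)}\neq 0$, applying the base case to $(f,g^{(M-1)},A^{M-1})$ concludes that $f$ is LDS, because the alternative $g^{(M-1)}=\mu f$ with $\mu\neq 0$ is excluded (nilpotent derivations have only the zero eigenvalue). If instead $g^{(M-1)}=0$, the rows of $A^{M-1}$ force $\partial f/\partial y_M^{(\alpha)}=0$ for every chain $\alpha$ of maximal length $M$, so $f$ lives on the $A$-invariant subspace $V_0\subset V$ of codimension equal to the number of longest chains, where $A|_{V_0}$ has nilpotent index $M-1$ and the induction hypothesis applies, with any LDS or direct-sum presentation obtained on $V_0$ extending to $V$ by absorbing the removed coordinates into the $G$-summand.
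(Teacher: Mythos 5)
Your proof is correct in substance and shares the paper's skeleton --- encode $\langle\nabla g\rangle\subset\langle\nabla f\rangle$ by a matrix, put it in Jordan form, and read off the structure of $f$ from the symmetry of second partials --- but the two halves are organized differently. For the semisimple part, the paper only treats a diagonal matrix with distinct eigenvalues, whereas your Sylvester-equation argument $A_\mu H_{\mu\nu}=H_{\mu\nu}A_\nu^{T}$ kills the mixed Hessian blocks across arbitrary generalized eigenspaces; this is cleaner and slightly stronger (you get a direct sum even when the blocks are non-semisimple, where the paper's case division would instead route through the LDS conclusion). For the non-diagonal part, the paper handles Jordan blocks of arbitrary size in one shot: the chain relations yield the vanishing $\partial^2 f/\partial x_{j1}\partial x_{j't}=0$ and the curl identity directly, with the LDS presentation built from the first and last variables of each block of size at least two. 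Your induction on the nilpotency index, reducing to index $2$ via $g^{(M-1)}=\frac{1}{d+1}\sum_{i,j}(A^{M-1})_{ij}x_i\,\partial f/\partial x_j$, is a genuinely different reduction; it does work, but the identity $\nabla g^{(M-1)}=A^{M-1}\nabla f$ is not formal --- one has to iterate the consequence $\sum_{i,j}a_{ij}x_i\,\partial^2 f/\partial x_j\partial x_k=d\,(A\nabla f)_k$ of the hypothesis --- so that step deserves an explicit computation rather than an assertion.

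Two small repairs. In the multi-eigenvalue case, when exactly one summand $f_\mu$ is nonzero you assert $g=\mu f$; that is false when $A_\mu$ is a non-scalar Jordan block, where $\partial g/\partial x_i=\mu\,\partial f/\partial x_i+\partial f/\partial x_{i-1}$. The correct conclusion in that sub-case is simply that $f$ is non-concise (it omits the variables of the other eigenspaces), hence LDS by the convention in the Remark following the Proposition. The same shortcut collapses your $g^{(M-1)}=0$ branch: $A^{M-1}\nabla f=0$ already forces the vanishing of one first partial of $f$ per maximal chain, so $f$ is non-concise and therefore LDS, and no further induction on $V_0$ is needed.
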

\begin{remark} We regard any non-concise form as an instance of an LDS form 
obtained by setting $H=0$ in \eqref{E:lds}.
\end{remark}

\begin{proof} Following Benson \cite[\S4]{benson}, we note 
that there exists a basis $x_1,\dots, x_n$ of $V$ such that  
\begin{equation}\label{E:jnf}
\left(\begin{matrix} \dfrac{\partial g}{\partial x_1} \\ \vdots \\ \dfrac{\partial g}{\partial x_n}\end{matrix}\right)
=M\left(\begin{matrix} \dfrac{\partial f}{\partial x_1} \\ \vdots \\ \dfrac{\partial f}{\partial x_n}\end{matrix}\right),
\end{equation}
where $M$ is an $n\times n$ matrix in the Jordan normal form. If 
$M=\lambda \mathbb{I}_n$, where $\lambda\in k$, is a scalar multiple of the identity matrix, 
then by the Euler's formula and the assumption that $\chark(k)\nmid (d+1)!$, we have that $g=\lambda f$,
and we are done.

It remains to consider two cases: either $M$ has a Jordan block of size $>1$, or $M$ is diagonal
with at least two distinct eigenvalues. 

\begin{claim} If $M$ is not diagonal, then $f$ is LDS.
\end{claim}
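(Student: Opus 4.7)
The plan is to extract the LDS normal form of $f$ from the existence of a nontrivial Jordan block in $M$. Say $M$ has a Jordan block of size $m\geq 2$ and eigenvalue $\lambda$, occupying rows and columns $1,\ldots,m$ in the given basis. Replacing $g$ with $g-\lambda f$ (which preserves the hypothesis $\langle\nabla g\rangle\subset\langle\nabla f\rangle$) reduces to the case $\lambda=0$, after which this block contributes the relations
\[
\frac{\partial g}{\partial x_i}=\frac{\partial f}{\partial x_{i+1}}\quad (1\le i\le m-1),\qquad \frac{\partial g}{\partial x_m}=0.
\]
In particular $g$ is independent of $x_m$, while the other rows of $M$ contribute relations of the form $\partial g/\partial x_j=\mu_j\partial f/\partial x_j$ (with an extra off-diagonal term if $x_j$ happens to sit in another Jordan block of size $\geq 2$).

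The main engine of the proof is the identity $\partial^2 g/\partial x_i\partial x_j=\partial^2 g/\partial x_j\partial x_i$. Pairing $x_m$ against $x_2,\ldots,x_m$ and using $\partial g/\partial x_m=0$, one obtains $\partial^2 f/\partial x_j\partial x_m=0$ for $j=2,\ldots,m$, so that $\partial f/\partial x_m$ depends only on $x_1$ and on $x_j$ with $j>m$. Iterating this kind of argument to the remaining pairs of variables and incorporating the relations coming from the other Jordan blocks of $M$, one derives a cascade of vanishings among the second partials of $f$. Setting $\ell=m-1$, these vanishings precisely say that $\partial f/\partial x_2,\ldots,\partial f/\partial x_m$ live in a polynomial subring in a disjoint set of $\ell$ variables, and that they are the partial derivatives of a single form which, up to a constant of integration, equals $g$. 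A linear change of variables---essentially swapping the ``Jordan-first'' and ``Jordan-last'' variables of the block, as the toy case $n=m=2$ already suggests---then realizes the LDS presentation \eqref{E:lds}, with $H$ obtained from $g$ and $G$ collecting the remaining terms of $f$. Matching coefficients via the Euler identity $(d+1)f=\sum_i x_i\,\partial f/\partial x_i$ completes the verification.

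The main technical obstacle I anticipate is the bookkeeping required to show that all the cross-partial vanishings assemble coherently. Concretely, one needs to verify that the ``$H$-variables'' form a disjoint block of exactly $\ell$ variables and that a single polynomial $H$ of degree $d+1$ serves as antiderivative for all of $\partial f/\partial x_1,\ldots,\partial f/\partial x_\ell$ simultaneously. The interaction between the chosen Jordan block and the remaining Jordan structure of $M$ is where this delicate point lies; fortunately, the non-concise case is handled automatically by the remark preceding the proof, where one sets $H=0$ in \eqref{E:lds}.
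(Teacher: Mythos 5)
Your setup is on the right track and matches the paper's method in spirit: pass to the Jordan relations, use equality of mixed partials of $g$ to force vanishing of certain second partials of $f$, and read off the LDS presentation. The first deduction (that $\partial^2 f/\partial x_j\partial x_m=0$ for $j=2,\dots,m$, so $\partial f/\partial x_m$ involves only $x_1$ and the variables outside the block) is correct. But the structural conclusion you draw from it is wrong: setting $\ell=m-1$ and asserting that $\partial f/\partial x_2,\dots,\partial f/\partial x_m$ all lie in a disjoint set of $m-1$ variables and are the partials of a single form is not what the vanishings give, and is false in general. The commutativity relations only force $f$ to be \emph{linear in one variable per Jordan block} (your $x_m$; the paper's $x_{j1}$), not in $m-1$ variables of a single block: for instance, with a single nilpotent $3\times 3$ block one gets $\partial^2 f/\partial x_2^2=\partial^2 f/\partial x_1\partial x_3$, which need not vanish. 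Concretely, $f=x_3x_1^2+x_2^2x_1+ax_1^3$ and $g=x_2x_1^2$ satisfy $\partial g/\partial x_3=0$, $\partial g/\partial x_2=\partial f/\partial x_3$, $\partial g/\partial x_1=\partial f/\partial x_2$ (a single size-$3$ nilpotent block), yet $f$ is LDS with $\ell=1$ (linear only in $x_3$, with $H=\tfrac13x_1^3$), not $\ell=2$; indeed $\ell=m-1$ is impossible here since \eqref{E:lds} requires $2\ell\leq n$ and $2(m-1)>m=n$.

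The correct bookkeeping, which is the actual content of the claim, is this: $\ell$ must be taken to be the \emph{number of Jordan blocks with eigenvalue $\lambda$ and size at least $2$}, all of which must be treated simultaneously. The linear variables of the LDS presentation are the distinguished end-variables of these blocks (one per block), and the $H$-variables are the opposite end-variables (again one per block). Your one-block-at-a-time analysis misses the essential interaction between distinct blocks with the \emph{same} eigenvalue: the mixed partials $\partial^2 f/\partial x_{j1}\partial x_{j'm_{j'}}$ coupling two such blocks do \emph{not} vanish; they only satisfy the symmetry \eqref{E:mixed}, and it is precisely this symmetry (an exactness condition) that lets one integrate the $\ell$ coefficient forms to a single $H$ in $\ell$ variables. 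Your reduction to $\lambda=0$ via $g\mapsto g-\lambda f$ is a harmless and pleasant simplification, and the non-concise case is indeed handled by the remark preceding the proof, but as written the proposal does not establish the claim.
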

\begin{proof}
Let $\lambda$ be any eigenvalue of $M$ with a Jordan block of size greater
than $1$. We assume that $J_1, \dots, J_\ell$ are the Jordan blocks with eigenvalue $\lambda$
of sizes $m_1,\dots, m_{\ell} \geq 2$ and that all other Jordan blocks $J_{\ell+1},\dots, J_{r}$,
of sizes $m_{\ell+1}, \dots, m_{r}$, respectively,
have either size $1$ or eigenvalue different from $\lambda$.

Double-index the variables so that 
\[
\{x_1,\dots,x_n\}=\{x_{j1},\dots, x_{jm_{j}}\}_{j=1}^{r}
\]
where $\{x_{j1},\dots, x_{jm_{j}}\}$ correspond to the Jordan block $J_{j}$ of size $m_j$. 

For each $j=1,\dots,\ell$, we suppose that  
\[
J_{j} = \left(\begin{matrix} \lambda & 0 & 0 &  \cdots \\ 
1 & \lambda & 0 & \cdots \\ \vdots & \ddots & \ddots & \vdots
 \\ \cdots & \cdots & 1 & \lambda  \end{matrix}\right).
\]
Using \eqref{E:jnf}, we then have
\begin{equation}\tag{$E_j$}
\begin{aligned}
\frac{\partial g}{\partial x_{j1}} &=\lambda \frac{\partial f}{\partial x_{j1}}, \\
\frac{\partial g}{\partial x_{j2}} &=\lambda \frac{\partial f}{\partial x_{j2}}+ \frac{\partial f}{\partial x_{j1}}, \\
\vdots & \\
\frac{\partial g}{\partial x_{jm_{j}}} &=\lambda \frac{\partial f}{\partial x_{jm_{j}}}+ \frac{\partial f}{\partial x_{jm_{j}-1}}.
\end{aligned}
\end{equation}
After passing to the second partials, and some elementary algebraic manipulations, we see that 
\begin{align}
%\frac{\partial^2 f}{\partial x_{j1}^2}=0, 
\frac{\partial^2 f}{\partial x_{j1}\partial x_{j't}}&=0, \ \text{for all 
$1\leq j, j' \leq \ell$, and $t=1,\dots, m_{j'}-1$}, \label{E:2nd-zero} \\
\frac{\partial^2 f}{\partial x_{j1}\partial x_{j't}}&=0, \ \text{for all $1\leq j\leq \ell<j'\leq r$ 
and all $t=1,\dots, m_{j'}$}, 
\label{E:2nd-zero-2} 
\\
\frac{\partial^2 f}{\partial x_{j1}\partial x_{j'm_{j'}}}&=\frac{\partial^2 f}{\partial x_{j1}\partial x_{j''m_{j''}}},
\ \text{for all $1\leq j, j', j'' \leq \ell$.} \label{E:mixed}
\end{align} 
Equation \eqref{E:2nd-zero} with $t=1$ implies that for all $1\leq j, j'\leq \ell$, we have
\[
\frac{\partial^2 f}{\partial x_{j1}\partial x_{j'1}}=0.
\]
It follows that $f$ is linear in the variables $x_{11}, \dots, x_{\ell 1}$, and so we can write
\[
f=x_{11}f_1+x_{21}f_2+\cdots+x_{\ell 1}f_{\ell}+G,
\]
where $f_{1},\dots,f_{\ell}, G$ do not involve $x_{11},\dots,x_{\ell 1}$. Moreover, we have that
\[
f_{j}=\frac{\partial f}{\partial x_{j1}},  \quad {j=1,\dots, \ell}.
\]
Equations \eqref{E:2nd-zero} and \eqref{E:2nd-zero-2} now imply that 
\[
f_1,\dots, f_{\ell} \in k[x_{jm_j}]_{j=1}^{\ell}.
\]
Next, \eqref{E:mixed} gives for all $1\leq j, j' \leq \ell$ that 
\[
\frac{\partial f_j}{\partial x_{j'm_{j'}}}=\frac{\partial f_{j'}}{\partial x_{j m_{j}}}.
\]
It follows that there exists $H\in k[x_{jm_j}]_{j=1}^{\ell}$ such that 
\[
f_j=\frac{\partial H}{\partial x_{jm_j}}.
\]
Re-indexing the variables so that $x_j=x_{j1}$ and $x_{\ell+j}=x_{jm_j}$ $j=1,\dots, \ell$, we
at last see that 
\[
f=\sum_{j=1}^{\ell} x_j \frac{\partial H(x_{\ell+1},\dots, x_{2\ell})}{\partial x_{j+\ell}}+G(x_{\ell+1},\dots,x_n)
\]
and so $f$ is LDS.
\end{proof}

\begin{claim} 
Suppose $M$ is diagonal. Then either $f$ is non-concise (and hence LDS) or $f$ is a direct sum.
\end{claim}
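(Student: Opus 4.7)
The plan is to exploit the symmetry of mixed second partials together with the diagonality of $M$ to force $f$ to split along the eigenspace decomposition of $M$. Since not all eigenvalues are equal (otherwise we would be in the already-handled scalar case), at least two distinct eigenvalues appear, and this will produce either a genuine direct sum decomposition or a non-conciseness witness.

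Concretely, relabel the diagonal entries so $M=\mathrm{diag}(\lambda_1,\dots,\lambda_n)$ and write the defining relation as $\partial g/\partial x_i = \lambda_i\,\partial f/\partial x_i$ for each $i$. Differentiating the $i$-th equation with respect to $x_j$ and the $j$-th equation with respect to $x_i$, and using that partials commute (here the characteristic hypothesis is painless), one obtains
\[
(\lambda_i-\lambda_j)\,\frac{\partial^2 f}{\partial x_i\,\partial x_j}=0 \quad \text{for all } i,j.
\]
Hence $\partial^2 f/\partial x_i\partial x_j=0$ whenever $\lambda_i\neq \lambda_j$. Partition the index set $\{1,\dots,n\}$ into the fibers $I_1,\dots,I_r$ of the map $i\mapsto \lambda_i$, and set $V_s:=\langle x_i : i\in I_s\rangle$, so that $V=V_1\oplus\cdots\oplus V_r$ with $r\geq 2$ since $M$ is not scalar.

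Using the vanishing of all mixed partials across distinct blocks together with the assumption $\chark(k)\nmid (d+1)!$ (which allows us to integrate: a form whose mixed partials between two groups of variables vanish lies in the sum of the symmetric algebras on those groups), we conclude
\[
f = f_1+f_2+\cdots+f_r, \qquad f_s\in \Sym^{d+1} V_s.
\]
Now there are two cases. If every $f_s$ is nonzero, then since $r\geq 2$, the above is a direct sum presentation of $f$, and we are done. If some $f_s=0$, then $f$ does not involve any of the variables indexed by $I_s$, so the hypersurface $\VV(f)$ is a cone over the corresponding coordinate subspace; equivalently, $E(f)\subsetneq V$ and $f$ fails to be concise. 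By the remark preceding the proof, every non-concise form is regarded as an LDS form (take $H=0$ in \eqref{E:lds}), so this case is also accounted for.

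The main potential obstacle here is only a bookkeeping one: making sure that the integration step from vanishing of mixed partials to the direct sum shape of $f$ is actually justified in positive characteristic. This is why the hypothesis $\chark(k)\nmid(d+1)!$ is essential in Assumption \ref{a-2}; without it one could have nonzero forms all of whose partials with respect to some variables vanish identically, and the reconstruction would fail. Apart from this, the argument is entirely formal and strictly shorter than the Jordan-block case already settled in the preceding claim, so it completes the proof of Proposition \ref{P:benson}, and hence of Theorem \ref{T:injectivity-direct-sum}.
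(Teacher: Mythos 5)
Your argument is correct and essentially identical to the paper's: both derive $(\lambda_i-\lambda_j)\,\partial^2 f/\partial x_i\partial x_j=0$ from the symmetry of mixed partials and the diagonal relations, and then integrate (using $\chark(k)\nmid(d+1)!$) to split $f$ along the eigenspaces of $M$. The only cosmetic difference is that you partition into all eigenvalue fibers while the paper splits off a single eigenvalue against the rest, and you spell out the non-concise case ($f_s=0$) that the paper leaves implicit.
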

\begin{proof}
Let $\lambda$ be an eigenvalue of $M$, and $x_1,\dots, x_a$ are variables such that 
\[
\frac{\partial g}{\partial x_{i}} =\lambda \frac{\partial f}{\partial x_{i}}, \ i=1,\dots, a
\]
and
\[ 
\frac{\partial g}{\partial x_{j}} =\lambda_j \frac{\partial f}{\partial x_{j}}, \ j=a+1,\dots, n,
\]
where $\lambda_j\neq \lambda$ for $j>a$.  
Then $\dfrac{\partial^2 f}{\partial x_{i}x_{j}}=0$ for all $i\leq a$ and $j\geq a+1$. Hence
 $f=f_1(x_1,\dots,x_a)+f_2(x_{a+1},\dots,x_n)$.
 %either $f$ is degenerate or $f$ is a direct sum.
 \end{proof}

This finishes the proof of proposition.
\end{proof}

We obtain a number of consequences of Theorem \ref{T:injectivity-direct-sum} and 
Proposition \ref{P:benson}, that in particular describe the fibers of the gradient morphism.
To state these results, we need one more piece of terminology.
Recall from \cite[\S 3.1]{apolarity} that for a non-degenerate form $f\in S_{d+1}$, a decomposition 
\[
f=f_1+\cdots+f_r,
\]
is called a \emph{maximally fine direct sum decomposition} if 
$V=E(f_1)\oplus \cdots \oplus E(f_r)$, and $f_i$ 
is not a direct sum in $\Sym^{d+1} E(f_i)$,
for all $i=1,\dots, r$.

\begin{corollary}[Fibers of $\nabla$]\label{C:fiber-nabla}
Suppose $f\in S_{d+1}$ is not an LDS form and $f=f_1+\cdots+f_r$ 
is a maximally fine direct sum decomposition. Suppose $g\in S_{d+1}$ is such 
that $\langle\nabla g\rangle\subset \langle\nabla f\rangle$. Then $g\in \langle f_1,\dots, f_r\rangle$. In particular,
for the gradient morphism $\nabla \colon \PP(S_{d+1})^{c} \to \Grass(n, S_d)$, we have
\begin{equation}
\label{E:fiber-nabla}
\nabla^{-1}(\nabla f)=\{\lambda_1 f_1+\lambda_2 f_2+\cdots+\lambda_r f_r\mid \lambda_i\in k^*\}.
\end{equation}
\end{corollary}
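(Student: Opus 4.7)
The plan is to reduce the statement to an iterated application of Proposition \ref{P:benson}, one application for each summand $f_i$ of the maximally fine decomposition, using the mixed partials technique from Lemma \ref{L:gradient} to guarantee that any $g$ with $\langle\nabla g\rangle\subset \langle\nabla f\rangle$ respects the decomposition $V=E(f_1)\oplus\cdots\oplus E(f_r)$.

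First I would fix a basis $x_1,\ldots,x_n$ of $V$ adapted to $V=E(f_1)\oplus\cdots\oplus E(f_r)$, partitioning the indices as $\{1,\ldots,n\}=I_1\sqcup\cdots\sqcup I_r$, so that $f_i\in k[x_l:l\in I_i]_{d+1}$ and $\langle\nabla f_i\rangle\subset k[x_l:l\in I_i]_d$. In particular
\[
\langle\nabla f\rangle=\langle\nabla f_1\rangle\oplus\cdots\oplus\langle\nabla f_r\rangle,
\]
as a genuine direct sum since the summands lie in disjoint variable subrings of $S$.

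Next, given any nonzero $g$ with $\langle\nabla g\rangle\subset \langle\nabla f\rangle$, write for each $j$ the decomposition $\partial g/\partial x_j=\sum_{i=1}^r h_{j,i}$ with $h_{j,i}\in\langle\nabla f_i\rangle\subset k[x_l:l\in I_i]_d$. For $j\in I_i$ and $l\in I_{i'}$ with $i\neq i'$, the equality of mixed partials
\[
\frac{\partial h_{j,i'}}{\partial x_l}=\frac{\partial^2 g}{\partial x_j\partial x_l}=\frac{\partial h_{l,i}}{\partial x_j},
\]
combined with the fact that the two sides live in polynomial rings in disjoint variable sets, forces both to be zero. Since the only variables occurring in $h_{j,i'}$ are those with index in $I_{i'}$, the vanishing of all its partials with respect to these variables (together with $\chark(k)\nmid (d+1)!$ and positivity of degree) gives $h_{j,i'}=0$ whenever $j\notin I_{i'}$. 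Hence $\partial g/\partial x_j\in k[x_l:l\in I_i]_d$ for every $j\in I_i$, which is precisely the condition that all mixed partials $\partial^2 g/(\partial x_j\partial x_l)$ vanish for $j\in I_i$, $l\in I_{i'}$, $i\neq i'$. Exactly as in the proof of Lemma \ref{L:gradient}, this forces a decomposition $g=g_1+\cdots+g_r$ with $g_i\in k[x_l:l\in I_i]_{d+1}$, and moreover $\langle\nabla g_i\rangle\subset \langle\nabla f_i\rangle$ for each $i$.

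I would then apply Proposition \ref{P:benson} to each pair $(f_i,g_i)$ (with $g_i\neq 0$). The three alternatives are: $g_i=\lambda_i f_i$, or $f_i$ is a direct sum in $\Sym^{d+1}E(f_i)$, or $f_i$ is LDS. The middle option is ruled out by the maximal fineness of the decomposition $f=f_1+\cdots+f_r$. The third option is ruled out by the observation that if some $f_i$ is LDS then $f=f_i+\sum_{j\neq i}f_j$ is itself LDS (absorb the other summands into the non-derivative term $G$ in \eqref{E:lds}), contradicting the hypothesis. Hence $g_i=\lambda_i f_i$ for each $i$ (setting $\lambda_i=0$ when $g_i=0$), so $g\in\langle f_1,\ldots,f_r\rangle$. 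For the fiber formula \eqref{E:fiber-nabla}, note that $\nabla(\sum\lambda_i f_i)=\bigoplus_{\lambda_i\neq 0}\langle\nabla f_i\rangle$, which equals $\langle\nabla f\rangle$ exactly when every $\lambda_i\in k^*$.

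The main obstacle is the verification that each $f_i$ remains outside both exceptional cases of Proposition \ref{P:benson}; the LDS case in particular requires the short stability-of-LDS-under-direct-summing observation above. The mixed-partials step is routine once one sets up the double indexing correctly, but it must be executed with care to track which variables appear in each $h_{j,i}$.
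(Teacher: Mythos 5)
Your proof is correct and follows essentially the same route as the paper's: choose a basis adapted to $V=E(f_1)\oplus\cdots\oplus E(f_r)$, use symmetry of mixed partials to show $\partial^2 g/\partial x_j\partial x_l=0$ across distinct summands, split $g=g_1+\cdots+g_r$ with $\langle\nabla g_i\rangle\subset\langle\nabla f_i\rangle$, and apply Proposition \ref{P:benson} componentwise, ruling out the direct-sum alternative by maximal fineness and the LDS alternative by the hypothesis on $f$. Your explicit check that no $f_i$ can be LDS (absorbing the remaining summands into the term $G$ of \eqref{E:lds}) is a detail the paper leaves implicit and is worth having.
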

\begin{proof}
Suppose $\nabla g\subset \nabla f$. Let $x_1,\dots, x_n$ be a basis of $V$ adapted to the direct sum
decomposition $E(f)=E(f_1)\oplus\cdots\oplus E(f_r)$.
Take $x_i \in E(f_a)$ and $x_j \in E(f_b)$, where $a\neq b$. Then we 
have that 
\[
\dfrac{\partial^2 g}{\partial x_i\partial x_j} \in \Sym^{d} E(f_a) \cap \Sym^{d} E(f_b) =(0),
\] 
(cf. the proof of Lemma \ref{L:gradient}). 
It follows that the partials $\dfrac{\partial^2 g}{\partial x_i\partial x_j}$ vanish whenever 
 $x_i \in E(f_a)$ and $x_j \in E(f_b)$, where $a\neq b$. This implies that we can write
\[
g=g_1+\cdots+g_r,
\] where $E(g_i)=E(f_i)$. Moreover, from $\langle\nabla g\rangle\subset \langle\nabla f\rangle$, 
it follows that 
$\langle\nabla g_i\rangle \subset \langle\nabla f_i\rangle$. Applying Proposition \ref{P:benson}, and the fact that 
each $f_i$ is not an LDS form, we conclude 
that $g_i$ is a scalar multiple of $f_i$. The claim follows.
\end{proof}

\begin{corollary} Let $\nabla \colon \PP(S_{d+1})^{c} \to \Grass(n, S_d)$ be the gradient 
morphism. Then the non-injectivity locus of $\nabla$ is equal to the union of the direct sum
locus and the locus of LDS forms:
\begin{multline*}
\left\{ \bar f\in \PP(S_{d+1})^{c} \mid \text{there exists $\bar g\neq \bar f \in \PP(S_{d+1})^{nd}$ such that
 $\nabla f=\nabla g$}
\right\} \\
= \left\{ \bar f\in \PP(S_{d+1})^{c} \mid \text{either $f$ is a direct sum or $f$ is an
LDS form}
\right\}.
\end{multline*}

\end{corollary}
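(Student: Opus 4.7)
The plan is to deduce this corollary directly from Theorem \ref{T:injectivity-direct-sum}(A), since the two sets in the claimed equality correspond exactly to conditions (2) and (1) of that theorem. The work has essentially already been done through Proposition \ref{P:benson} and the invariance calculations in Lemmas \ref{L:gradient-ds} and \ref{L:gradient-lds}; what remains is to assemble these pieces and verify that no conciseness is lost along the way.

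For the inclusion $\subseteq$, I would take $\bar f$ in the non-injectivity locus, so that by definition there exists $\bar g\neq \bar f$ in $\PP(S_{d+1})^{c}$ with $\nabla f=\nabla g$. Since $\bar f\in \PP(S_{d+1})^{c}$, the form $f$ is concise (Assumption \ref{a-2} being in force), so the equivalence $(2)\Rightarrow (1)$ of Theorem \ref{T:injectivity-direct-sum}(A) applies and forces $f$ to be a direct sum or an LDS form.

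For the reverse inclusion $\supseteq$, I would treat the two cases in parallel. If $f$ is a direct sum, Lemma \ref{L:gradient-ds}(2) produces a one-parameter family $\{\bar f_\lambda\}_{\lambda \in k^{*}}$ of pairwise non-proportional forms satisfying $\langle\nabla f_\lambda\rangle=\langle\nabla f\rangle$ as subspaces of $S_d$. Because $\bar f\in \PP(S_{d+1})^{c}$ means $\dim_k\langle\nabla f\rangle=n$, the same dimension bound holds for every $f_\lambda$, so each $\bar f_\lambda$ lies in $\PP(S_{d+1})^{c}$, and equality of gradient spans translates to equality of the corresponding points of $\Grass(n,S_d)$. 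Taking any $\lambda$ with $\bar f_\lambda\neq \bar f$ (which exists since the family consists of pairwise non-proportional forms) produces the required $\bar g$. The LDS case is entirely analogous, invoking Lemma \ref{L:gradient-lds}(2) in place of Lemma \ref{L:gradient-ds}(2).

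There is no real obstacle: the theorem and the two gradient-invariance lemmas do all the work. The only mild bookkeeping point is that one must check that the witnesses $\bar g$ produced by the lemmas land in $\PP(S_{d+1})^{c}$ rather than merely in $\PP S_{d+1}$, and this is automatic from the identification $\langle\nabla g\rangle=\langle\nabla f\rangle$ together with the conciseness of $f$.
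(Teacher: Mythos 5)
Your proposal is correct and follows exactly the route the paper intends: the corollary is stated without proof precisely because it is a restatement of the equivalence $(1)\Leftrightarrow(2)$ of Theorem \ref{T:injectivity-direct-sum}(A), whose two directions are handled by Proposition \ref{P:benson} and by Lemmas \ref{L:gradient-ds}(2) and \ref{L:gradient-lds}(2), just as you describe. Your bookkeeping remark that the witnesses $\bar g$ remain in $\PP(S_{d+1})^{c}$ because $\langle\nabla g\rangle=\langle\nabla f\rangle$ has dimension $n$ is the right (and only) point needing verification.
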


For concise forms of degree $d+1\geq 3$, Kleppe has established that a maximally fine direct sum 
decomposition is unique \cite[Theorem 3.7]{kleppe-thesis}.  We obtain the following 
generalization of his result:
\begin{corollary}\label{C:mfds-unique}
Suppose $f$ is not an LDS form. Then
a maximally fine direct sum decomposition of $f$ is unique.
\end{corollary}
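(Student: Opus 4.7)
My plan is to combine Corollary \ref{C:fiber-nabla}, which pins down the fiber of $\nabla$ over $\nabla f$, with an essential-variables count. Suppose $f = f_1 + \cdots + f_r$ and $f = g_1 + \cdots + g_s$ are two maximally fine direct sum decompositions, so $V = \bigoplus_i E(f_i) = \bigoplus_j E(g_j)$, and each $f_i$ (resp.\ $g_j$) is concise but not a direct sum in $\Sym^{d+1} E(f_i)$ (resp.\ $\Sym^{d+1} E(g_j)$).

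First I would verify $\langle \nabla g_j\rangle \subseteq \langle \nabla f\rangle$ for every $j$: working in a basis adapted to $V = \bigoplus_j E(g_j)$, one has $\partial f/\partial x = \partial g_j/\partial x$ for $x \in E(g_j)$ and $\partial g_j/\partial x = 0$ otherwise. Corollary \ref{C:fiber-nabla} then lets me write $g_j = \sum_{i=1}^r c_{ij} f_i$ for some $c_{ij} \in k$. The key auxiliary fact I would invoke is that because the $f_i$'s are concise in their pairwise disjoint essential-variable spaces, taking sums is additive on essential variables:
\[
E(g_j) = \bigoplus_{i\,:\, c_{ij}\neq 0} E(f_i).
\]
Comparing the two direct sum decompositions $V = \bigoplus_i E(f_i) = \bigoplus_j \bigoplus_{c_{ij}\neq 0} E(f_i)$ forces each index $i$ to appear exactly once among the pairs $(i,j)$ with $c_{ij}\neq 0$, yielding a function $\sigma \colon \{1,\dots,r\}\to\{1,\dots,s\}$ characterized by $c_{ij}\neq 0$ precisely when $j=\sigma(i)$.

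To finish I would exploit the maximally fine hypothesis on the $g_j$'s. For each $j$, the expression $g_j = \sum_{i\in\sigma^{-1}(j)} c_{ij} f_i$ displays $g_j$ as a direct sum of $|\sigma^{-1}(j)|$ nonzero terms in $\Sym^{d+1} E(g_j)$; maximal fineness of $g_j$ then forces $|\sigma^{-1}(j)| = 1$. Hence $\sigma$ is a bijection (in particular $r = s$) and each $g_{\sigma(i)}$ is a scalar multiple of $f_i$; comparing $f = \sum_i f_i = \sum_i c_{i,\sigma(i)} f_i$ and using linear independence of the $f_i$'s (which lie in the disjoint subspaces $\Sym^{d+1} E(f_i) \subset S_{d+1}$) pins down $c_{i,\sigma(i)} = 1$ for all $i$. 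I expect the main obstacle to be the essential-variables bookkeeping in the middle step — the additivity of $E(\cdot)$ under sums with disjoint supports — although this is routine given $\chark(k)\nmid(d+1)!$, which is implicit throughout via the use of Corollary \ref{C:fiber-nabla}.
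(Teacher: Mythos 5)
Your proposal is correct and follows essentially the same route as the paper: both invoke Corollary \ref{C:fiber-nabla} to write each $g_j$ as a $k$-linear combination of the $f_i$, and then use that $g_j$ is not itself a direct sum to force exactly one coefficient to be nonzero. Your version merely spells out the details the paper leaves implicit (checking $\langle\nabla g_j\rangle\subseteq\langle\nabla f\rangle$, and the essential-variables bookkeeping behind ``$g_j$ would otherwise be a direct sum''), which is fine.
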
 
\begin{proof}
Suppose $f=f_1+\cdots+f_s=g_1+\cdots+g_t$ are two maximally fine direct sum decompositions.
Then by Corollary \ref{C:fiber-nabla}, we have that 
\[
\{\lambda_1 f_1+\lambda_2 f_2+\cdots+\lambda_r f_r\mid \lambda_i\in k^*\}=
\nabla^{-1}(\nabla f)=\{\lambda_1 g_1+\lambda_2 g_2+\cdots+\lambda_t g_t\mid \lambda_i\in k^*\}
.\]
It follows that
$g_i \in \langle f_1,\dots, f_r\rangle$.  Since $g_i$ is not a
direct sum itself, we must have $g_i=c_i f_j$ for some $j=1,\dots,s$ and some $c_i\in k^*$.  It is then 
immediate that $t=s$ and $c_i=1$ for all $i=1,\dots, s$. 
%claim follows. 
\end{proof}

\subsubsection{Connection with the work of Buczy\'nska-Buczy\'nski-Kleppe-Teitler}
\label{S:prior-work} 

\begin{comment}
In \cite{apolarity}, Buczy\'nska, Buczy\'nski, Kleppe, and Teitler prove
that for a non-degenerate form $f\in S_{d+1}$ over an algebraically closed field,
the apolar ideal $f^{\perp}$ has 
a minimal generator in degree $d+1$ if and only if either $f$ is a direct sum, or $f$ is 
an LDS form. 
Since LDS forms are GIT non-semistable and in particular 
singular, this translates into a computable and effective criterion for \emph{recognizing} whether a smooth form 
$f$ is a direct sum over an algebraically closed field.

In his thesis \cite{kleppe-thesis}, Kleppe uses the quadratic part of the apolar ideal $f^{\perp}$ to 
define an associative algebra $M(f)$ of finite dimension over the base field
($M(f)$ is different from the Milnor algebra $M_f$). He
then proves that, over an arbitrary field, direct sum decompositions of $f$ are 
in bijection with complete sets of orthogonal idempotents
of $M(f)$. 
A key step in the proof of the direct sum criterion in \cite{apolarity} is the
Jordan normal form decomposition of a certain linear operator, which in general requires solving a
characteristic equation. 
Similarly, finding a complete set of orthogonal idempotents requires solving a system of quadratic equations. This makes it challenging to turn \cite{apolarity} or \cite{kleppe-thesis} into an algorithm for finding
direct sum decompositions when they exist. 
\end{comment}

The following result is established in \cite{apolarity} under the assumption that $k$ is an algebraically
closed field of characteristic $0$: 
\begin{theorem}[{see \cite[Theorem 1.7]{apolarity}}]
A concise form $f\in S_{d+1}$ is either a direct sum or an LDS form if and only if the apolar ideal $f^{\perp}$
has a minimal generator in degree $d+1$. 
\end{theorem}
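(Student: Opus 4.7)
The plan is to translate the condition on minimal generators of $f^{\perp}$ in degree $d+1$ into a statement about the fiber dimension of the gradient morphism $\nabla$ at $\bar f$, at which point Proposition \ref{P:benson} and Lemmas \ref{L:gradient-ds}(2), \ref{L:gradient-lds}(2) immediately close the argument. Concretely, setting $V(f) := \{g \in S_{d+1} \mid \langle \nabla g\rangle \subset \langle \nabla f\rangle\}$, the goal is to establish the dimension identity
\[
\dim_k V(f) - 1 \;=\; \dim_k (f^{\perp})_{d+1} - \dim_k \bigl(\D_1 \cdot (f^{\perp})_d\bigr),
\]
whose right-hand side is by definition the number of minimal generators of $f^{\perp}$ in degree $d+1$. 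Granting this, the right-hand side is positive exactly when $V(f) \supsetneq k f$, which by Proposition \ref{P:benson} is equivalent to $f$ being a direct sum or an LDS form (the converse direction being supplied by the explicit $k^*$-families produced in Lemmas \ref{L:gradient-ds}(2) and \ref{L:gradient-lds}(2)).

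To prove the identity, I would invoke the perfect polar pairing $S_{d+1} \times \D_{d+1} \to k$ (which is perfect since $\chark(k) \nmid (d+1)!$) and compute orthogonal complements of the nested subspaces $\D_1 \cdot (f^{\perp})_d \subset (f^{\perp})_{d+1} \subset \D_{d+1}$ inside $S_{d+1}$. The orthogonal of $(f^{\perp})_{d+1}$ is just $kf$, because $\D/f^{\perp}$ has one-dimensional socle in degree $d+1$ represented by $f$ under Macaulay duality. The orthogonal of $\D_1 \cdot (f^{\perp})_d$ will coincide with $V(f)$: using that $(zh) \circ g = z \circ (h \circ g) \in k$ for $z \in \D_1$, $h \in (f^{\perp})_d$, and $g \in S_{d+1}$, together with perfection of the pairing $\D_1 \times S_1 \to k$, the element $g$ pairs trivially with every such $zh$ precisely when $h \circ g = 0 \in S_1$ for every $h \in (f^{\perp})_d$, i.e., $(f^{\perp})_d \subset (g^{\perp})_d$. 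Dualizing this containment under the perfect pairing $\D_d \times S_d \to k$ via the identification $(f^{\perp})_d^{\perp} = \langle \nabla f\rangle$ (and analogously for $g$) converts it into $\langle \nabla g\rangle \subset \langle \nabla f\rangle$, i.e., $g \in V(f)$.

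The main obstacle is verifying the identity $(f^{\perp})_d^{\perp} = \langle \nabla f\rangle$ inside $S_d$: the containment $\langle \nabla f\rangle \subset (f^{\perp})_d^{\perp}$ follows from $(hz) \circ f = z \circ (h \circ f) = 0$ for $h \in (f^{\perp})_d$, $z \in \D_1$, and equality then holds by a dimension count---both sides are $n$-dimensional, the left by conciseness of $f$ and the right by the Gorenstein symmetry $\dim_k (\D/f^{\perp})_d = \dim_k (\D/f^{\perp})_1 = n - \dim_k(f^{\perp})_1 = n$. Once this identification is in hand the entire argument is a clean dualization, and the substantive input from Section \ref{S:injectivity}---namely Proposition \ref{P:benson} together with the fiber description in Corollary \ref{C:fiber-nabla}---is what produces the direct sum or LDS structure from the existence of a second element in $V(f)$.
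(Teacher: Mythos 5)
Your argument is correct and is essentially the paper's own: the paper proves exactly this duality (as a standalone proposition) by identifying a minimal generator in degree $d+1$ with the strict inclusion $\D_1(f^{\perp})_d\subsetneq (f^{\perp})_{d+1}$, taking orthogonal complements in $S_{d+1}$ under the perfect polar pairing to get $\langle f\rangle$ versus the set of $g$ with comparable gradient spans, and then invoking Proposition \ref{P:benson} together with Lemmas \ref{L:gradient-ds}(2) and \ref{L:gradient-lds}(2). Your identification of the complement of $\D_1(f^{\perp})_d$ as $\{g\mid \langle\nabla g\rangle\subseteq\langle\nabla f\rangle\}$ is the form that feeds directly into Proposition \ref{P:benson}, so no further adjustment is needed.
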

Since LDS forms are GIT non-semistable and in particular 
singular, this translates into a computable and effective criterion for \emph{recognizing} whether a smooth form 
$f$ is a direct sum over an algebraically closed field.

The following simple observation 
reconciles the above result with Theorem \ref{T:injectivity-direct-sum}. 
\begin{prop} Keep Assumption \ref{a-2}.  Consider the apolarity action $\D \times S \to S$ given by \[
F(z_1,\dots,z_n) \circ g=F\left(\frac{\partial}{\partial x_1}, \dots, \frac{\partial}{\partial x_n}\right) g(x_1,\dots,x_n).
\]
Then for a form $f\in S_{d+1}$, the apolar ideal 
\[
f^{\perp} =\{F\in \D \mid F\circ f=0\}
\]
has a minimal generator in degree $d+1$ if and only if 
there exists $g\in S_{d+1}$, not a scalar multiple of $f$, such that $\nabla f \subset \nabla g$.
\end{prop}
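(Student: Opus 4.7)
The plan is to translate the minimal-generator condition on $f^{\perp}$ in degree $d+1$ into a surjectivity statement about a multiplication map, and then dualize it using the apolarity pairing to obtain an equivalent condition on degree $d+1$ forms with prescribed partial derivatives.

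The first step is a routine reduction: since $f^{\perp}$ is an ideal of $\D$ and $z^{\alpha}F = z_i\cdot(z^{\alpha-e_i}F)$ for any $i$ in the support of $\alpha$, every element of $(\D_{+}\cdot f^{\perp})_{d+1}$ already lies in $\D_1\cdot (f^{\perp})_d$. Hence $f^{\perp}$ admits a minimal generator in degree $d+1$ if and only if the multiplication map
\[
\mu\colon \D_1\otimes (f^{\perp})_d \longrightarrow (f^{\perp})_{d+1}
\]
fails to be surjective.

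The heart of the argument is to dualize $\mu$. The apolarity identity $(z_iG)\circ h = G\circ(\partial h/\partial x_i)$, combined with the perfect pairing $S_j\times \D_j\to k$ (valid by the hypothesis on $\chark(k)$), identifies $(f^{\perp})_d\subset \D_d$ as the annihilator of $\langle\nabla f\rangle\subset S_d$ and $(f^{\perp})_{d+1}\subset \D_{d+1}$ as the annihilator of $\langle f\rangle\subset S_{d+1}$. Consequently $(f^{\perp})_d^{\vee}\cong S_d/\langle\nabla f\rangle$ and $(f^{\perp})_{d+1}^{\vee}\cong S_{d+1}/\langle f\rangle$, and the same identity shows the transpose of $\mu$ is the explicit map
\[
\mu^{\vee}\colon S_{d+1}/\langle f\rangle \longrightarrow S_1\otimes \bigl(S_d/\langle\nabla f\rangle\bigr),\qquad [h]\longmapsto \sum_{i=1}^n x_i\otimes [\partial h/\partial x_i].
\]

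Surjectivity of $\mu$ is now equivalent to injectivity of $\mu^{\vee}$, and a nonzero class $[g]\in S_{d+1}/\langle f\rangle$ lies in $\ker\mu^{\vee}$ precisely when every partial $\partial g/\partial x_i$ lies in $\langle\nabla f\rangle$, i.e.\ when the containment of gradient spans appearing in the statement holds for $f$ and $g$. Chaining the three equivalences gives the proposition. The only real obstacle is keeping straight the direction of the gradient-span containment that emerges from $\mu^{\vee}$; this is a bookkeeping matter handled entirely by the single apolarity identity above. Under the conciseness of $f$ implicit in the reconciliation with Theorem \ref{T:injectivity-direct-sum}, $g$ may moreover be chosen concise by Proposition \ref{P:benson} together with Lemmas \ref{L:gradient-ds}(2) and \ref{L:gradient-lds}(2), at which point $\langle\nabla g\rangle = \langle\nabla f\rangle$ and the two possible directions of the containment coincide with the equality $\nabla f = \nabla g$ of Theorem \ref{T:injectivity-direct-sum}.
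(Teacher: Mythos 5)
Your proof is correct and is essentially the paper's own argument in transposed form: the paper compares the annihilators in $S_{d+1}$ of $(f^{\perp})_{d+1}$ and of $\D_1 (f^{\perp})_d$ directly, which is exactly your ``$\mu$ surjective iff $\mu^{\vee}$ injective'' dichotomy, and both versions hinge on the identity $(z_iF)\circ g = F\circ(\partial g/\partial x_i)$ together with perfectness of the pairing under Assumption \ref{a-2}. Your worry about the direction of the containment is well-founded but resolves in your favor: the condition that actually falls out of the computation is $\langle\nabla g\rangle\subset\langle\nabla f\rangle$, exactly as you derive, and the containment $\nabla f\subset\nabla g$ printed in the statement (and in the last lines of the paper's proof) is a typo --- for instance $f=x_1^{d+1}\in k[x_1,x_2]$ admits $g=x_1^{d}x_2$ with $\langle\nabla f\rangle\subsetneq\langle\nabla g\rangle$, yet $f^{\perp}=(z_2,z_1^{d+2})$ has no minimal generator in degree $d+1$. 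Consequently your closing appeal to conciseness and Proposition \ref{P:benson} is not needed to prove the (corrected) proposition; it only serves to explain, correctly, why the two directions of the containment coincide in the concise setting where the proposition is used to reconcile Theorem \ref{T:injectivity-direct-sum} with the result of \cite{apolarity}.
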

\begin{proof}
Let $I=f^{\perp} \subset \D$. Then  $S/I$ is a Gorenstein Artin $k$-algebra with socle in degree $d+1$.
%we have $\codim (I_{d+1}, S_{d+1})=1$.
%Let $J=(I_n \mid n\leq d)$ be the ideal generated by the elements
%of $I$ of degree at most $d$.    We have $J_{d+1} \subset I_{d+1}$ 
%We let $J_{d+1}=\D_{1} I_d$. Then $J_{d+1} \subset I_{d+1}$ 
By definition, $I$ has a minimal generator in degree $d+1$ if and only if 
and $\dim_k (I_{d+1}/ \D_{1} I_d) \geq 1$, or equivalently if $\D_1 I_d \subsetneq I_{d+1}$.

On the other hand, $\D_1 I_d \subsetneq I_{d+1}$ if and only if 
\begin{equation*}
\{g \in S_{d+1} \mid F\circ g =0,\  \text{for all $F \in I_{d+1}$}\}
 \subsetneq 
\{g\in S_{d+1} \mid F\circ g=0\ \text{for all $F\in \D_1 I_{d}$}\}.
\end{equation*}
We now compute that
\[
\{g \in S_{d+1} \mid F\circ g =0,\  \text{for all $F \in I_{d+1}$}\} =\langle f\rangle,
\]
and
\begin{multline*}
\{g\in S_{d+1} \mid F\circ g=0\ \text{for all $F\in \D_1 I_{d}$}\} \\ =  
\{g\in S_{d+1} \mid F\circ \frac{\partial g}{\partial
x_i}=0\ \text{for all $F\in I_{d}$, and all $i=1,\dots, n$}\}\\ 
=\{g\in S_{d+1} \mid F\circ \frac{\partial g}{\partial
x_i}=0\ \text{for all $F\in (f^{\perp})_{d}$, and all $i=1,\dots, n$}\}
%&=\{g\in S_{d+1} \mid F\circ \frac{\partial g}{\partial
%x_i}=0\ \text{for all $F\in (f^{\perp})_{d}$}\} 
%\\ 
=\{g\in S_{d+1} \mid \nabla f\subset \nabla g\},
\end{multline*}
where we have used the equality 
\[
I_d=(f^{\perp})_d=\{F\in \D_d \mid F\circ \partial f/\partial x_i=0 \quad \text{for all $i=1,\dots, n$}\}.
\]
We conclude that $\D_1 I_d \subsetneq I_{d+1}$ if and only if 
\[
\langle f \rangle \subsetneq \{g\in S_{d+1} \mid \nabla f\subset \nabla g\},
\]
which is precisely the condition that 
there exists $g\in S_{d+1}$, not a scalar multiple of $f$, such that $\nabla f \subset \nabla g$.

\begin{comment}
At the same time considering the apolarity action $\D \times S \to S$ given by \[
F(z_1,\dots,z_n) \circ g=F\left(\frac{\partial}{\partial x_1}, \dots, \frac{\partial}{\partial x_n}\right) g(x_1,\dots,x_n),
\]
we have using $J_{d+1}=(z_1,\dots,z_n)J_d$:
\begin{multline*}
\{g\in S_{d+1} \mid F\circ g=0\ \text{for all $F\in J_{d+1}$}\} =\{g\in S_{d+1} \mid F\circ \frac{\partial g}{\partial
x_i}=0\ \text{for all $F\in J_{d}$}\}\\ =\{g\in S_{d+1} \mid F\circ \frac{\partial g}{\partial
x_i}=0\ \text{for all $F\in (f^{\perp})_{d}$}\}=\{g\in S_{d+1} \mid F\circ \frac{\partial g}{\partial
x_i}=0\ \text{for all $F\in (f^{\perp})_{d}$}\}\\=\{g\in S_{d+1} \mid \nabla f\subset \nabla g\}.
\end{multline*}
\end{comment}

\end{proof}

\section{Direct sum decomposability of smooth forms}
\label{S:factorization}
We keep Assumption \ref{ass-nd}. 
\begin{theorem}\label{MT1} Suppose $k$ is an arbitrary field satisfying $\chark(k)\nmid (n(d-1))!$. Let $f\in S_{d+1}$ be a smooth form.
Then the following are equivalent:
\begin{enumerate}
\item\label{direct} $f$ is a direct sum.
\item\label{nabla-direct} $\nabla f$ is a balanced direct sum. 
\item\label{A-direct} $A(f)$ is a balanced direct product.
\item\label{A-direct-nonbalanced} $A(f)$ is a direct product.
\item\label{nabla-action-1} $\nabla f$ admits a non-trivial $\GG_m$-action defined over $k$.
\item\label{A-action} $A(f)$ admits a non-trivial $\GG_m$-action defined over $k$.
\end{enumerate}
Moreover, if $z_1,\dots, z_n$ is a basis of $V^{\vee}$ in which $A(f)$ factors as 
\[
A(f)=G_1(z_1,\dots,z_a)G_2(z_{a+1},\dots, z_n),
\] 
then $f$ decomposes as 
\[f=f_1(x_1,\dots,x_a)
+f_2(x_{a+1},\dots,x_n)\] in the dual basis $x_1,\dots,x_n$ of $V$.
\end{theorem}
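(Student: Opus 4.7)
The plan is to prove the cycle (1)$\Leftrightarrow$(2), (1)$\Rightarrow$(3)$\Rightarrow$(4)$\Rightarrow$(5)$\Leftrightarrow$(6)$\Rightarrow$(1), together with the moreover statement. The equivalence (1)$\Leftrightarrow$(2) is Lemma \ref{L:gradient} applied to $f$, which is concise since smooth. For (1)$\Rightarrow$(3), I invoke \cite[Lemma 2.11]{fedorchuk-isaev}: a decomposition $f=f_1+f_2$ in disjoint variables yields $M_f\cong M_{f_1}\otimes M_{f_2}$, whose Macaulay inverse in disjoint dual variables is $A(f_1)\cdot A(f_2)$; the factor degrees $\dim(V_i)\cdot (d-1)$ make the product automatically balanced. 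The step (3)$\Rightarrow$(4) is trivial, and (1)$\Rightarrow$(5) is Lemma \ref{L:gradient-ds}(1).

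The crux is (4)$\Rightarrow$(5). Writing $A(f)=G_1(z_1,\dots,z_a)\,G_2(z_{a+1},\dots,z_n)$, the tensor-product Gorenstein Artin quotient
\[
\bigl(k[x_1,\dots,x_a]/(G_1^{\perp})_0\bigr)\otimes \bigl(k[x_{a+1},\dots,x_n]/(G_2^{\perp})_0\bigr)
\]
has Macaulay inverse $G_1G_2=A(f)$, so by uniqueness
\[
J_f \;=\; A(f)^{\perp} \;=\; (G_1^{\perp})_0\cdot S \;+\; (G_2^{\perp})_0\cdot S.
\]
The $k$-rational one-parameter subgroup $\rho\colon\GG_m\to\SL(V)$ with weights $(n-a,\dots,n-a,-a,\dots,-a)$ on $(x_1,\dots,x_n)$ preserves each summand, and hence $J_f$ and its degree-$d$ part $\langle\nabla f\rangle$. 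For (5)$\Leftrightarrow$(6), I observe that the associated form map $\AA\colon \Grass(n,S_d)_{\Res}\to \PP\D_{n(d-1)}$ is set-theoretically injective (since the Macaulay inverse $\AA(U)$ determines $I_U$), so $\SL(V)$-equivariance gives $\Stab_{\SL(V)}(\nabla f)=\Stab_{\SL(V)}(A(f))$.

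For (5)$\Rightarrow$(1), Mumford's criterion makes a smooth form of degree $\geq 3$ GIT stable, so $\Stab_{\SL(V)}(\overline{f})$ is finite and cannot contain the nontrivial $\rho$ from (5). Hence some $g:=\rho(t)\cdot f$ is not proportional to $f$ while $\nabla g=\nabla f$, and Theorem \ref{T:injectivity-direct-sum}(A) forces $f$ to be a direct sum or an LDS form; Lemma \ref{L:lds} excludes LDS under smoothness.

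For the moreover statement, the specific $\rho$ from (4)$\Rightarrow$(5) has weight partition $V=U\oplus W$ with $U=\langle x_1,\dots,x_a\rangle$ and $W=\langle x_{a+1},\dots,x_n\rangle$. By Corollary \ref{C:fiber-nabla}, $\rho$ stabilizes the fiber $\nabla^{-1}(\nabla f)=\{\sum \lambda_i f_i\}$ arising from the maximally fine decomposition $f=\sum f_i$ (unique by Corollary \ref{C:mfds-unique}); connectedness of $\GG_m$ makes each $E(f_i)$ $\rho$-stable and each $f_i$ a $\rho$-eigenvector. Smoothness of $f$ induces smoothness of each $f_i$, and a pure-bidegree analysis (a genuinely bi-weighted smooth hypersurface is excluded, since it would vanish on a coordinate flat along which its partials also vanish) forces each $E(f_i)\subset U$ or $\subset W$, yielding the asserted direct sum in the dual basis. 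The main obstacle will be the apolarity identity underpinning (4)$\Rightarrow$(5) and the smoothness-based variable matching in the moreover.
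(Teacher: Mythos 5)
Your route is genuinely different from the paper's: the paper's engine is Proposition \ref{P:key}, a self-contained commutative-algebra proof of $\eqref{nabla-direct}\Leftrightarrow\eqref{A-direct}$ (and of the ``moreover'' clause) via regular sequences and ideal membership modulo $(x_1,\dots,x_a)^{a(d-1)+1}+(x_{a+1},\dots,x_n)^{(n-a)(d-1)+1}$, valid over any $k$ with the stated characteristic hypothesis; you instead close the cycle through $\eqref{A-direct-nonbalanced}\Rightarrow\eqref{nabla-action-1}\Rightarrow\eqref{direct}$, using the tensor-product description of $(G_1G_2)^{\perp}$ and then Theorem \ref{T:injectivity-direct-sum}. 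Your $\eqref{A-direct-nonbalanced}\Rightarrow\eqref{nabla-action-1}$ step is correct and is arguably cleaner than the paper's separate $\eqref{A-direct-nonbalanced}\Rightarrow\eqref{A-direct}$ argument (which uses a reverse-lex initial-monomial bound and GIT semistability of $A(f)$).

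The genuine gap is in $\eqref{nabla-action-1}\Rightarrow\eqref{direct}$. Theorem \ref{T:injectivity-direct-sum} is proved under Assumption \ref{a-2}, i.e.\ over an algebraically closed field, so your argument only yields that $f$ is a direct sum over $\bar k$. Direct sum decomposability does \emph{not} descend along field extensions: the paper's Example \ref{E:non-closed} exhibits $f_{-6}$ with $A(f_{-6})=(z_1^2+z_2^2)^2$, which is a direct sum over $\CC$ but not over $\RR$. Since Theorem \ref{MT1} is stated (and advertised) for arbitrary $k$ with $\chark(k)\nmid (n(d-1))!$, and since $\eqref{nabla-action-1}\Rightarrow\eqref{direct}$ is the implication that closes your cycle, this rationality loss breaks the proof of every $k$-rational equivalence downstream of it. (There is also a minor version of the same issue in your use of Corollaries \ref{C:fiber-nabla} and \ref{C:mfds-unique} for the ``moreover'' clause, though there the final conclusion $f\in\Sym^{d+1}U\oplus\Sym^{d+1}W$ with $U,W$ spanned by the $k$-rational dual basis is a $k$-linear condition and so descends.) The fix is available inside your own proposal: the bidegree analysis you sketch for the ``moreover'' part — each $f_i$ in the maximally fine decomposition over $\bar k$ is a $\rho$-eigenvector, smoothness excludes genuinely bihomogeneous $f_i$, hence each $E(f_i)$ lies in a single $\rho$-weight space of $V$ — applied to the $k$-rational weight decomposition $V=\bigoplus V_{(i)}$ of the given one-parameter subgroup shows $f\in\bigoplus_i\Sym^{d+1}V_{(i)}$, a condition that can be checked after base change and therefore holds over $k$. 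You should run $\eqref{nabla-action-1}\Rightarrow\eqref{direct}$ through that argument explicitly rather than through Theorem \ref{T:injectivity-direct-sum} alone; this is in effect what the paper outsources to the proof of \cite[Theorem 1.0.1]{fedorchuk-ss}.
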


\begin{proof}[Proof of Theorem \ref{MT1}] 

The implications $\eqref{direct} \Longrightarrow \eqref{nabla-direct}$ and
$\eqref{direct}  \Longrightarrow \eqref{nabla-action-1}$ 
are in Lemma \ref{L:gradient-ds}. 
The implication $\eqref{nabla-direct} \Longrightarrow \eqref{direct}$ is in Lemma \ref{L:gradient}.

The equivalence $\eqref{nabla-direct} \Longleftrightarrow \eqref{A-direct}$ is proved in Proposition \ref{P:key} below. 
This concludes the proof of equivalence for the first three conditions. 

Next we prove $\eqref{A-direct-nonbalanced} \Longrightarrow \eqref{A-direct}$.
Suppose $A(f)=G_1(z_1,\dots,z_a)G_2(z_{a+1},\dots,z_n)$ 
is a direct product decomposition in 
a basis $z_1,\dots,z_n$ of $V^{\vee}$. Let $x_1,\dots,x_n$ be the dual basis of $V$. Suppose $x_1^{d_1}\cdots x_n^{d_n}$ is 
the smallest with respect to the graded reverse lexicographic order monomial of degree $n(d-1)$ that does not lie in $(J_f)_{n(d-1)}$. Since
$z_1^{d_1}\cdots z_n^{d_n}$ must appear with a nonzero coefficient in $A(f)$, we have that  
\[
d_1+\cdots+d_a=\deg G_1.
\] On the other hand,
by \cite[Lemma 4.1]{fedorchuk-isaev}, we have that $d_1+\cdots+d_a\leq a(d-1)$.
It follows that $\deg G_1\leq a(d-1)$. By symmetry, we also have that $\deg G_2 \leq (n-a)(d-1)$.
We conclude that both inequalities must be equalities and so $A(f)=G_1G_2$ 
is a balanced direct product decomposition. Alternatively, we can consider a diagonal action of $\GG_m
\subset \SL(V)$ on $V$
 that acts on $V^{\vee}$ as follows: 
\[
t\cdot (z_1,\dots, z_n)=\left(t^{(n-a)} z_1,\dots, t^{(n-a)} z_a, t^{-a} z_{a+1},
\dots, t^{-a} z_{n}\right).
\]
Then $A(f)$ is homogeneous with respect to this action, and has weight
$(n-a)\deg G_1 -a \deg G_2.$
However, the relevant parts of the proof of \cite[Theorem 1.2]{fedorchuk-ss} go through to show that $A(f)$ satisfies 
the Hilbert-Mumford numerical criterion for semistability. This forces $(n-a)\deg G_1 -a \deg G_2=0.$

We now turn to the last two conditions. First, the morphism $\AA$ is an  $\SL(n)$-equivariant  locally closed immersion by 
\cite[\S2.5]{alper-isaev-assoc-binary}, and so is stabilizer preserving. 
This proves the equivalence $\eqref{nabla-action-1} \Longleftrightarrow \eqref{A-action}$. 
The implication $\eqref{nabla-action-1}  \Longrightarrow \eqref{direct}$ follows
from the proof of \cite[Theorem 1.0.1]{fedorchuk-ss} that shows that for a smooth $f$, 
the gradient point $\nabla f$ has a non-trivial $\GG_m$-action if and only if $f$ is a direct sum. We note that even though 
stated over $\CC$, the relevant parts of the proof of \cite[Theorem 1.0.1]{fedorchuk-ss}  use only \cite[Lemma 3.5]{fedorchuk-ss}, which remains valid over a field $k$ with $\chark(k)=0$ or $\chark(k)>d+1$, and the 
fact that a smooth form over any field must satisfy the Hilbert-Mumford numerical criterion for stability.
\end{proof}

\begin{comment}
\begin{remark} Given a basis of $V$ diagonalizing a non-trivial $\GG_m$-action on $\nabla F$,
the proof of \cite[Theorem 1.0.1]{fedorchuk-ss} can be turned into an algorithm for finding a basis
of $V$ in which $F$ decomposes into a direct sum.  At the same time, using a tangent space computation,
it is possible to give a purely linear-algebraic criterion for $\nabla F$ to admit a non-trivial $\GG_m$ action,
without writing one down explicitly. 
\end{remark}
\end{comment}

\begin{comment}
\begin{proof}[Proof of Theorem \ref{MT2}] 
By \cite[Theorem 1.0.1]{fedorchuk-ss}, for every GIT stable $f$, 
the gradient point $\nabla f$ is polystable. Furthermore, it admits a $\GG_m$-action if and 
only if $f$ is a direct sum. 
Moreover, \cite[Proposition 2.1]{fedorchuk-ss} shows that the morphism of the GIT quotients
\[
\nabla\gitq \SL(n)\colon (\PP S_{d+1})^{s}\gitq \SL(n) \to \Grass(n, S_d)^{ss}\gitq \SL(n)
\] is injective. This proves 
that for every stable $f$, the fiber dimension of $\nabla$ at 
$\langle f\rangle$ equals to the dimension of the stabilizer of $\nabla f$. 
This concludes the proof of all equivalences. The fact that the locus of direct 
sums is closed in $(\PP S_{d+1})^{s}$ now follows from the upper semicontinuity 
(on the domain) of fiber dimensions.
\end{proof}
\end{comment}

\begin{prop}\label{P:key} Let $d\geq 2$. Suppose $k$ is a field with $\chark(k)=0$ or $\chark(k)>n(d-1)$. Then an element
$U\in \Grass(n, \Sym^{d} V)_{\Res}$ is a balanced direct sum if and only if $\AA(U)$ is a balanced
direct product. Moreover, if $z_1,\dots, z_n$ is a basis of $V^{\vee}$ in which $\AA(U)$ factors as a balanced direct product,
%$\AA(U)=G_1(z_1,\dots,z_a)G_2(z_{a+1},\dots, z_n)$,
 then $U$ decomposes as a balanced direct sum in the dual basis $x_1,\dots,x_n$ of $V$.
\end{prop}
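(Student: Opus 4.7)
My plan is to reduce both directions to a single compatibility between apolarity and separated-variable factorizations. Specifically, I would first establish that for any nonzero forms $F_1\in k[z_1,\ldots,z_a]$ and $F_2\in k[z_{a+1},\ldots,z_n]$,
\[
(F_1 F_2)^{\perp} \;=\; F_1^{\perp,a}\cdot S \;+\; F_2^{\perp,>a}\cdot S,
\]
where the superscripts indicate that the apolar ideals are computed inside $k[x_1,\ldots,x_a]$ and $k[x_{a+1},\ldots,x_n]$ respectively. The containment $\supseteq$ is immediate from the polar pairing (the $x_i$ for $i\leq a$ act only on $F_1$, and dually for $i>a$). For the reverse, the quotient of $S$ by the right-hand side is the tensor product of the two Gorenstein Artin local quotients, hence itself Gorenstein Artin local with socle in degree $\deg F_1+\deg F_2$; by the uniqueness of the Macaulay inverse system (for which $\chark(k)\nmid n(d-1)!$ is used) it coincides with $S/(F_1F_2)^{\perp}$.

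Granting this formula, the forward direction is immediate: for $U=L_1\oplus L_2$ a balanced direct sum with each $L_i$ a complete intersection in its subring of socle degrees $a(d-1)$ and $(n-a)(d-1)$ respectively, the compatibility formula writes $\AA(U)$ as the product of the two associated forms of $L_1$ and $L_2$, in disjoint variables and of the correct degrees to yield a balanced direct product in the dual basis.

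For the converse, assume $\AA(U)=G_1G_2$ is a balanced direct product with $\deg G_1=a(d-1)$ and $\deg G_2=(n-a)(d-1)$. The compatibility formula gives $I_U=G_1^{\perp,a}\cdot S+G_2^{\perp,>a}\cdot S$. Since $I_U$ is generated in degree $d$, we have $(I_U)_{<d}=0$, so both $G_1^{\perp,a}$ and $G_2^{\perp,>a}$ are supported in degrees $\geq d$. Because the two ideals sit in disjoint variable subrings, a minimal generating set of $I_U$ is the disjoint union of minimal generating sets of the two parts; hence the total minimal number of generators of $I_U$, which equals $n$, is the sum of the minimal numbers of generators of the two parts. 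By Krull's height theorem applied to the zero-dimensional ideals $G_i^{\perp,\star}$ in their respective $a$-variable and $(n-a)$-variable polynomial rings, the two parts require at least $a$ and $n-a$ generators respectively; equality then forces exactly $a$ and $n-a$ minimal generators, all of degree $d$ since $I_U$ is generated there. Taking degree $d$ parts yields
\[
U=(I_U)_d=(G_1^{\perp,a})_d\oplus (G_2^{\perp,>a})_d,
\]
a balanced direct sum in the dual basis $x_1,\ldots,x_n$, which also settles the moreover clause.

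The chief obstacle is the key compatibility formula, whose honest proof requires a careful treatment of the apolar pairing and Macaulay inverse systems over positive characteristic; once this is in hand, everything reduces to straightforward bookkeeping with minimal generators and Krull's theorem.
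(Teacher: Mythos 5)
Your argument is correct, but it takes a genuinely different route from the paper's. You derive both directions from the single identity $(F_1F_2)^{\perp}=F_1^{\perp,a}S+F_2^{\perp,>a}S$ for forms in disjoint variables, and then finish the converse by a generator count: additivity of minimal generators for ideals in disjoint variable sets plus Krull's height theorem forces $\mu(G_1^{\perp,a})=a$ and $\mu(G_2^{\perp,>a})=n-a$, all in degree $d$, whence $U=(G_1^{\perp,a})_d\oplus(G_2^{\perp,>a})_d$. The paper instead only uses the tensor-product/inverse-system compatibility (its citation of \cite[Lemma 2.11]{fedorchuk-isaev}) for the easy direction; for the converse it extracts from the factorization the containments $(x_1,\dots,x_a)^{a(d-1)+1}\subset I_U$ and $(x_{a+1},\dots,x_n)^{(n-a)(d-1)+1}\subset I_U$, proves by a socle-degree count that $\dim_k\bigl(U\cap(x_1,\dots,x_a)\bigr)=a$ and $\dim_k\bigl(U\cap(x_{a+1},\dots,x_n)\bigr)=n-a$ (this Claim plays the role of your generator count), and then explicitly builds regular sequences $s_i\in k[x_1,\dots,x_a]_d$, $t_j\in k[x_{a+1},\dots,x_n]_d$ by restricting generators and verifies $U=\langle s_i,t_j\rangle$ by comparing the two ideals in degree $n(d-1)$ via Macaulay's theorem. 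Your route is more conceptual and shorter; the paper's is more hands-on and exhibits the decomposing regular sequences directly. Two spots in your write-up deserve an extra line each, though neither is a gap: the reverse inclusion in your compatibility formula should be justified by noting that the surjection from the (Gorenstein, socle degree $\deg F_1+\deg F_2$) tensor-product quotient onto $S/(F_1F_2)^{\perp}$ must be an isomorphism because any proper quotient of a graded Gorenstein Artin algebra kills its top degree; and the additivity of minimal generators (including degree by degree, which you need to conclude that $G_1^{\perp,a}$ and $G_2^{\perp,>a}$ are generated in degree $d$) should be justified via the bigrading of $S=k[x_1,\dots,x_a]\otimes_k k[x_{a+1},\dots,x_n]$ or the fact that the minimal free resolution of a tensor product is the tensor product of the minimal free resolutions.
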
 

\begin{proof}
The forward implication is an easy observation. 
Consider a balanced direct sum $U=\langle g_1,\dots, g_n\rangle
\in \Grass(n, k[x_1,\dots,x_n]_{d})_{\Res},$
where $g_1,\dots, g_a\in k[x_1,\dots,x_a]_{d}$ and $g_{a+1},\dots, g_n\in k[x_{a+1},\dots,x_n]_{d}$. Then, up to a nonzero scalar, 
\[
\AA(U)=\AA(g_1,\dots,g_a)\AA(g_{a+1},\dots,g_n),
\]
where $\AA(g_1,\dots,g_a)\in k[z_1,\dots,z_a]_{a(d-1)}$ and 
$\AA(g_{a+1},\dots,g_n)\in k[z_{a+1},\dots, z_n]_{(n-a)(d-1)}$;
see \cite[Lemma 2.11]{fedorchuk-isaev}, which also follows from the fact that
on the level of algebras, we have
\[
\frac{k[x_1,\dots,x_n]}{(g_1,\dots,g_n)} \simeq \frac{k[x_1,\dots,x_a]}{(g_1,\dots, g_a)} \otimes_{k} \frac{k[x_{a+1},\dots,x_n]}{(g_{a+1},\dots, g_n)} .
\]

Suppose now $\AA(U)$ is a balanced direct product in a basis $z_1,\dots,z_n$ of $V^{\vee}$:
\begin{equation}\label{product}
\AA(U)=F_1(z_1,\dots,z_a)F_2(z_{a+1},\dots,z_n),
\end{equation}
where $\deg(F_1)=a(d-1)$ and $\deg(F_2)=(n-a)(d-1)$.
Let $x_1,\dots,x_n$ be the dual basis of $V$, and let $I_U \subset k[x_1,\dots,x_n]$ be the 
complete intersection ideal spanned by the elements of $U$.
We have that  \[I_U=\AA(U)^{\perp} \subset k[x_1,\dots,x_n].\] %by the definition of $\AA(U)$.
It is then evident from \eqref{product} and the definition of an apolar ideal that 
\begin{equation}\label{EE1}
(x_1,\dots, x_a)^{a(d-1)+1} \subset I_U
\end{equation}
and 
\begin{equation}\label{EE2}
(x_{a+1}, \dots,x_n)^{(n-a)(d-1)+1} \subset I_U.
\end{equation}

We also have the following observation:
\begin{claim}\label{claim} %We have that 
\begin{align*} 
\dim_k \bigl(U \cap (x_1,\dots, x_a)\bigr) &= a, \\ 
\dim_k \bigl(U \cap (x_{a+1},\dots, x_n)\bigr) &= n-a.
\end{align*}
\end{claim}
\begin{proof}
By symmetry, it suffices to prove the second statement. Since $U$ is spanned by a length $n$ regular sequence of
degree $d$ forms,
we have that $\dim_k \bigl(U \cap (x_{a+1},\dots, x_n)\bigr) \leq n-a$. 
Suppose we have a strict inequality.
Let \[
R:=k[x_1,\dots, x_n]/(I_U, x_{a+1},\dots, x_n) \simeq k[x_1,\dots, x_a]/I'.
\] Then $I'$ is generated in degree $d$, and has at least $a+1$ minimal generators in that degree.
It follows that the top degree of $R$ is strictly less than $a(d-1)$, and so
$I'_{a(d-1)}=k[x_1,\dots, x_a]_{a(d-1)}$ (cf. \cite[Lemma 2.7]{fedorchuk-isaev}). But then 
\[k[x_1,\dots, x_a]_{a(d-1)} \subset (x_{a+1},\dots, x_n) + I_U.\] 
Using \eqref{EE2}, this gives \[
k[x_1,\dots, x_a]_{a(d-1)}  k[x_{a+1},\dots, x_n]_{(n-a)(d-1)} \subset I_U.\] 
Thus every monomial of \[k[z_1,\dots, z_a]_{a(d-1)}  k[z_{a+1},\dots, z_n]_{(n-a)(d-1)}\]
appears with coefficient $0$ in $\AA(U)$, which contradicts \eqref{product}.
\end{proof}

At this point, we can apply \cite[Proposition 3.1]{fedorchuk-isaev} to conclude 
that $U \cap k[x_1,\dots, x_a]_{d}$ contains a regular sequence of length $a$ and
that $U \cap k[x_{a+1},\dots, x_n]_{d}$ contains a regular sequence of length $n-a$. This shows
that $U$ decomposes as a balanced direct sum in the basis $x_1,\dots,x_n$ of $V$. 
However, for the sake of self-containedness, we proceed to give a more direct argument:
%not invoking the full strength of \cite[Prop. 3.1]{fedorchuk-isaev}.

By Claim \eqref{claim}, there exists a regular sequence $s_1,\dots, s_a \in k[x_1,\dots, x_a]_{d}$ such that 
\[
(s_1,\dots, s_a)=\bigl(g_1(x_1,\dots, x_a, 0, \dots, 0),  \dots, g_n(x_1,\dots, x_a, 0, \dots, 0)\bigr)
\]
and a regular sequence $t_1,\dots, t_{n-a} \in k[x_{a+1},\dots, x_n]_{d}$ such that 
\[
(t_1,\dots, t_{n-a})=\bigl(g_1(0,\dots, 0, x_{a+1}, \dots, x_n),  \dots, g_n(0,\dots, 0, x_{a+1}, \dots, x_n)\bigr).
\]
Let \[
W:=\langle s_1,\dots, s_a, t_1,\dots, t_{n-a}\rangle \in \Grass(n, S_d)_{\Res}
\] and let $I_W$ be the ideal generated by $W$.
We are going to prove that $U=W$, which will conclude the proof of the proposition.

Since $\chark(k)=0$ or $\chark(k)>n(d-1)$, Macaulay's theorem applies, and so 
to prove that $U=W$, we need to show that the ideals $I_U$ and $I_W$
coincide in degree $n(d-1)$. For this, it suffices to prove that $(I_W)_{n(d-1)}\subset (I_U)_{n(d-1)}$. 

Since $s_1,\dots, s_a$ is a regular sequence in $k[x_1,\dots,x_a]_{d}$, we have that \[k[x_1,\dots, x_a]_{a(d-1)+1}\subset (s_1,\dots, s_a).\] Similarly, we have that 
\[k[x_{a+1},\dots, x_n]_{(n-a)(d-1)+1}\subset (t_1,\dots, t_{n-a}).\] Together with
\eqref{EE1} and \eqref{EE2}, this gives
\begin{equation}\label{E:J}
(x_1,\dots, x_a)^{a(d-1)+1} + (x_{a+1}, \dots,x_n)^{(n-a)(d-1)+1} \subset I_U \cap I_W.
\end{equation}
Set $J:=(x_1,\dots, x_a)^{a(d-1)+1} + (x_{a+1}, \dots,x_n)^{(n-a)(d-1)+1}$. It remains
to show that \[(I_W)_{n(d-1)}\subset (I_U)_{n(d-1)} + J_{n(d-1)}.\] 
To this end, consider
\[
\sum_{i=1}^{a} q_i s_i +\sum_{j=1}^{n-a} r_j t_j \in (I_W)_{n(d-1)},
\]
where $q_1,\dots, q_a, r_1,\dots, r_{n-a} \in S_{n(d-1)-d}$.
Since $s_1,\dots, s_a \in k[x_1,\dots, x_a]_d$, and we are working modulo $J$, we can assume that $q_i \in (x_{a+1}, \dots, x_n)^{(n-a)(d-1)}$,
for all $i=1,\dots, a$. Similarly, we can assume that $r_j \in (x_1,\dots, x_a)^{a(d-1)}$, for all $j=1,\dots,n-a$. 

By construction, we have $s_1, \dots, s_a \in I_U + (x_{a+1}, \dots, x_n)$ and $t_1,\dots, t_{n-a} \in I_U + (x_1,\dots, x_a)$. Using this, and \eqref{E:J}, we
conclude that 
\[
\sum_{i=1}^{a} q_i s_i +\sum_{j=1}^{n-a} r_j t_j \in I_U + J.
\]
This finishes the proof of the proposition.
\end{proof}

We use  
Theorem \ref{MT1} to give an alternate proof of Corollary \ref{C:mfds-unique} for smooth forms, 
deducing it from the fact that a polynomial
ring over a field is a UFD:
\begin{prop}\label{P:uniqueness} %Let $d\geq 2$. %Suppose $\chark(k)\nmid (n(d-1))!$.  
Keep Assumption \ref{ass-nd}.
Suppose $f\in S_{d+1}$ is a smooth form. Then $f$ has a unique maximally fine direct sum decomposition. 
\end{prop}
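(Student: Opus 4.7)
The plan is to apply Theorem~\ref{MT1} to translate each maximally fine direct sum decomposition of $f$ into a direct product factorization of the associated form $A(f)$ in $\Sym V^{\vee}$, and then to recover uniqueness from the unique factorization of polynomials.

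Suppose $f = f_1 + \cdots + f_r$ and $f = g_1 + \cdots + g_s$ are two maximally fine direct sum decompositions, with essential-variable decompositions $V = \bigoplus_i U_i = \bigoplus_j U'_j$, where $U_i = E(f_i)$ and $U'_j = E(g_j)$. Let $V^{\vee} = \bigoplus_i W_i = \bigoplus_j W'_j$ be the corresponding dual decompositions, with $W_i \subset V^{\vee}$ the annihilator of $\bigoplus_{k \neq i} U_k$ (and similarly for $W'_j$); then $A(f_i) \in \Sym^* W_i$ and $A(g_j) \in \Sym^* W'_j$. A preliminary observation I would record is that smoothness of $f$ implies smoothness of each $f_i$ on $\PP W_i$: since $J_f = J_{f_1} + \cdots + J_{f_r}$ with each $J_{f_i}$ involving only the coordinates of $W_i$, any common zero of $J_f$ in $\PP V^{\vee}$ restricts to a common zero of some $J_{f_i}$ in $\PP W_i$. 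Consequently Theorem~\ref{MT1} applies to every $f_i$ and $g_j$, and maximality --- no $f_i$ is a direct sum in $\Sym^{d+1} U_i$ --- translates into indecomposability of $A(f_i)$ as a direct product in any basis of $W_i$. The two decompositions yield, up to scalar,
\[
A(f) \;=\; A(f_1) \cdots A(f_r) \;=\; A(g_1) \cdots A(g_s).
\]

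Next I would factor $A(f) = c \prod_{\ell=1}^m p_\ell^{e_\ell}$ into irreducibles in the UFD $\Sym V^{\vee}$. Unique factorization forces each $p_\ell$ to divide exactly one $A(f_i)$ and exactly one $A(g_j)$, producing two partitions $\mathcal T,\mathcal T'$ of $\{1,\ldots,m\}$. Consider the common refinement $\mathcal T \wedge \mathcal T'$, and set $A_{i,j} := \prod_{\ell \in T_i \cap T'_j} p_\ell^{e_\ell}$ with essential-variable span $W_{i,j} := E(A_{i,j}) \subseteq V^{\vee}$. Since $A_{i,j}$ divides both $A(f_i)$ and $A(g_j)$, we have $W_{i,j} \subseteq W_i \cap W'_j$; for fixed $i$ the subspaces $W_{i,j}$ thus lie in the distinct summands $W'_j$ and are automatically in direct sum. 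Using conciseness of $A(f_i)$ in $W_i$ --- which holds because smoothness of $f_i$ prevents $J_{f_i}$ from containing any linear form, so $E(A(f_i)) = W_i$ --- the factorization $A(f_i) = \prod_j A_{i,j}$ into factors with pairwise disjoint essential variables yields
\[
W_i \;=\; E(A(f_i)) \;=\; \bigoplus_j W_{i,j}.
\]
Summing over $i$ gives $V^{\vee} = \bigoplus_{i,j} W_{i,j}$, and $A(f) = \prod_{i,j} A_{i,j}$ is a direct product decomposition along this finer direct sum of $V^{\vee}$.

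Applying Theorem~\ref{MT1} in reverse, the decomposition $A(f) = \prod_{i,j} A_{i,j}$ corresponds to a direct sum decomposition of $f$ refining both the $\{f_i\}$- and $\{g_j\}$-decompositions. Maximal fineness forces $\mathcal T = \mathcal T \wedge \mathcal T' = \mathcal T'$, so $U_i = U'_{\sigma(i)}$ for some permutation $\sigma$, and $f_i = g_{\sigma(i)}$ follows by restricting $f$ to each common essential-variable block. The main obstacle I anticipate lies in establishing the inner direct sum $W_i = \bigoplus_j W_{i,j}$: this is where the UFD structure of $\Sym V^{\vee}$ combines most delicately with conciseness of each $A(f_i)$ inside $W_i$, converting divisibility of irreducibles in $\Sym V^{\vee}$ into genuine complementary subspaces of $W_i$.
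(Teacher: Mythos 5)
Your proposal is correct and follows essentially the same route as the paper's own proof: both pass to the factorization $A(f)=\prod_i A(f_i)=\prod_j A(g_j)$ in the UFD $\Sym V^{\vee}$ and use Theorem~\ref{MT1} together with maximal fineness to rule out any $A(f_i)$ sharing irreducible factors with more than one $A(g_j)$. Your version merely packages this via the common refinement of the two induced partitions of the irreducible factors, which the paper handles more tersely by deriving the contradiction directly.
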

\begin{proof}[Proof of Proposition \ref{P:uniqueness}] 
%If $\chark(k)=3$,
%the case of $(n,d)=(2,2)$ is vacuous since no smooth binary cubic will be a direct sum. In all other cases,
%$\chark(k)>n(d-1)$ implies $\chark(k)>\max\{n(d-1),d+1\}$.

Suppose 
$f=f_1+\cdots+f_s=g_1+\cdots+g_t$ are two maximally fine direct sum decompositions.
Then
\[
A(f_1)\cdots A(f_s)=A(g_1)\cdots A(g_t) \in \PP \D_{n(d-1)}, %\Sym^{n(d-1)} V^{\vee},
\]
where $V^{\vee}=\oplus_{i=1}^s E(A(f_{i}))=\oplus_{j=1}^t E(A(g_{j}))$.
Suppose some $A(f_i)$ shares irreducible factors with more than one $A(g_j)$.
Then by the uniqueness of factorization in $\D$, we must have a non-trivial factorization
$A(f_i)=G_1G_2$ such that $E(G_1)\cap E(G_2)=(0)$. Then $A(f_i)$ is a direct
product, and so $f_i$ must be a direct sum by Theorem \ref{MT1}, contradicting the maximality assumption. 
Therefore, no $A(f_i)$ shares an irreducible factor with more than one $A(g_j)$; and, by symmetry, 
no $A(g_j)$ shares an irreducible factor with more than one $A(f_i)$. It follows that $s=t$ and, up to reordering,
$A(f_i)=A(g_i)$, and thus $E(A(f_i))=E(A(g_i))$, for all $i=1,\dots, t$. We conclude that $E(f_i)=E(g_i)$, which using $f_1+\cdots+f_t=g_1+\cdots+g_t$
forces $f_i=g_i$, for all $i=1,\dots, t$. 
\end{proof}

\section{Necessary conditions for direct sum decomposability}
\label{S:necessary}

Our next two results give
easily verifiable necessary conditions for an arbitrary form to be a direct sum. They hold over 
a large enough field, with no restriction on characteristic, and are 
independent of the results of Sections \ref{S:injectivity} and \ref{S:factorization}.
We apply them to prove that determinant and pfaffian-like polynomials are not direct sums
in Corollaries \ref{C:determinant} and \ref{C:pfaffian}.
 We keep notation of \S\ref{S:notation}.

\begin{theorem}\label{MT3} Suppose $f$ is a form in $S=k[x_1,\dots,x_n]_{d}$
and that $\card(k) \gg d$. 
\begin{enumerate}
\item Let $b=\dim_k \nabla f$. If $f$ has a factor $g$ such that 
$\dim_k \nabla g\leq \lfloor \frac{b-1}{2}\rfloor$, then $f$ is not a direct sum.
\item If $f$ has a repeated factor, then $f$ is not a direct sum.
\end{enumerate}
\end{theorem}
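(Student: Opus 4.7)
The plan is to handle the two parts separately, with part (2) relying on a clean UFD argument and part (1) requiring more intricate dimension counting.

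First, for part (2), I would argue as follows. Suppose for contradiction that $f = g^2 h$ with $\deg g \ge 1$ is a nontrivial direct sum $f = f_1(x_1,\dots,x_a) + f_2(x_{a+1},\dots,x_n)$ with $f_1, f_2 \ne 0$. From $g^2 \mid f$ we get $g \mid \partial f/\partial x_i$ for each $i$; when $i \le a$ the partial coincides with $\partial f_1/\partial x_i \in k[x_1,\dots,x_a]$, and Euler's identity (available because $\chark(k) \nmid d$) ensures at least one such partial is nonzero. Since $k[x_1,\dots,x_n]$ is a UFD, every divisor in it of a nonzero polynomial from $k[x_1,\dots,x_a]$ still lies in $k[x_1,\dots,x_a]$ up to units, so $g \in k[x_1,\dots,x_a]$ up to a scalar. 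By the symmetric argument applied to partials with $i > a$, also $g \in k[x_{a+1},\dots,x_n]$ up to a scalar; this forces $g$ to be constant, contradicting $\deg g \ge 1$.

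For part (1), I would first reduce to the case that $g$ and $h := f/g$ are coprime, since otherwise some irreducible factor of $f$ appears with multiplicity at least $2$ and (2) applies. Coprimality implies $g\langle\nabla h\rangle \cap h\langle\nabla g\rangle = 0$, and via Leibniz $\partial f/\partial x_i = g\,\partial h/\partial x_i + h\,\partial g/\partial x_i$ we obtain $b = \dim_k\langle\nabla f\rangle \le \dim_k\langle\nabla g\rangle + \dim_k\langle\nabla h\rangle$. Now assume for contradiction that $f = f_1 + f_2$ is a nontrivial direct sum. The projections $\phi$ and $\psi$ from part (2) yield factorizations $f_1 = \phi(g)\phi(h)$ in $k[x_1,\dots,x_a]$ and $f_2 = \psi(g)\psi(h)$ in $k[x_{a+1},\dots,x_n]$, with all four factors nonzero; applying the product rule and the fact that projection does not increase gradient dimension gives $b_i := \dim_k\langle\nabla f_i\rangle \le \dim_k\langle\nabla g\rangle + \dim_k\langle\nabla h\rangle$ for $i = 1, 2$. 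Combining with $b = b_1 + b_2$ and exploiting the residual relations obtained by applying $\psi$ to the identity $\partial f/\partial x_i \in k[x_1,\dots,x_a]$ for $i \le a$ should, after a descent argument on bidegree components with respect to $V = U \oplus W$, yield the sharp inequality contradicting $\dim_k\langle\nabla g\rangle \le \lfloor (b-1)/2 \rfloor$.

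The hard part will be achieving the sharp threshold in (1): the naive combination of the estimates above gives only $b \le 2(\dim_k\langle\nabla g\rangle + \dim_k\langle\nabla h\rangle)$, which is too weak by a factor of two. The key insight I would pursue is to iteratively extract the vanishing of bidegree pieces of $g$ and $h$ from the relations $\psi(g)\,\psi(\partial h/\partial x_i) + \psi(h)\,\psi(\partial g/\partial x_i) = 0$ (for $i \le a$) and their $\phi$-counterparts, running a descent on the coprime factors of $\psi(g)$ and $\psi(h)$; once enough bidegree components vanish, the remaining structure should force $g$ (or its essential variable space) to be too concentrated to be compatible with both the direct sum decomposition and the hypothesis on $\dim_k\langle\nabla g\rangle$. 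Part (2), by contrast, is conceptually straightforward once the UFD observation is in hand, and should take only a few lines to write out cleanly.
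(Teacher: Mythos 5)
Your part (2) is essentially sound and takes a genuinely different route from the paper's: you use that $g$ divides every first partial of $f=g^2h$, together with the UFD property, to place $g$ simultaneously in $k[x_1,\dots,x_a]$ and $k[x_{a+1},\dots,x_n]$, whereas the paper argues via the $\GG_m$-weight decomposition of $\langle\nabla f\rangle$. One caveat: your argument needs \emph{both} $\langle\nabla f_1\rangle$ and $\langle\nabla f_2\rangle$ to be nonzero, and you secure this with Euler's identity, hence implicitly with $\chark(k)\nmid d$, which is not among the stated hypotheses. The paper needs only one nonzero weight space: once $g$ is (up to scalar) a nonconstant form in $k[x_1,\dots,x_a]$, evaluating $f=g\cdot(gh)$ at $x_1=\dots=x_a=0$ gives $f_2=0$ directly. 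Adding that one line makes your part (2) both complete and asymmetric enough to avoid the issue.

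Part (1), however, is not a proof: you explicitly stop at an estimate that is ``too weak by a factor of two'' and defer the rest to an unspecified descent on bidegree components. The missing idea is a pigeonhole count inside the weight decomposition $\langle\nabla f\rangle=\langle\nabla f_1\rangle\oplus\langle\nabla f_2\rangle$. From $\langle\nabla f\rangle\subset g\langle\nabla h\rangle+h\langle\nabla g\rangle$ one gets
\[
\dim_k\bigl(\langle\nabla f\rangle\cap g\langle\nabla h\rangle\bigr)\ \ge\ b-\dim_k\langle\nabla g\rangle\ \ge\ \left\lceil\tfrac{b+1}{2}\right\rceil\ >\ \tfrac{b}{2},
\]
and since $\dim_k\langle\nabla f_1\rangle+\dim_k\langle\nabla f_2\rangle=b$, a subspace of $\langle\nabla f\rangle$ of dimension $>b/2$ must meet one of the two summands nontrivially. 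This produces a nonzero element $gr$ lying in, say, $k[x_1,\dots,x_a]_{d-1}$, whence $g\in k[x_1,\dots,x_a]$ up to scalar by the UFD property; since $g$ is then a nonconstant form in $x_1,\dots,x_a$ alone dividing $f$, evaluation at $x_1=\dots=x_a=0$ forces $f_2=0$, a contradiction. Note that this argument never bounds $\dim_k\langle\nabla h\rangle$ at all --- only the single hypothesis $\dim_k\langle\nabla g\rangle\le\lfloor(b-1)/2\rfloor$ enters --- which is exactly why it achieves the sharp threshold that your symmetric estimate $b\le 2(\dim_k\langle\nabla g\rangle+\dim_k\langle\nabla h\rangle)$ cannot reach; your reduction to coprime $g,h$ and the projection bookkeeping are then unnecessary.
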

\begin{remark} It is possible to pinpoint precisely how large the cardinality of $k$
has to be, but since we do not have an application for this, we will not do so. 
\end{remark}

\begin{corollary} 
Suppose $f$ is a form with $\dim_k \nabla f \geq 3$, and that 
$\card(k) \gg \deg(f)$. 
If $f$ has a linear factor, then $f$ is not a direct sum.
\end{corollary}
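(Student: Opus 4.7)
The plan is to derive this immediately from Theorem \ref{MT3}(1) applied to the linear factor. The only thing to verify is that any nonzero linear form $g$ has $\dim_k \langle \nabla g\rangle = 1$, which is then small enough compared to $b := \dim_k \langle \nabla f \rangle \geq 3$ to trigger the conclusion.

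Concretely, suppose $f = g \cdot h$ where $g = \sum_{i=1}^n a_i x_i \in S_1$ is the given linear factor, with at least one $a_i \neq 0$. Then $\partial g/\partial x_i = a_i \in k = S_0$, so $\langle \nabla g\rangle \subset S_0$ is the $k$-span of the scalars $a_1,\dots,a_n$. Since $g \neq 0$, this span equals $k$ itself, and in particular $\dim_k \langle \nabla g\rangle = 1$.

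Setting $b = \dim_k \langle \nabla f\rangle$, the hypothesis $b \geq 3$ gives $\lfloor (b-1)/2 \rfloor \geq \lfloor 2/2 \rfloor = 1$, so
\[
\dim_k \langle \nabla g\rangle \;=\; 1 \;\leq\; \left\lfloor \frac{b-1}{2} \right\rfloor.
\]
Under the standing assumption $\card(k) \gg \deg(f)$, Theorem \ref{MT3}(1) now applies directly and yields that $f$ is not a direct sum. There is no real obstacle here; the corollary is simply the specialization of Theorem \ref{MT3}(1) to the easiest possible factor, namely a linear one, where the gradient has the minimal nonzero dimension.
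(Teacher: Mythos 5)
Your proof is correct and is exactly the intended argument: the corollary is the specialization of Theorem \ref{MT3}(1) to a linear factor $g$, for which $\dim_k\langle\nabla g\rangle=1\leq\lfloor(b-1)/2\rfloor$ once $b\geq 3$. The paper treats this as immediate and gives no separate proof, so there is nothing to add.
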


The result of this corollary was proved in \cite[Proposition 2.12]{apolarity} using 
a criterion of Smith and Stong \cite{smith-stong} for indecomposability of  Gorenstein Artin algebras into connected sums.
Our proof of the linear factor case of Theorem \ref{MT3} and the statement for higher degree factors appear to be new.

\begin{proof}[Proof of Theorem \ref{MT3}]
We apply Lemma \ref{L:gradient-ds}. For (1), suppose $f=gh$, %where $g$ is a factor such that
%$\dim_k \nabla g\leq \lfloor \frac{b-1}{2}\rfloor$ 
and that in some basis of $V$ we have \[
%gh=
f=f_1(x_1,\dots,x_a)+f_2(x_{a+1},\dots,x_n)\in S,
\] 
and that $\dim_k \nabla f=b$ while $\dim_k \nabla g \leq \lfloor \frac{b-1}{2}\rfloor$.
Let $\rho$ be the 1-PS subgroup of $\SL(V)$ acting with weight $w_1:=\frac{n-a}{\gcd(n-a,a)}$ on $\{x_i\}_{i=1}^a$
and weight $w_2:=-\frac{a}{\gcd(n-a,a)}$ on $\{x_i\}_{i=a+1}^n$. 
Then $\rho$ fixes $\nabla f$, and 
\[
\langle \nabla f\rangle=\langle \nabla f\rangle_{(w_1d)}\oplus \langle \nabla f\rangle_{(w_2 d)}
\] is the decomposition into the $\rho$-weight-spaces. By the assumption on the cardinality of $k$, these two weight subspaces are distinct.  

Since $\langle\nabla f\rangle \subset g\langle\nabla h\rangle+h \langle\nabla g\rangle$, we have
\[
\dim_k(\langle\nabla f\rangle \cap g\langle\nabla h\rangle) \geq b-\dim_k h\langle\nabla g\rangle=b-\dim_k \langle\nabla g\rangle  \geq \left\lceil \frac{b+1}{2}\right\rceil.
\]
It follows by dimension considerations that 
some nonzero multiple $gr$, where $r\in \langle\nabla h\rangle$, 
belongs to one of the two weight-spaces $\langle \nabla f\rangle_{(w_id)}$ of $\rho$ in $\nabla f$.  
Thus $g$ itself is homogeneous with respect to $\rho$. Again invoking the assumption
on the cardinality of $k$, we conclude that 
either $g\in k[x_1,\dots,x_a]$ or $g\in k[x_{a+1},\dots, x_n]$. This forces
either $f_2=0$ or $f_1=0$, respectively. A contradiction!

For (2), suppose $f$  is a direct sum with a repeated factor $g$. Let $\rho$ be 
the 1-PS of $\SL(V)$ as above. Since $\nabla f \subset (g)$, some nonzero multiple $gr$, where
$r\in S_{d-\deg g}$, belongs to one of the two weight-spaces 
of $\rho$ in $\nabla f$. It follows that $g$ is homogeneous with respect to $\rho$
and so we obtain a contradiction as in (1).
\end{proof}

Our next result needs the following definition that is standard in the theory of GIT stability of
homogeneous forms:

\begin{definition}\label{D:state}
Given a basis $x_1,\dots, x_n$ of $V$ and a nonzero $f\in S_{d}$, we define \emph{the state of $f$} to be
the set of multi-indices $\Xi(f) \subset \{(d_1,\dots,d_n)\in \ZZ_{\geq 0}^n \mid d_1+\cdots+d_n=d\}$ such that 
\[
f= \sum_{(d_1,\dots,d_n) \in \Xi(f)} a_{(d_1,\dots,d_n)}x_1^{d_1}\cdots x_n^{d_n}, \quad \text{where $a_{(d_1,\dots,d_n)}\in k^*$.}
\]
In other words, the state of $f$ is the set of monomials appearing with nonzero coefficient in $f$.
We set $\Xi(0)=\varnothing$.
\end{definition}

\begin{theorem}\label{MT4} Suppose $k\neq \mathbb{F}_2$.
Suppose $f\in S_{d}$, where $d\geq 3$,  is  
such that 
in some basis $x_1,\dots, x_n$ of $V$ the following conditions hold:
\begin{enumerate}
\item \label{E:partial-nonzero}
$\Xi(\partial f/\partial x_i)\neq \varnothing$ for all $1\leq i\leq n$.
\item\label{E:partial-states-non-overlap}
%\begin{equation}\label{E:partial-states-non-overlap}
$\Xi(\partial f/\partial x_i) \cap \Xi(\partial f/\partial x_j) = \varnothing$ for all $1\leq i<j\leq n$.
%\end{equation}
\item \label{E:recoverable-state} 
\begin{multline*} 
\Xi(f)= \Big\{(d_1,\dots,d_n)%\in \ZZ_{\geq 0}^n \ 
\Big\vert \text{ $\chark(k)\nmid d_i$ for some $1\leq i \leq n$ and } \\ \Xi\left(\frac{\partial (x_1^{d_1}\cdots x_n^{d_n})}{\partial x_i}\right) \subset \cup_{j=1}^n \Xi(\partial f/\partial x_j), \ \text{for all $1\leq i \leq n$}\Big\}.\end{multline*}
\item\label{E:connected} The graph with the vertices in $\{1,\dots, n\}$ and the edges given by 
\[
\left\{(ij) \mid \frac{\partial^2 f}{\partial x_i \partial x_j} \neq 0\right\}
\]
is connected. 
\end{enumerate}
Then $f$ is not a direct sum.
\end{theorem}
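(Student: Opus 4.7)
The plan is to argue by contradiction. Assuming $f$ is a direct sum, and using $k\neq \mathbb{F}_2$, Lemma \ref{L:gradient-ds}(\ref{L:grad-gm}) produces a non-trivial one-parameter subgroup $\rho\colon \GG_m \hookrightarrow \SL(V)$, defined over $k$, with precisely two distinct weights $w_+>w_-$ on $V$, satisfying $\rho\cdot\langle\nabla f\rangle=\langle\nabla f\rangle$ but $\rho\cdot \bar f\neq \bar f$. The overall goal is to use conditions \eqref{E:partial-nonzero}--\eqref{E:connected} to show that $\rho$ must after all preserve $\bar f$, the desired contradiction.

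The strategy is to establish two structural facts from \eqref{E:partial-nonzero}--\eqref{E:recoverable-state}: (a) each $\partial f/\partial x_i$ is a $\rho$-eigenvector of some weight $m_i$ in $S_{d-1}$, and (b) each basis vector $x_i$ is a $\rho$-eigenvector of $V$ of weight $a_i\in\{w_+,w_-\}$. For (a), I plan to exploit the rigid combinatorial structure that \eqref{E:partial-states-non-overlap} imposes on the basis $\{\partial f/\partial x_i\}$ of $\langle\nabla f\rangle$: writing $\rho(t)\partial f/\partial x_i=\sum_j N_{ij}(t)\partial f/\partial x_j$, the disjointness of supports identifies the state of the right-hand side with $\bigcup_{j:N_{ij}(t)\neq 0}\Xi(\partial f/\partial x_j)$, and a support/continuity analysis anchored at $N(1)=I$ should force $N(t)$ to be diagonal, giving $\rho(t)\partial f/\partial x_i=t^{m_i}\partial f/\partial x_i$.

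For (b)---which I expect to be the main technical obstacle---I would pick a $\rho$-eigenbasis $y_1,\ldots,y_n$ of $V$ and decompose $\partial/\partial x_i=\psi_i^++\psi_i^-$ into its two weight components (possible because $\rho$ has only two weights on $V$). The weight-homogeneity from (a), applied to the $\rho$-weight decomposition $f=\sum_v f_{(v)}$, translates into the cross-cancellation identities
\[
\psi_i^+ f_{(v+w_+)}+\psi_i^- f_{(v+w_-)}=0\qquad\text{for all }v\neq m_i.
\]
The aim is to couple these identities across $i$ with condition \eqref{E:recoverable-state}, which tightly constrains $\Xi(f)$ via $\cup_j\Xi(\partial f/\partial x_j)$, and with \eqref{E:partial-nonzero} and the uniqueness of partial-expansions from \eqref{E:partial-states-non-overlap}, to rule out both $\psi_i^+$ and $\psi_i^-$ being simultaneously nonzero; equivalently, to place each $x_i$ in a single $\rho$-eigenspace. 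The delicate bookkeeping of weight components, compounded by the need to treat positive characteristic carefully, makes this the core of the proof.

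Once each $x_i$ has a definite $\rho$-weight $a_i$, every monomial $x^d$ is a $\rho$-eigenvector of weight $\sum_j a_j d_j$, and every monomial of $\Xi(\partial f/\partial x_i)$ has weight $m_i$. For $d\in\Xi(f)$ and any $i$ with $d_i\geq 1$ and $\chark(k)\nmid d_i$, the monomial $x^{d-e_i}\in\Xi(\partial f/\partial x_i)$ has weight $m_i$, so $x^d$ has weight $a_i+m_i$. When $(i,j)$ is an edge of the Hessian graph of \eqref{E:connected}, a witnessing $d$ makes both $i$ and $j$ admissible, giving $a_i+m_i=a_j+m_j$; by connectivity, $a_i+m_i$ equals a single value $C$ independent of $i$. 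The first clause of \eqref{E:recoverable-state} supplies an admissible index for every $d\in\Xi(f)$, so every monomial of $f$ has $\rho$-weight $C$. Hence $f$ is $\rho$-homogeneous, forcing $\rho\cdot\bar f=\bar f$---the desired contradiction.
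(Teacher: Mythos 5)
Your overall architecture (show that $f$ would have to be $\rho$-homogeneous and contradict $\rho\cdot\bar f\neq\bar f$) is coherent, and your final step --- propagating the identity $a_i+m_i=a_j+m_j$ along edges of the Hessian graph and using the first clause of \eqref{E:recoverable-state} to cover all of $\Xi(f)$ --- is sound once (a) and (b) are in place. But both (a) and (b) have genuine gaps. For (a), the ``support/continuity analysis anchored at $N(1)=I$'' does not work: for a homomorphism $N\colon\GG_m\to\GL(\langle\nabla f\rangle)$ the off-diagonal entries are Laurent polynomials that automatically vanish at $t=1$ without being identically zero (e.g.\ $N(t)=\bigl(\begin{smallmatrix} t & 1-t\\ 0 & 1\end{smallmatrix}\bigr)$ is a perfectly good $\GG_m$-representation), so disjointness of the supports $\Xi(\partial f/\partial x_j)$ together with $N(1)=I$ does not force $N(t)$ to be diagonal. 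In fact (a) only becomes accessible \emph{after} (b): once each $x_j$ is known to be a $\rho$-eigenvector, $\rho(t)$ preserves monomial supports and condition \eqref{E:partial-states-non-overlap} then does force $\rho(t)\cdot\partial f/\partial x_i=c_i(t)\,\partial f/\partial x_i$. And (b) --- which you correctly identify as the crux --- is left entirely as an ``aim'': the cross-cancellation identities are written down, but no argument is given for why $\psi_i^+$ and $\psi_i^-$ cannot both be nonzero. As it stands this is a plan, not a proof.

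For comparison, the paper's proof avoids the one-parameter subgroup and all change-of-basis bookkeeping: it invokes Lemma \ref{L:gradient-ds}(3) (not part (1)) to produce a form $g$ with $\langle\nabla g\rangle=\langle\nabla f\rangle$ and $\langle\nabla(g-cf)\rangle\neq 0$ for every $c\in k$; condition \eqref{E:recoverable-state} then forces $\Xi(g)\subset\Xi(f)\cup\Xi_p$ (where $\Xi_p$ is the set of monomials with identically vanishing gradient), hence $\Xi(\partial g/\partial x_i)\subset\Xi(\partial f/\partial x_i)$; condition \eqref{E:partial-states-non-overlap} forces $\partial g/\partial x_i=c_i\,\partial f/\partial x_i$; and condition \eqref{E:connected}, via a comparison of second partials, forces all $c_i$ to coincide, contradicting $\nabla(g-c_1f)\neq 0$. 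If you want to salvage your route, the cleanest fix is to run that same chain of deductions on $g=\rho(t)\cdot f$, which satisfies $\langle\nabla g\rangle=\rho(t)\cdot\langle\nabla f\rangle=\langle\nabla f\rangle$; but you would still have to rule out $g-cf$ being a nonzero element with vanishing gradient in positive characteristic, which is precisely what the stronger output of Lemma \ref{L:gradient-ds}(3) is designed to preclude.
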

\begin{remark} In words, \eqref{E:partial-states-non-overlap} says that no two first partials 
of $f$ share a common
monomial, and \eqref{E:recoverable-state} says that any monomial all of whose nonzero first 
partials appear in first partials of $f$ must appear in $f$. 
%In characteristic 0, the last condition 
%of course says that $f$ is not a direct sum in the basis $x_1,\dots,x_n$.
\end{remark}

As an immediate corollary of this theorem, we show that the  
$n\times n$ generic determinant and permanent polynomials, and the $2n\times 2n$ generic pfaffian polynomials, as well as
any other polynomial of the same state, 
are not direct sums when $n\geq 3$. 
\begin{corollary}
[Determinant-like polynomials are not direct sums] 
\label{C:determinant}
Let $n \geq 3$. Suppose $k\neq \mathbb{F}_2$. 
Suppose $S=k[x_{i,j}]_{i,j=1}^{n}$ and $f=\sum_{\sigma \in S_n} a_{\sigma} x_{1,\sigma(1)} \cdots x_{n,\sigma(n)}$,
where $a_{\sigma}\in k^*$. Then $f$ is not a direct sum.  
\end{corollary}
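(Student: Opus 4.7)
The plan is to verify the four hypotheses of Theorem \ref{MT4} for $f$ in the standard basis $\{x_{i,j}\}_{i,j=1}^{n}$ of $V$. Three of the four conditions will be straightforward; the only real content is the recoverable-state condition. Concretely, each partial $\partial f/\partial x_{i,j} = \sum_{\sigma:\sigma(i)=j} a_\sigma \prod_{k\neq i} x_{k,\sigma(k)}$ is a sum of $(n-1)!$ distinct monomials with nonzero coefficients, which gives the non-vanishing condition. From any monomial $\prod_{k\neq i} x_{k,\sigma(k)}$ one can read off both the unique row $i$ and the unique column $j$ missing from it, so the states $\Xi(\partial f/\partial x_{i,j})$ are pairwise disjoint. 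For the connectivity condition, $\partial^2 f/\partial x_{i,j}\partial x_{k,l}$ vanishes precisely when $i=k$ or $j=l$, and for $n\geq 3$ any two variables $x_{i,j}$ and $x_{i',j'}$ are joined by a path of length at most two through any $x_{k,l}$ with $k\notin\{i,i'\}$ and $l\notin\{j,j'\}$ (such a $(k,l)$ exists because $n\geq 3$).

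The remaining and key task is to show that the permutation monomials are exactly the multi-indices $D$ satisfying (a) $\chark(k)\nmid d_{i,j}$ for some $(i,j)$, and (b) $\Xi(\partial x^D/\partial x_{i,j})\subset \bigcup_{k,l} \Xi(\partial f/\partial x_{k,l})$ for all $(i,j)$. The ``only if'' direction will be immediate: in a permutation monomial all nonzero exponents equal $1$, and each of its nonzero first partials is itself a partial-permutation monomial of degree $n-1$ that already appears in the right-hand side.

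The ``if'' direction is where I expect the main obstacle. First observe that the right-hand side of (b) consists of squarefree degree $n-1$ monomials involving $n-1$ distinct rows and $n-1$ distinct columns, so the existence of a single nonzero partial of $x^D$ forces $\deg(x^D)=n$. If $x^D$ is squarefree but not a permutation monomial, some row carries $\geq 2$ of its variables while another row is entirely absent; removing a variable from the full row leaves a quotient missing at least two rows, which cannot be a partial-permutation monomial. If instead some $d_{p,q}\geq 2$, choose $(p',q')$ with $\chark(k)\nmid d_{p',q'}$ via (a); condition (b) at $(p',q')$ forces $x^{D-e_{p',q'}}$ to be squarefree, which a short bookkeeping pins down to $(p',q')=(p,q)$, $d_{p,q}=2$, $\chark(k)\neq 2$, and $d_{k,l}\leq 1$ for all $(k,l)\neq (p,q)$. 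Since $n\geq 3$, there must exist some other variable $x_{k,l}$ in $x^D$; but (b) applied at $(k,l)$—whose partial is nonzero because $d_{k,l}=1$ and $\chark(k)\nmid 1$—requires a squarefree partial, contradicting the surviving $x_{p,q}^2$ factor. The characteristic-sensitive bookkeeping is the main obstacle here, and condition (a) is exactly what rules out the escape route in which all partials of $x^D$ vanish identically.
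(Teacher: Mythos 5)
Your overall route is exactly the paper's: the printed proof of Corollary \ref{C:determinant} consists of the single sentence that $f$ satisfies the hypotheses of Theorem \ref{MT4}, so the substance lies entirely in the verification you supply. Your treatment of conditions \eqref{E:partial-nonzero}, \eqref{E:partial-states-non-overlap}, \eqref{E:connected}, and of the non-squarefree subcase of the recoverable-state condition \eqref{E:recoverable-state} (including the role of the characteristic hypothesis), is correct.

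There is, however, a genuine error in the squarefree subcase of condition \eqref{E:recoverable-state}. First, the dichotomy ``squarefree but not a permutation monomial $\Rightarrow$ some row carries $\geq 2$ variables while another row is absent'' is incomplete: the rows can form a system of distinct representatives while a column repeats (e.g.\ $x_{1,1}x_{2,1}x_{3,2}$ for $n=3$), so you need ``some row \emph{or} some column,'' handled by row--column symmetry. More seriously, the step ``removing a variable from the full row leaves a quotient missing at least two rows'' fails. Take $n=3$ and $x^D=x_{1,1}x_{1,2}x_{2,3}$: removing $x_{1,1}$ from the overloaded row $1$ leaves $x_{1,2}x_{2,3}$, which misses only row $3$ and is an honest partial-permutation monomial lying in $\Xi(\partial f/\partial x_{3,1})$. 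The correct move is to differentiate with respect to a variable \emph{outside} the overloaded row: if every variable of $x^D$ lies in row $r$, then any nonzero partial involves only one row and so cannot lie in $\cup_{k,l}\Xi(\partial f/\partial x_{k,l})$; otherwise pick $x_{k,l}$ with $k\neq r$ (its exponent is $1$, so the partial is nonzero), and the quotient still contains two variables from row $r$, hence meets at most $n-2$ distinct rows and is not a partial-permutation monomial. With this repair and its column analogue, your verification goes through and the conclusion is correct.
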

%\begin{proof} 
%It is easy 
%to see that $f$ satisfies all conditions %\eqref{E:partial-states-non-overlap} and \eqref{E:recoverable-state}
%of Theorem \ref{MT4}. 
%\end{proof}

\begin{corollary}[Pfaffian-like polynomials are not direct sums] 
\label{C:pfaffian}
Let $n \geq 3$. Suppose $k\neq \mathbb{F}_2$. 
Suppose $S=k[x_{i,j}]_{1\leq i < j\leq 2n}$, 
where we set $x_{j,i}:=x_{i,j}$ for $j>i$, and \[
f=\sum_{\sigma \in S_{2n}} a_{\sigma} x_{\sigma(1),\sigma(2)} \cdots x_{
\sigma(2n-1),\sigma(2n)},\]
where $a_{\sigma}\in k^*$.  Then $f$ is not a direct sum.  
\end{corollary}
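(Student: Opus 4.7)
The plan is to apply Theorem \ref{MT4} in the basis $\{x_{i,j}\}_{1\le i<j\le 2n}$ of $V$. The first observation will be that, under our hypotheses, the state $\Xi(f)$ consists exactly of the ``perfect matching monomials'' $x_{i_1,j_1}\cdots x_{i_n,j_n}$ indexed by perfect matchings $\{\{i_s,j_s\}\}_{s=1}^n$ of $\{1,\dots,2n\}$ (the coefficient of such a monomial is $\sum_{\sigma\mapsto M}a_\sigma$, which we assume to be nonzero). With this identification in hand, each of the four hypotheses \eqref{E:partial-nonzero}--\eqref{E:connected} of Theorem \ref{MT4} reduces to an elementary combinatorial statement about $K_{2n}$.

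The easy hypotheses \eqref{E:partial-nonzero}, \eqref{E:partial-states-non-overlap}, and \eqref{E:connected} I will dispense with quickly. Every edge $\{i,j\}$ lies in some perfect matching (since $2n-2\ge 4$ is even), giving \eqref{E:partial-nonzero}. A monomial in $\Xi(\partial f/\partial x_{i,j})$ is a matching on $\{1,\dots,2n\}\setminus\{i,j\}$, so its vertex support recovers $\{i,j\}$ uniquely, giving \eqref{E:partial-states-non-overlap}. And $\partial^2 f/\partial x_{i,j}\partial x_{k,l}\ne 0$ iff $\{i,j\}\cap\{k,l\}=\varnothing$, so the graph of \eqref{E:connected} is the ``disjointness graph'' on edges of $K_{2n}$; for $n\ge 3$ it has diameter at most $2$, since the complement of any two edges contains at least three vertices, hence an edge disjoint from both.

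The main obstacle I expect is \eqref{E:recoverable-state}: I must show that every monomial $m=\prod_l x_l^{d_l}$ of total degree $n$ whose ``admissible'' (i.e., nonzero in characteristic $\chark(k)$) first partials all land in $\bigcup\Xi(\partial f/\partial x_{i,j})$ is itself a perfect matching monomial. My strategy will be first to force $m$ to be squarefree, and then to force the underlying pairs to form a matching. For squarefreeness, I will suppose some $d_{l_0}\ge 2$ and fix, using the first clause of \eqref{E:recoverable-state}, an index $j$ with $\chark(k)\nmid d_j$. Since the partials of $f$ are squarefree, $\partial m/\partial x_j$ must be squarefree too, leading to contradictions in both cases: if $j=l_0$, squarefreeness forces $d_j=2$ and $d_l\le 1$ for all $l\ne j$, after which differentiating with respect to any other variable in the support of $m$ retains $x_j^2$; if $j\ne l_0$, then $\partial m/\partial x_j$ still retains $x_{l_0}^{d_{l_0}}$, already not squarefree. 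Once $m=x_{i_1,j_1}\cdots x_{i_n,j_n}$ is squarefree, the requirement that every $\partial m/\partial x_{i_\ell,j_\ell}=\prod_{s\ne\ell}x_{i_s,j_s}$ be a perfect matching monomial on the complement of some edge forces the $n$ pairs $\{i_s,j_s\}$ to be pairwise disjoint, and hence themselves a perfect matching of $\{1,\dots,2n\}$. With all four hypotheses of Theorem \ref{MT4} verified, the corollary will follow at once.
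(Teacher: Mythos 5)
Your proposal is correct and follows exactly the route the paper takes: the paper's entire proof is the assertion that $f$ satisfies the four conditions of Theorem \ref{MT4}, and your plan supplies the combinatorial verifications (vertex-support recovery of the differentiated edge for condition \eqref{E:partial-states-non-overlap}, squarefreeness plus pairwise disjointness for \eqref{E:recoverable-state}, and the diameter-two disjointness graph for \eqref{E:connected}) that the paper leaves as "easy to see." You also rightly flag the one point the paper glosses over, namely that the corollary implicitly assumes the collected coefficients $\sum_{\sigma\mapsto M}a_\sigma$ of the matching monomials are nonzero, which is what the phrase "any other polynomial of the same state" is meant to guarantee.
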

\begin{proof}[Proof of both corollaries]
It is easy 
to see that $f$ satisfies all conditions 
of Theorem \ref{MT4}. 
\end{proof}

Corollaries \ref{C:determinant} and \ref{C:pfaffian} are generalizations of \cite[Corollary 1.2]{apolarity}, whose proof 
relies on a theorem of Shafiei \cite{shafiei} saying that the apolar ideals of the generic determinant and permanent are generated in degree $2$; 
our approach is independent of Shafiei's results.

\begin{proof}[Proof of Theorem \ref{MT4}]
If $\chark(k)=p$, we 
 set \[
 \Xi_p:=\{(d_1,\dots,d_n) \mid \text{ $p$ divides $d_i$ for all $1\leq i \leq n$}\}
 \] to be the set of all monomials whose gradient point is trivial.

Suppose $f$ is a direct sum. Note that Assumption \eqref{E:partial-nonzero}
implies that $\dim_k \nabla f=n$. Then by Lemma \ref{L:gradient-ds}(3), and the assumption 
that $k\setminus \{0,1\} \neq \varnothing$,
there exists a form $g$ such that $\nabla g=\nabla f$ and $\nabla(g-cf)\neq 0$
for all $c\in k$.
Since $\nabla g=\nabla f$, 
then by Assumption \eqref{E:recoverable-state}, we must have 
$\Xi(g) \subset \Xi(f) \cup \Xi_p$. Then $\Xi(\partial g/\partial x_i) \subset
\Xi(\partial f/\partial x_i)$.

 Since $\partial g/\partial x_i \in \langle \nabla f\rangle$, Assumption \eqref{E:partial-states-non-overlap}
implies that in fact 
\[
\frac{\partial g}{\partial x_i} = c_i \frac{\partial f}{\partial x_i}, \ \text{for some $c_i\in k$}.
\]
Comparing the second partials, and using Assumption \eqref{E:connected} 
we conclude that $c_i=c_j$ for all $1\leq i< j\leq n$. 
We obtain $\nabla (g-c_1f)=(0)$, which is a contradiction.
\end{proof}

\section{Finding a balanced direct product decomposition algorithmically}
\label{S:algorithm}
In this section, we show how Theorem \ref{MT1} reduces the problem of 
finding a direct sum decomposition of a given smooth form\footnote{We recall that 
we call a homogeneous form $f\in k[x_1,\dots, x_n]$ smooth if it defines a smooth hypersurface
in $\PP^{n-1}$.} $f$ to a polynomial factorization problem.
To begin, suppose that we are given a smooth form $f \in \Sym^{d+1} V$ in some basis % $x_1,\dots, x_n$ of 
of $V$.  Then the associated form $A(f)$ is computed in the dual basis of $V^{\vee}$ as the form apolar to the Jacobian ideal $J_f$.  In fact, since we know that $A(f)$ has degree $n(d-1)$,
we reduce to a linear-algebraic problem of finding  the unique up to a scalar form of degree $n(d-1)$ apolar to the space $(J_f)_{n(d-1)}$.

To apply Theorem \ref{MT1}, we now need to determine if
 $A(f)\in \Sym^{n(d-1)} V^{\vee}$ decomposes as a balanced direct product, and if it does, then in what basis of $V^{\vee}$.  The following 
simple lemma explains how to do it (cf. \S\ref{S:notation} for the definition of the space of essential variables): %determine whether $A(f)$ is a balanced direct product and how to compute
%the balanced direct product decomposition, and the corresponding basis of $V^{\vee}$, when it exists.
\begin{lemma}\label{L:balanced-product} 
Suppose $\chark(k)\nmid (n(d-1))!$. 
For a smooth $f\in S_{d+1}$, the associated form $A(f)$ is a balanced direct product if and only if there is a 
non-trivial factorization $A(f)=G_1G_2$ such that 
\begin{equation}\label{direct-sum-A}
V=\bigl(G_1^{\perp}\bigr)_1+ \bigl(G_2^{\perp}\bigr)_1, \ \text{or, equivalently,}\quad E(G_1)\cap E(G_2)=(0) \subset V^{\vee}.
\end{equation}
Moreover, in this case, we have $\bigl(G_1^{\perp}\bigr)_1 \cap 
\bigl(G_2^{\perp}\bigr)_1=(0) \subset V$ and
$A(f)$ decomposes as a balanced direct product in any basis of $V^{\vee}$ such that its dual basis is compatible with the direct sum decomposition in 
Equation \eqref{direct-sum-A}.
\end{lemma}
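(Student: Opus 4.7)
The plan is to deduce everything from three ingredients together with Theorem \ref{MT1}: linear duality between $V$ and $V^{\vee}$, conciseness of $A(f)$ for smooth $f$, and a Leibniz-rule inclusion for essential variables of a product. First I would record that the two conditions in Equation \eqref{direct-sum-A} are equivalent by standard linear duality, since for any nonzero $F\in\D$ the subspace $(F^{\perp})_{1}\subset V$ is exactly the annihilator of $E(F)\subset V^{\vee}$ under the natural pairing; hence $(G_{1}^{\perp})_{1}+(G_{2}^{\perp})_{1}=V$ iff $E(G_{1})\cap E(G_{2})=(0)$.

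The forward direction is immediate from the definitions: if in some basis $z_{1},\dots,z_{n}$ of $V^{\vee}$ we have $A(f)=G_{1}G_{2}$ with $G_{1}\in k[z_{1},\dots,z_{a}]$ and $G_{2}\in k[z_{a+1},\dots,z_{n}]$, then $E(G_{i})$ lies in the corresponding coordinate subspace of $V^{\vee}$, so the two essential-variable subspaces are disjoint.

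For the backward direction, suppose $A(f)=G_{1}G_{2}$ is a nontrivial factorization with $E(G_{1})\cap E(G_{2})=(0)$. Since $f$ is smooth, the Jacobian ideal $J_{f}$ is generated in degrees $\geq 2$, so $(A(f)^{\perp})_{1}=(J_{f})_{1}=0$, which means $A(f)$ is concise: $E(A(f))=V^{\vee}$. An elementary Leibniz-rule computation then gives the inclusion $E(G_{1}G_{2})\subset E(G_{1})+E(G_{2})$: every $(\deg A(f)-1)$-st partial of $G_{1}G_{2}$ is a sum of products of partials of $G_{1}$ and $G_{2}$, and for the result to be linear all but one derivative must fall on a single factor, producing a scalar multiple of an element of $E(G_{1})$ or $E(G_{2})$. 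Combining these yields $V^{\vee}=E(G_{1})+E(G_{2})$, which together with the hypothesis upgrades to $V^{\vee}=E(G_{1})\oplus E(G_{2})$. Choosing any basis of $V^{\vee}$ adapted to this splitting, and using that each $G_{i}$ lies in $\Sym E(G_{i})$ (by the characteristic assumption, cf.\ \S\ref{S:notation}), the factorization $A(f)=G_{1}G_{2}$ is manifestly a direct product in that basis, hence balanced by the implication $(4)\Rightarrow(3)$ of Theorem \ref{MT1}.

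The ``moreover'' statements then drop out by dualizing $V^{\vee}=E(G_{1})\oplus E(G_{2})$ to $V=(G_{1}^{\perp})_{1}\oplus(G_{2}^{\perp})_{1}$; in particular the intersection is trivial, and any basis of $V^{\vee}$ whose dual is adapted to this decomposition of $V$ automatically pairs $\langle z_{1},\dots,z_{a}\rangle$ with $E(G_{1})$ and $\langle z_{a+1},\dots,z_{n}\rangle$ with $E(G_{2})$, exhibiting $A(f)=G_{1}G_{2}$ as a balanced direct product in that basis. The main obstacle is bookkeeping---keeping careful track of where essential variables and their annihilators live, and verifying the Leibniz inclusion---rather than any substantive technical difficulty, once the polar pairing and conciseness framework of \S\ref{S:notation} are in hand.
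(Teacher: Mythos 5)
Your proof is correct and is essentially the paper's own argument in dual form: your Leibniz inclusion $E(G_1G_2)\subset E(G_1)+E(G_2)$ together with conciseness $E(A(f))=V^{\vee}$ is precisely the dual of the paper's one-line observation $\bigl(G_1^{\perp}\bigr)_1\cap\bigl(G_2^{\perp}\bigr)_1\subset \bigl(A(f)^{\perp}\bigr)_1=(J_f)_1=(0)$, and both then invoke Theorem \ref{MT1} for balancedness. No gaps.
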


\begin{proof} The equivalence of the two conditions in \eqref{direct-sum-A} follows from the 
fact that $E(G_i)\subset V^{\vee}$ is dual to $(G_i^{\perp})_1\subset V$.
The claim now follows from definitions and Theorem \ref{MT1} 
by observing that
for any non-trivial factorization $A(f)=G_1G_2$, we have
$
\bigl(G_1^{\perp}\bigr)_1 \cap \bigl(G_2^{\perp}\bigr)_1 \subset \bigl(A(f)^{\perp}\bigr)_1=(J_f)_1=(0)
$.
\end{proof}

\subsection{An algorithm for direct sum decompositions}  
Suppose $k$ is a field, with either 
$\chark(k)=0$ or $\chark(k)>\max\{n(d-1),d+1\}$, for
which there exists a polynomial factorization algorithm. Let $f\in k[x_1,\dots,x_n]_{d+1}$,
where $d\geq 2$. 

\vspace{-0.5pc}
\subsubsection*{Step 1:} Compute $J_f=(\partial f/\partial x_1,\dots, \partial f/\partial x_n)$ 
up to degree $n(d-1)+1$. If \[
(J_f)_{n(d-1)+1} \neq k[x_1,\dots, x_n]_{n(d-1)+1},
\] then 
$f$ is not smooth and we stop; otherwise, continue. 

\vspace{-0.5pc}

\subsubsection*{Step 2:} Compute $A(f)$ as the dual to $(J_f)_{n(d-1)}$:
\[
\langle A(f) \rangle = \left\{ T\in k[z_1,\dots, z_n]_{n(d-1)} \mid g\circ T = 0,  \ \text{for all $g\in (J_f)_{n(d-1)}$}\right\}.
\]
This can be done as follows: 

Compute the degree $n(d-1)$ part of the Gr\"obner basis of the Jacobian ideal
$J_f$, say, using the graded reverse lexicographic order. Suppose
$\{m_j\}_{j=1}^N$, where $N=\dim_k k[x_1,\dots, x_n]_{n(d-1)}$, are the monomials
in $k[x_1,\dots, x_n]_{n(d-1)}$ in the given order. Then
the output will be of the form 
\[
(J_f)_{n(d-1)}=(m_1-c_1m_i,\dots ,m_{i-1}-c_{i-1} m_i, m_{i+1},\dots ,m_N),
\]
where $m_i$ is the unique monomial of degree $n(d-1)$ which is not an initial monomial
of an element of $J_{n(d-1)}$. 
Let $\{\widehat{m}_j\}_{j=1}^N$ be the dual monomials in $k[z_1,\dots, z_n]$. Namely,
\[
m_i \circ \widehat{m}_j= \begin{cases} 1,& i=j, \\ 0, & i\neq j\end{cases}.
\]
For example, 
\[
\widehat{x_1^{d_1}\cdots x_n^{d_n}}=\frac{1}{(d_1)!\cdots (d_n)!} z_1^{d_1}\cdots z_n^{d_n}.
\]
Then 
\[
A(f)=\widehat{m_i}+\sum_{j<i} c_j \widehat{m_j}.
\]

\vspace{-0.5pc}

\subsubsection*{Step 3:} Compute the irreducible factorization of $A(f)$ in $k[z_1,\dots, z_n]$ 
and check for the existence of balanced direct product
factorizations using Lemma \ref{L:balanced-product}. If any exist, then $f$ is a direct sum; otherwise,
$f$ is not a direct sum. 

\vspace{-0.5pc}

\subsubsection*{Step 4:} For every balanced direct product factorization of $A(f)$,
Lemma \ref{L:balanced-product}  
gives a basis of $V$ in which $f$ decomposes as a direct sum.

\smallskip 

The above algorithm was implemented in a Macaulay 2 package written by Justin Kim and Zihao Fang (its source code is available upon request). 
In what follows, we give a few examples
of the algorithm in action.
\begin{remark} Jaros{\l}aw Buczy\'nski has pointed out that already \emph{Step 2} in the above algorithm is 
computationally highly expensive when $n$ and $d$ are large. However, it is reasonably fast when 
both $n$ and $d$ are small, with Example \ref{E:random} below taking only a few seconds.
\end{remark}

\begin{example}\label{E:intro}
Consider $f={x}_{1}^{3}+3 {x}_{1}^{2} {x}_{2}+3 {x}_{1} {x}_{2}^{2}+2 {x}_{2}^{3}+3
      {x}_{1}^{2} {x}_{3}+6 {x}_{1} {x}_{2} {x}_{3}+4 {x}_{2}^{2} {x}_{3}+3
      {x}_{1} {x}_{3}^{2}+4 {x}_{2} {x}_{3}^{2}+2 {x}_{3}^{3}$
      in $\QQ[x_1,x_2,x_3]$ from the introduction. Its discriminant is nonzero and so we
      can compute the associated form of $f$.
      We have
\begin{multline*}
      A(f)=-{z}_{1}^{3}+{z}_{1}^{2} {z}_{2}+\frac{1}{2} {z}_{1} {z}_{2}^{2}+{z}_{1}^{2}
       {z}_{3}-2 {z}_{1} {z}_{2} {z}_{3}+\frac{1}{2}{z}_{1} {z}_{3}^{2}\\=
       \frac{1}{2} z_1 
       (-{z}_{1}^{2}+{z}_{1} {z}_{2}+\frac{1}{2} {z}_{2}^{2}+{z}_{1} {z}_{3}-2 {z}_{2} {z}_{3}+\frac{1}{2} {z}_{3}^{2}).%= \frac{1}{4} z_1 ((z_1+z_2+z_3)^2-4z_1(z_1+z_2+z_3)+z_2^2)
\end{multline*}
The first factor is a polynomial in $z_1$ and the second factor is a polynomial in $z_1-z_2$ 
and $z_1-z_3$. It follows that $A(f)$ is a balanced direct product and so $f$ is a direct sum.
Indeed, the basis of $V$ dual to the basis $\{z_1, z_2-z_1, z_3-z_1\}$ of $V^{\vee}$ is precisely
$\{x_1+x_2+x_3, x_2, x_3\}$. In this basis, 
the original polynomial becomes a direct sum:
\[
f=(x_1+x_2+x_3)^{3}+{x}_{2}^{3}+{x}_{2}^{2} {x}_{3}+{x}_{2} {x}_{3}^{2}+{x}_{3}^{3}.
\]
\end{example}

\begin{example}[Binary quartics]
\label{E:non-closed}
Suppose $k$ is an algebraically closed field of characteristic $0$. Then every smooth binary quartic has a standard form
\begin{equation*}
f_t=x_1^4+x_2^4+tx_1^2 x_2^2, \quad t\neq \pm 2.
\end{equation*}
Up to a scalar, the associated form of $F_t$ is  
\[
A(f_t)=t(z_1^4+z_2^4)-12z_1^2z_2^2.
\]
Clearly, $A(f_t)$ is singular if and only if $t=0$, or $t=\pm 6$. For these values of $t$,
$A(f_t)$ is in fact a balanced direct product, and so $f_t$ is a direct sum. Namely, up to scalars, we have:
\begin{align*}
A(f_0) \,&=z_1^2z_2^2, && f_0 =x_1^4+x_2^4, \\
A(f_6) \,&=(z_1^2-z_2^2)^2=(z_1-z_2)^2(z_1+z_2)^2, && f_6 =(x_1-x_2)^4+(x_1+x_2)^4, \\
A(f_{-6}) &=(z_1^2+z_2^2)^2=(z_1-iz_2)^2(z_1+iz_2)^2, && f_{-6} =(x_1-ix_2)^4+(x_1+ix_2)^4.
\end{align*}
Note that over $\RR$, the associated form $A(f_{-6})=(z_1^2+z_2^2)^2$ is not a balanced direct product. Hence $f_{-6}$ is not a direct sum over $\RR$ by Theorem \ref{MT1}.  
Since the apolar ideal of $f_{-6}$ is the same over $\RR$ and over $\CC$, this example illustrates that the direct sum 
decomposability criterion of \cite{apolarity} fails over non-closed fields.
\end{example}

\begin{example}\label{E:random} Consider the following element in $\QQ[x_1,x_2,x_3,x_4]_{4}$:
{\footnotesize
\begin{multline*}
f={x}_{1}^{4}+4 {x}_{1}^{3} {x}_{2}+6 {x}_{1}^{2} {x}_{2}^{2}+4 {x}_{1} {x}_{2}^{3}+2 {x}_{2}^{4}+8 {x}_{1}^{3} {x}_{3}+24 {x}_{1}^{2} {x}_{2} {x}_{3}+24 {x}_{1}
      {x}_{2}^{2} {x}_{3}+8 {x}_{2}^{3} {x}_{3}+24 {x}_{1}^{2} {x}_{3}^{2} \\ +48 {x}_{1} {x}_{2} {x}_{3}^{2}+24 {x}_{2}^{2} {x}_{3}^{2}+32 {x}_{1} {x}_{3}^{3}+32 {x}_{2}
      {x}_{3}^{3}+17 {x}_{3}^{4}-12 {x}_{1}^{3} {x}_{4}-36 {x}_{1}^{2} {x}_{2} {x}_{4}-36 {x}_{1} {x}_{2}^{2} {x}_{4}-12 {x}_{2}^{3} {x}_{4} \\ -72 {x}_{1}^{2} {x}_{3}
      {x}_{4}-144 {x}_{1} {x}_{2} {x}_{3} {x}_{4}  -72 {x}_{2}^{2} {x}_{3} {x}_{4}-144 {x}_{1} {x}_{3}^{2} {x}_{4}-144 {x}_{2} {x}_{3}^{2} {x}_{4}-96 {x}_{3}^{3}
      {x}_{4}+54 {x}_{1}^{2} {x}_{4}^{2} +108 {x}_{1} {x}_{2} {x}_{4}^{2}\\ +54 {x}_{2}^{2} {x}_{4}^{2}+216 {x}_{1} {x}_{3} {x}_{4}^{2}+217 {x}_{2} {x}_{3} {x}_{4}^{2}+216
      {x}_{3}^{2} {x}_{4}^{2}-108 {x}_{1} {x}_{4}^{3}-108 {x}_{2} {x}_{4}^{3}-216 {x}_{3} {x}_{4}^{3}+82 {x}_{4}^{4}.
      \end{multline*}
      }
      Then its associated form is
{\footnotesize
\begin{multline*}
     A(f)= 9785 {z}_{1}^{8}-32316 {z}_{1}^{7} {z}_{2}+26370 {z}_{1}^{6} {z}_{2}^{2}-260 {z}_{1}^{5} {z}_{2}^{3}+15 {z}_{1}^{4} {z}_{2}^{4}+24 {z}_{1}^{3} {z}_{2}^{5}-19488
      {z}_{1}^{7} {z}_{3}+40920 {z}_{1}^{6} {z}_{2} {z}_{3} \\ -25620 {z}_{1}^{5} {z}_{2}^{2} {z}_{3}-180 {z}_{1}^{4} {z}_{2}^{3} {z}_{3}+60 {z}_{1}^{3} {z}_{2}^{4}
      {z}_{3}-12 {z}_{1}^{2} {z}_{2}^{5} {z}_{3}+8730 {z}_{1}^{6} {z}_{3}^{2}-11910 {z}_{1}^{5} {z}_{2} {z}_{3}^{2}+6390 {z}_{1}^{4} {z}_{2}^{2} {z}_{3}^{2} \\+30
      {z}_{1}^{3} {z}_{2}^{3} {z}_{3}^{2}-595 {z}_{1}^{5} {z}_{3}^{3}-495 {z}_{1}^{4} {z}_{2} {z}_{3}^{3}+15 {z}_{1}^{3} {z}_{2}^{2} {z}_{3}^{3}-5 {z}_{1}^{2}
      {z}_{2}^{3} {z}_{3}^{3}+15 {z}_{1}^{4} {z}_{3}^{4}+120 {z}_{1}^{3} {z}_{2} {z}_{3}^{4}+12 {z}_{1}^{3} {z}_{3}^{5}-12 {z}_{1}^{2} {z}_{2} {z}_{3}^{5} \\-4194
      {z}_{1}^{7} {z}_{4}-9000 {z}_{1}^{6} {z}_{2} {z}_{4}+17820 {z}_{1}^{5} {z}_{2}^{2} {z}_{4}-360 {z}_{1}^{4} {z}_{2}^{3} {z}_{4}+90 {z}_{1}^{3} {z}_{2}^{4}
      {z}_{4}-7200 {z}_{1}^{6} {z}_{3} {z}_{4}+21600 {z}_{1}^{5} {z}_{2} {z}_{3} {z}_{4}\\-17280 {z}_{1}^{4} {z}_{2}^{2} {z}_{3} {z}_{4}+6480 {z}_{1}^{5} {z}_{3}^{2}
      {z}_{4}-8640 {z}_{1}^{4} {z}_{2} {z}_{3}^{2} {z}_{4}+4320 {z}_{1}^{3} {z}_{2}^{2} {z}_{3}^{2} {z}_{4}-720 {z}_{1}^{4} {z}_{3}^{3} {z}_{4}+90 {z}_{1}^{3}
      {z}_{3}^{4} {z}_{4}-7395 {z}_{1}^{6} {z}_{4}^{2}\\+7140 {z}_{1}^{5} {z}_{2} {z}_{4}^{2}+2970 {z}_{1}^{4} {z}_{2}^{2} {z}_{4}^{2}-60 {z}_{1}^{3} {z}_{2}^{3}
      {z}_{4}^{2}+15 {z}_{1}^{2} {z}_{2}^{4} {z}_{4}^{2}+3120 {z}_{1}^{5} {z}_{3} {z}_{4}^{2}-720 {z}_{1}^{4} {z}_{2} {z}_{3} {z}_{4}^{2}-2880 {z}_{1}^{3} {z}_{2}^{2}
      {z}_{3} {z}_{4}^{2}\\+1080 {z}_{1}^{4} {z}_{3}^{2} {z}_{4}^{2}-1440 {z}_{1}^{3} {z}_{2} {z}_{3}^{2} {z}_{4}^{2}+720 {z}_{1}^{2} {z}_{2}^{2} {z}_{3}^{2}
      {z}_{4}^{2}-120 {z}_{1}^{3} {z}_{3}^{3} {z}_{4}^{2}+15 {z}_{1}^{2} {z}_{3}^{4} {z}_{4}^{2}-1800 {z}_{1}^{5} {z}_{4}^{3}+2880 {z}_{1}^{4} {z}_{2} {z}_{4}^{3}\\+1440
      {z}_{1}^{4} {z}_{3} {z}_{4}^{3}-1440 {z}_{1}^{3} {z}_{2} {z}_{3} {z}_{4}^{3}+30 {z}_{1}^{4} {z}_{4}^{4}+240 {z}_{1}^{3} {z}_{2} {z}_{4}^{4}+120 {z}_{1}^{3}
      {z}_{3} {z}_{4}^{4}-120 {z}_{1}^{2} {z}_{2} {z}_{3} {z}_{4}^{4}+36 {z}_{1}^{3} {z}_{4}^{5}+2 {z}_{1}^{2} {z}_{4}^{6}.    
      \end{multline*}}
One checks that $A(f)=G_1G_2$, where $G_1=z_1^2$ with $E(G_1)=z_1$,
and $E(G_2)=\langle   3z_3+2z_4, 3z_2+z_4, 3z_1+z_4\rangle$, is a balanced direct product factorization of $A(f)$. It follows
that $f$ is a direct sum. In fact, $f$ is projectively equivalent to 
$
      x_1^4+(x_2^4+x_3^4+x_4^4+x_2x_3x_4^2).
$
\end{example}

\vspace{-0.5pc}

\section*{Acknowledgments} The author is grateful to Jarod Alper for an introduction to the subject, 
Alexander Isaev for numerous stimulating discussions that inspired this work, and Zach Teitler for 
questions that motivated most of the results in Section \ref{S:necessary}. We thank 
Zhenjian Wang  for alerting us to his earlier work \cite{wang} related to Section \ref{S:injectivity} and interesting questions. 

The author was partially supported by  
the NSA Young Investigator grant H98230-16-1-0061 and Alfred P. Sloan Research Fellowship.
Justin Kim and Zihao Fang wrote a Macaulay 2 package for computing associated forms while supported by the Boston College Undergraduate Research Fellowship grant under the direction of the author.  

\bibliographystyle{plain}
\bibliography{associated}{}
\end{document}